\def\english{\selectlanguage{english}}
\providecommand\mathbb{\bf}
\newcommand\R{{\mathbb R}}
\newcommand\N{{\mathbb N}}
\newcommand\Z{{\mathbb Z}}
\newtheorem{thm}{Theorem}[section]
\newtheorem{lemma}{Lemma}[section]
\newtheorem{pro}{Proposition}[section]
\newtheorem{defi}{Definition}[section]
\newtheorem{remark}{Remark}[section]
\newcounter{Remark}
\renewcommand\theRemark{\arabic{Remark}}
\newcounter{steps}
\newenvironment{proof}[1][]{%
\par\medbreak\setcounter{steps}{0}
{\noindent\bfseries Proof#1. }} {\hfill\fbox{\ }\medbreak}
\newcounter{substeps}[steps]
\newcommand{\supp}[0]{
\mathrm{supp\;}}
\newcommand{\tes}[0]{
\theta (s)}
\newcommand{\teps}[0]{
\theta ^{\;\prime} (s)}
\newcommand{\tet}[0]{
\theta  ({t}/{\varepsilon} )}
\newcommand{\tept}[0]{
\theta ^ {\;\prime}  ( {t}/{\varepsilon} )}
\newcommand{\Xe}[0]{
X ^\varepsilon }
\newcommand{\Pe}[0]{
P ^\varepsilon }
\newcommand{\s}[0]{
t/\varepsilon}
\newcommand{\Xz}[0]{
X^0}
\newcommand{\Xo}[0]{
X^1}
\newcommand{\Pz}[0]{
P^0}
\newcommand{\Po}[0]{
P^1}
\newcommand{\Qz}[0]{
Q^0}
\newcommand{\intsxp}[1]{
\int _0 ^T \!\!\!\int_{\R^3}\!\int _{\R ^3} \!#1 \;\mathrm{d}p\mathrm{d}x\mathrm{d}s}
\newcommand{\intxp}[1]{
\int _{\R^3}\!\int _{\R ^3} \!\!#1 \;\mathrm{d}p\mathrm{d}x}
\newcommand{\intxq}[1]{
\int _{\R^3}\!\int _{\R ^3} \!\!#1 \;\mathrm{d}q\mathrm{d}x}
\newcommand{\lime}[0]{
\lim _{\varepsilon \searrow 0}}
\newcommand{\intp}[1]{
\int _{\R ^3} \!#1 \;\mathrm{d}p}
\newcommand{\intq}[1]{
\int _{\R ^3} \!#1 \;\mathrm{d}q}
\newcommand{\intx}[1]{
\int _{\R ^3} \!#1 \;\mathrm{d}x}
\newcommand{\eps}[0]{
\varepsilon}
\newcommand{\fe}[0]{
f ^\varepsilon}
\newcommand{\Fe}[0]{
F ^\varepsilon}
\newcommand{\fz}[0]{
f ^0}
\newcommand{\fo}[0]{
f ^1}
\newcommand{\Rin}[0]{
R ^{\mathrm{in}}}
\newcommand{\fin}[0]{
f ^{\mathrm{in}}}
\newcommand{\gin}[0]{
g ^{\mathrm{in}}}
\newcommand{\gz}[0]{
g ^0}
\newcommand{\Be}[0]{
B ^\varepsilon}
\newcommand{\Ee}[0]{
E ^ \varepsilon}
\newcommand{\Ae}[0]{
A ^ \varepsilon}
\newcommand{\Div}[0]{
\mathrm{div}_x}
\newcommand{\Divy}[0]{
\mathrm{div}_y}
\newcommand{\Curl}[0]{
\mathrm{curl}_x}
\newcommand{\ran}[0]{
\mathrm{Range\;}}
\newcommand{\ltpltxp}[0]{
L^2_{\#}(\R _s;L^2(\R^3 _x \times \R ^3_p))}
\newcommand{\lipltxp}[0]{
L^\infty _{\#}( \R_s ; L^2(\R ^3 _x  \times \R ^3 _p))}
\newcommand{\ltxp}[0]{
L^2(\R  ^3 \times \R ^3)}
\newcommand{\oc}[0]{
\omega _c}
\newcommand{\ave}[1]{
\left \langle #1 \right \rangle }
\newcommand{\pb}[0]{
(p \cdot b(x) )}
\newcommand{\pwb}[0]{
|p \wedge b(x)|}
\newcommand{\loloctlinfx}[0]{
L^1 _{\mathrm{loc}}(\R_+; L ^\infty (\R^3))}
\newcommand{\linftltxp}[0]{
L^\infty(\R_+; L^2 (\R ^3 _x \times \R ^3 _p))}
\newcommand{\linf}[0]{
L^\infty (\R^3)}
\newcommand{\lty}[0]{
L^2( \R ^m)}
\newcommand{\inty}[1]{
\int _{\R ^m} \!\!\!\!#1 \;\mathrm{d}y}
\newcommand{\limT}[0]{
\lim _{T \to  +\infty }}
\newcommand{\limn}[0]{
\lim _{n \to  +\infty }}
\newcommand{\liy}[0]{
L^\infty( \R ^m)}
\baselinestretch\renewcommand{\baselinestretch}{1.5}
\begin{document}
\english

\title{Transport of charged particles under fast oscillating magnetic fields}

\author{
Mihai Bostan
\thanks{Laboratoire de
Math\'ematiques de Besan{\c c}on, UMR CNRS 6623, Universit\'e de
Franche-Comt\'e, 16 route de Gray, 25030 Besan{\c c}on  Cedex
France. E-mail : {\tt mbostan@univ-fcomte.fr}} 
}

\date{ (\today)}

\maketitle

\begin{abstract}
The energy production through thermo-nuclear fusion requires the confinement of the plasma into a bounded domain. In most of the cases, such configurations are obtained by using strong magnetic fields. Several models exist for describing the evolution of a strongly magnetized plasma, {\it i.e.,} guiding-center approximation, finite Larmor radius regime, etc. The topic of this paper concerns a different approach leading to plasma confinement. More exactly we are interested in mathematical models with fast oscillating magnetic fields. We provide rigorous derivations for this kind of models and analyze their properties. 
\end{abstract}

\paragraph{Keywords:}
Vlasov equation, Average operator, Multi-scale analysis.

\paragraph{AMS classification:} 35Q75, 78A35, 82D10.

\section{Introduction}
\label{Intro}
\indent

Motivated by the energy production through thermo-nuclear fusion, many research programs concern plasma confinement models. It is well known that good confinement properties are obtained under strong magnetic fields $\Be = {\cal O } ( 1/\eps)$ with $\eps >0$ a small parameter. Using the kinetic description and neglecting the particle collisions lead to the Vlasov equation
\begin{equation}
\label{Equ1} \partial _t \fe + \frac{p}{m} \cdot \nabla _x \fe + e \left ( E^\eps (t,x) + \frac{p}{m} \wedge B^\eps (t,x) \right ) \cdot \nabla _p \fe = 0,\;\;(t,x,p) \in \R_+ \times \R ^3 \times \R ^3
\end{equation}
with the initial condition
\begin{equation}
\label{EquIC} 
\fe (0,x,p) = \fin (x,p),\;\;(x,p) \in \R ^3 \times \R ^3.
\end{equation}
Here $\fe = \fe (t,x,p ) \geq 0$ is the distribution function of the particles in the position-momentum phase space $(x,p) \in \R ^3 \times \R ^3$, $m$ is the particle mass, $e$ is the particle charge and $(E ^\eps, B^\eps)$ stands for the electro-magnetic field.

Standard configurations ensuring confinement are those obtained by applying strong magnetic fields. For example, assuming that the electric field derives from a given potential $E = - \nabla _x \phi$ and the magnetic field is stationary, divergence free 
$$
\Be (x) = \frac{B(x)}{\eps}b(x),\;\;\Div (Bb) = 0,\;\;0 < \eps <<1
$$
for some scalar positive function $B(x)$ and some field of unitary vectors $b(x)$, lead to the Vlasov equation 
\begin{equation}
\label{EquNew1}
\partial _t \fe + \frac{p}{m} \cdot \nabla _x \fe + \left ( e E (t,x) + \frac{\oc (x)}{\eps} p \wedge b (x) \right ) \cdot \nabla _p \fe  = 0,\;\;\oc (x) = \frac{eB(x)}{m}
\end{equation}
whose limit as $\eps \searrow 0$ is known as the guiding-center approximation. The particles rotate around the magnetic lines and the radius of this circular motion, which is called the Larmor radius $\rho _L$, is proportional to the inverse of the magnetic field. Therefore when the magnetic field is strong, the typical Larmor radius vanishes and the particles remain confined along the magnetic lines. But the frequency of these rotations, which is called the cyclotronic frequency, is proportional with the magnetic field. Consequently, high magnetic fields introduce small time scales, since the cyclotronic period is much smaller than the observation time. Clearly, the transport equation \eqref{EquNew1} involves multiple scales: fast motion around the magnetic lines driven by the Laplace force in $\frac{\oc (x)}{\eps} (p \wedge b ) \cdot \nabla _p$ and slow motion corresponding to the advection $\frac{p}{m} \cdot \nabla _x + eE \cdot \nabla _p$. 

From the numerical point of view, the efficient resolution of \eqref{EquNew1} requires multiple scale analysis \cite{BogMit61} or homogenization techniques. It is also possible to appeal to Lagrangian and Hamiltonian methods \cite{BriHam07}. For a unified treatment of the main physical ideas and theoretical methods that have emerged on magnetic plasma confinement we refer to \cite{HazMei03}.

The guiding-center approximation for
the Vlasov-Maxwell system was studied in \cite{BosSonPart1} by the
modulated energy method. The case of three dimensional general magnetic shapes \eqref{EquNew1} has been studied in \cite{BosGuidCent3D}, using a general method, based on ergodicity, introduced in \cite{BosTraSin}. It was proved in \cite{BosGuidCent3D} that the limit density $f = \lim _{\eps \searrow 0} \fe$ satisfies the Vlasov equation
\[
\partial _t f +  b(x) \otimes b(x) \frac{p}{m}  \cdot \nabla _x f + \left ( e b(x) \otimes b(x) E + \omega  (x,p) \;\tilde{p}
\right ) \cdot \nabla _p f = 0
\]
where for any $(x,p)$ with $p \wedge b(x) \neq 0 $ the symbol
$\tilde{p} $ stands for the orthogonal momentum to $p$,
contained in the plane determined by $b(x)$ and $p$, and such that its coordinate along $b(x)$ is positive, that means
\[
\tilde{p} = \pwb{} \;b(x) - \pb{} \;\frac{b(x) \wedge ( p \wedge b(x))}{\pwb{}}
\]
and the frequency $\omega (x,p)$ is given by
$$
\omega  (x,p) = \frac{\pwb{}}{2m} \;\Div b - \frac{\pb{}}{m} \;\left ( \frac{\partial b }{\partial x } b(x) \cdot \frac{p}{\pwb{}} \right ),\;\;p \wedge b(x) \neq 0.
$$
The analysis of the Vlasov or
Vlasov-Poisson equations with large external magnetic field have
been carried out in \cite{FreSon98}, \cite{GolSai99}, \cite{Bre00},
\cite{FreSon01}, \cite{GolSai03}. The numerical approximation of the
gyrokinetic models has been performed in \cite{GraBruBer06} using
semi-Lagrangian schemes. Other methods are based on the water bag
representation of the distribution function 
\cite{MorGraBes08}.

Notice that configurations with large magnetic field amplitude require huge energy since the magnetic energy is quadratic with respect to $|\Be| = B/\eps$. 

We investigate here models with fast oscillating magnetic fields
\begin{equation}
\label{Equ3}
\Be (t,x) = \theta (t/\eps)B(x)b(x),\;\;0 < \eps <<1
\end{equation}
where $\theta = \theta (s)$ is a given $T$ periodic profile of class $C^1$. The magnetic energy dissipated in this case is much lower than for the guiding-center approximation and remains of order of $|B|^2$. Therefore such models will be more interesting for real life applications, provided they still have good confinement properties. At this stage we neglect the gradient and curvature effects of the magnetic field, assuming that
$$
\Be = \Be (t) = \tet{}(0,0,B)
$$
for some constant $B>0$. The general model including gradient and curvature effects will be discussed in Section \ref{3DSetting}. The vector potential corresponding to this magnetic field, {\it i.e.,} satisfying $\Be = \Curl \Ae, \Div \Ae = 0$ is given by
$$
\Ae (t,x) = - \frac{B}{2}\; \tet{} ^\perp x,\;\;^\perp x = (x_2, - x_1, 0).
$$
Decomposing the electric field into gradient and rotational parts
\[
E ^\eps = - \nabla _x \phi + \Curl \psi ^\eps
\]
we deduce, by Faraday's law $\partial _t \Be + \Curl E^\eps = 0$ that
\[
\Curl ( \partial _t \Ae + \Curl \psi ^\eps ) = 0,\;\;\Div ( \partial _t \Ae + \Curl \psi ^\eps ) = 0
\]
and therefore the electric field induced by the time fluctuations of the magnetic field is
$$
\Curl \psi ^\eps   = - \partial _t \Ae  = \frac{B}{2 \eps }\; \tept{} ^\perp x.
$$
The Vlasov equation becomes, with the notations $^\perp p = (p_2, - p_1, 0)$ and $E = - \nabla _x \phi$ 
\begin{equation}
\label{Equ4} \partial _t \fe + \frac{p}{m} \cdot \nabla _x \fe + \left (eE (t,x) + \frac{m \oc}{2 \eps } \tept{} ^\perp x + \oc \; \tet{} ^\perp  p\right ) \cdot \nabla _p \fe = 0.
\end{equation}
Here $E = - \nabla _x \phi $ is a given irrotational electric field or can be determined eventually by solving the Poisson equation 
\begin{equation}
\label{Equ2}
\Div E (t)= - \Delta _x \phi  (t) = \frac{e}{\eps _0} \left \{   \intp{\fe (t,x,p)} - n_0 (x) \right \},\;\;t \in \R_+, \;\;x \in \R ^3.
\end{equation}
The concentration $n_0(x)$ corresponds to a neutralizing background of charged particles of opposite sign and $\eps _0$ is the electric permittivity of the vacuum. 

Our paper is organized as follows. The main results are presented in Section \ref{PreMainResult}. In Section \ref{AveOpe} we introduce the mathematical tools that we need for our analysis. It mainly concerns the average operator with respect to characteristic flows. We discuss its main properties as range characterization, Poincar\'e and Sobolev inequalities. Some commutation results are established in Section \ref{CommAveDer}. Section \ref{LimMod} is devoted to the derivation of the limit model, which follows in a natural way by appealing to the average operator  introduced before. We establish the conservation of the total energy and justify the confinement properties for such a model. The asymptotic behaviour towards this limit model is analyzed in Section \ref{AsyBeh}. The general three dimensional setting is investigated in the last section.

\section{Presentation of the model and main results}
\label{PreMainResult}

The Vlasov equation \eqref{Equ4} reduces to the characteristic system
\begin{equation}
\label{Equ5} 
\frac{d\Xe}{dt} = \frac{\Pe (t)}{m},\;\; \frac{d\Pe}{dt} =   e E(t,\Xe(t)) + \frac{m \oc}{2\eps}  \tept{} \;^\perp \Xe (t) + \oc \tet{}\;^\perp \Pe (t).
\end{equation}
It is convenient to introduce the fast variable $s = \s$ and the standard ansatz
$$
\Xe (t) = \Xz (t,\s) + \eps \Xo (t,\s) + ...,\;\;\Pe (t) = \Pz (t,\s) + \eps \Po (t,\s) + ...
$$
(here all dependences with respect to the fast variable $s$ are supposed $T$ periodic, as the profile $\theta = \tes{}$) leading to
$$
\partial _t \Xz + \frac{1}{\eps} \partial _s \Xz + \eps ( \partial _t \Xo + \frac{1}{\eps} \partial _s \Xo ) + ... = \frac{\Pz}{m} + \eps \frac{\Po}{m} +...
$$
and
\begin{eqnarray}
\partial _t \Pz + \frac{1}{\eps} \partial _s \Pz + \eps ( \partial _t \Po + \frac{1}{\eps} \partial _s \Po ) + ... & =& e E(t,\Xz + \eps \Xo +...) \nonumber \\
& + & \frac{m \oc}{2\eps}  \tept{} \;^\perp (\Xz + \eps \Xo + ...) \nonumber \\
& +&  \oc  \tet\;^\perp (\Pz + \eps \Po + ...).\nonumber
\end{eqnarray}
At least formally one gets the equations
\begin{equation}
\label{Equ6} \partial _s \Xz = 0,\;\;\partial _s \Pz = \frac{m\oc}{2}\teps{} \;^\perp \Xz
\end{equation}
at the lowest order $\eps ^{-1}$ and 
\begin{equation}
\label{Equ7} \partial _t \Xz + \partial _s \Xo = \frac{\Pz}{m},\;\; \partial _t \Pz + \partial _s \Po = e E(t,\Xz) + \frac{m\oc}{2} \teps{}\;^\perp \Xo + \oc \tes{} \;^\perp  \Pz
\end{equation}
at the next order $\eps ^0$. It follows that the quantities 
$$
\Xz,\;\; \Qz = \Pz - \frac{m\oc}{2} \tes{} \;^\perp \Xz
$$
depend only on $t$. Therefore, in order to obtain the characteristic equations satisfied by the leading order terms $(\Xz, \Pz)$ we write the equations \eqref{Equ7} in terms of $(\Xz, \Qz)$ and eliminate $(\Xo, \Po)$ by averaging with respect to the fast variable $s$ over one period. The first equation in \eqref{Equ7} becomes
$$
\partial _t \Xz + \partial _s \Xo = \frac{\Qz}{m} + \frac{\oc{}}{2} \tes{} {^\perp \Xz}
$$
and therefore averaging with respect to $s$ yields 
\begin{equation}
\label{Equ8}
\frac{d\Xz}{dt} = \frac{\Qz(t)}{m} + \frac{\oc{}}{2} \ave{\theta} \;^\perp \Xz (t),\;\;\ave{\theta} = \frac{1}{T}\int _0 ^T \theta (s)\;\mathrm{d}s.
\end{equation}
Similarly, the second equation in \eqref{Equ7} implies
\begin{eqnarray}
\partial _t \left ( \Qz + \frac{m \oc}{2} \tes{} \;^\perp \Xz   \right ) + \partial _s \Po & = & eE(t,\Xz) + \frac{m\oc}{2} \partial _s \{ \theta \;^\perp \Xo \} - \frac{m\oc}{2} \tes{} \partial _s \;^\perp \Xo \nonumber \\
& + & \oc \tes{} \;^\perp \{\Qz + \frac{m\oc}{2} \tes{} \;^\perp \Xz   \} \nonumber \\
& = & eE(t,\Xz) + \frac{m\oc}{2} \partial _s \{ \theta \;^\perp \Xo \} \nonumber \\
& - & \frac{m\oc}{2} \tes{} \;^\perp \left \{\frac{\Qz}{m} + \frac{\oc}{2} \tes{} \;^\perp \Xz - \partial _t \Xz \right \} \nonumber \\
& + & \oc \tes{} \;^\perp \Qz + \frac{m\oc ^2 }{2} \theta ^2 (s) \;^{\perp \perp}\Xz.\nonumber
\end{eqnarray}
Finally one gets
\begin{eqnarray}
\partial _t \Qz  + \partial _s \Po & = & e E(t,\Xz) + \frac{m\oc}{2} \partial _s \{ \theta \;^\perp \Xo\} + \frac{\oc}{2}\tes{}\;^\perp \Qz + \frac{m\oc ^2}{4} \theta ^2 (s) \;^{\perp \perp} \Xz \nonumber
\end{eqnarray}
and therefore, averaging with respect to $s$ yields
\begin{equation}
\label{Equ9}
\frac{d\Qz}{dt} =e E(t,\Xz(t)) + \frac{\oc}{2} \ave{\theta} \;^\perp \Qz (t) + \frac{m\oc ^2}{4} \ave{\theta ^2} \;^{\perp \;\perp} \Xz (t),\;\;\ave{\theta ^2} = \frac{1}{T}\int_0 ^T \theta ^2 (s) \;\mathrm{d}s.
\end{equation}
We associate to the characteristic equations \eqref{Equ8}, \eqref{Equ9} the transport equation
\begin{equation}
\label{Equ10}
\partial _t g^0 + \left ( \frac{q}{m} + \frac{\omega _c}{2} \ave{\theta} \;^\perp x \right ) \cdot \nabla _x g^0 + \left ( e E (t,x) + \frac{\omega _c}{2} \ave{\theta} \;^\perp q + \frac{m\oc ^2}{4}\ave{\theta ^2}  \;^{\perp \perp} x \right ) \cdot \nabla _q g^0 = 0.
\end{equation}
Since we have 
\[
( \Xe (t), \Pe (t)) \approx ( \Xz(t,\s), \Pz (t,\s)) = \left (\Xz (t), \Qz (t) + \frac{m\oc}{2}\tet{} \;^\perp \Xz (t) \right )
\]
and assuming that $\Xe (0) = x, \Pe (0) = p$ we can write
\[
f^\eps (t, \Xz (t,\s), \Pz (t,\s))  \approx  f ^ \eps (t, \Xe (t), \Pe (t)) = \fin (x,p)
\]
\[
g ^ 0 (t, \Xz(t,\s), \Pz(t,\s) - \frac{m\oc}{2} \tet{} \;^\perp \Xz (t,\s)) = g ^ 0  (t, \Xz (t), \Qz (t)) = \gin (x,q)
\]
and therefore we can expect that 
\[
\fe (t,x,p) \approx g ^ 0 (t,x,q = p - \frac{m\oc}{2} \tet{} \;^\perp x),\;\;\mbox{as } \eps \searrow 0
\]
provided that the initial conditions are well prepared. Introducing the density $\fz (t,s,x,p) = \gz (t, x, p - m \oc \tes{} /2  \;^\perp x)$ in the phase space $(s,x,p)$ we deduce that $\fe (t,x,p) \approx \fz (t, \s, x, p)$ as $\eps \searrow 0$. 

\begin{thm}
\label{MainResultWeak} Assume that $E \in \loloctlinfx{}$, $\fin \in \ltxp{}$. For any $\eps >0$ let $\fe \in \linftltxp{}$ be a weak solution of \eqref{Equ4}. Then there is a sequence $(\eps _n)_n$ converging to zero such that $(f ^{\eps _n})_n$ two-scale converges towards a weak solution of
\begin{eqnarray}
\label{EquNew3} & & \partial _t \fz + \left (\frac{p}{m} - \frac{\oc}{2} ( \tes{} - \ave{\theta})\;^\perp x    \right ) \cdot \nabla _x \fz   \\
& + & \left (eE(t,x) + \frac{\oc}{2} ( \tes{} + \ave{\theta}) \;^\perp p + \frac{m\oc ^2}{4} ( \ave{\theta ^2} - \theta ^2 (s)) \;^{\perp \perp} x    \right ) \cdot \nabla _p \fz = 0\nonumber
\end{eqnarray}
\begin{equation}
\label{EquNew4} \fz(0, s, x, p) = \fin \left (x, p - \frac{m\oc}{2} (\tes{} - \theta (0))\;^ \perp x     \right )\in \ker {\cal T}.
\end{equation}
\end{thm}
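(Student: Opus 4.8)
The plan is to pass to the limit by two-scale convergence in the fast time variable $s=\s$, reading off the limit model from the averaging operator of Section~\ref{AveOpe}, guided by the ansatz $\fe(t,x,p)\approx\fz(t,\s,x,p)$ with $\fz(t,s,x,p)=\gz(t,x,p-\frac{m\oc}{2}\tes\,^\perp x)$. First I would record the a priori bound. The field driving \eqref{Equ4} is divergence free in $(x,p)$: $\Div(p/m)=0$ and the $p$--divergence of $eE(t,x)+\frac{m\oc}{2\eps}\tept\,^\perp x+\oc\tet\,^\perp p$ vanishes, since neither $E$ nor $^\perp x$ depends on $p$ and $\mathrm{div}_p(^\perp p)=0$. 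Hence weak solutions conserve the $L^2$ norm, $\|\fe(t)\|_{\ltxp}=\|\fin\|_{\ltxp}$ for all $t$, so $(\fe)_\eps$ is bounded in $\linftltxp$ uniformly in $\eps$. By the two-scale compactness theorem there are a sequence $\eps_n\searrow0$ and a profile $\fz=\fz(t,s,x,p)$, $T$ periodic in $s$, with $f^{\eps_n}$ two-scale converging to $\fz$, i.e. $\int_0^\infty\!\intxp{f^{\eps_n}\varphi(t,t/\eps_n,x,p)}\,\mathrm{d}t\to\frac1T\int_0^\infty\!\!\int_0^T\!\intxp{\fz\varphi}\,\mathrm{d}s\,\mathrm{d}t$ for every smooth $\varphi(t,s,x,p)$, $T$ periodic in $s$ and compactly supported in $(t,x,p)$.

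Next I would identify the constraint, by inserting the test function $(t,x,p)\mapsto\eps\,\varphi(t,t/\eps,x,p)$ into the weak formulation of \eqref{Equ4}. The ${\cal O}(1)$ contributions come from $\partial_t\{\eps\varphi(t,t/\eps,\cdot)\}=\eps\,\partial_t\varphi+\partial_s\varphi$ (the $\partial_s\varphi$ part) and from the singular magnetic term, $\frac{m\oc}{2\eps}\tept\,^\perp x\cdot\nabla_p\{\eps\varphi\}=\frac{m\oc}{2}\tept\,^\perp x\cdot\nabla_p\varphi$, whereas every other term is ${\cal O}(\eps)$. Letting $n\to\infty$ gives $\int_0^\infty\!\!\int_0^T\!\intxp{\fz\,(\partial_s\varphi+\frac{m\oc}{2}\teps\,^\perp x\cdot\nabla_p\varphi)}\,\mathrm{d}s\,\mathrm{d}t=0$ for all $\varphi$, that is ${\cal T}\fz=0$ in ${\cal D}'$, where ${\cal T}=\partial_s+\frac{m\oc}{2}\teps\,^\perp x\cdot\nabla_p$ is the transport operator of the fast dynamics \eqref{Equ6}. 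By the description of $\ker{\cal T}$ in Section~\ref{AveOpe}, $\fz$ depends only on $t$ and on the invariants $x$, $q=p-\frac{m\oc}{2}\tes\,^\perp x$ of the flow of ${\cal T}$, i.e. $\fz(t,s,x,p)=\gz(t,x,p-\frac{m\oc}{2}\tes\,^\perp x)$ for some $\gz\in L^2$; in particular $\fz(0,\cdot)\in\ker{\cal T}$, as required by \eqref{EquNew4}.

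To obtain the limit equation, I would test \eqref{Equ4} against the well-prepared function $\varphi^\eps(t,x,p)=\psi(t,x,p-\frac{m\oc}{2}\tet\,^\perp x)$, $\psi$ smooth and compactly supported; this is admissible since $\theta\in C^1$ and the transformation is affine in $p$ and polynomial in $x$, and it is chosen so that ${\cal T}\varphi=0$ for $\varphi(t,s,x,p)=\psi(t,x,p-\frac{m\oc}{2}\tes\,^\perp x)$, whence the two singular terms cancel and only ${\cal O}(1)$ terms survive. Passing to the two-scale limit, then performing the change of variables $q=p-\frac{m\oc}{2}\tes\,^\perp x$ ($p=q+\frac{m\oc}{2}\tes\,^\perp x$, $\nabla_p\varphi=\nabla_q\psi$, $\nabla_x\varphi=\nabla_x\psi+\frac{m\oc}{2}\tes\,^\perp\nabla_q\psi$), and averaging in $s$ over one period (which replaces $\tes$ and $\theta^2(s)$ by $\ave{\theta}$ and $\ave{\theta^2}$), one recovers precisely the weak formulation of \eqref{Equ10} for $\gz$. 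The initial term $\intxp{\fin(x,p)\,\psi(0,x,p-\frac{m\oc}{2}\theta(0)\,^\perp x)}$ passes to the limit unchanged and, under the same substitution, forces $\gz(0,x,q)=\fin(x,q+\frac{m\oc}{2}\theta(0)\,^\perp x)$, that is $\fz(0,s,x,p)=\fin(x,p-\frac{m\oc}{2}(\tes-\theta(0))\,^\perp x)$, which is \eqref{EquNew4}. Finally, the same change of variables applied to $\fz$ itself shows that on $\ker{\cal T}$ the operator in \eqref{EquNew3} coincides with the one in \eqref{Equ10}, so that $\fz$ is a weak solution of \eqref{EquNew3}.

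The main obstacle is not in any step taken in isolation --- the uniform bound, the two-scale extraction and the cancellation of the singular terms are routine --- but in two more delicate points. First, two-scale convergence must be applied to test functions depending on the fast time through $\theta(t/\eps)$; these are recast as $\Phi(t,t/\eps,x,p)$ with $\Phi$ continuous and $T$ periodic in $s$, and one must control the compact supports in $p$, which now drift with $s$, to remain in the admissible class. Second, if \eqref{EquNew3} is to be tested against \emph{arbitrary} $T$ periodic functions $\varphi(t,s,x,p)$, not merely those in $\ker{\cal T}$, one splits $\varphi=\ave{\varphi}+(\varphi-\ave{\varphi})$ with $\varphi-\ave{\varphi}\in\ran{\cal T}$ (the range characterization of Section~\ref{AveOpe}) and uses the commutation identities of Section~\ref{CommAveDer} to show that the ${\cal T}$--exact part is annihilated by $\fz\in\ker{\cal T}$; this coupling between the slow advection and the averaging operator is where the real work lies.
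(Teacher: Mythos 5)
Your proposal is correct and follows essentially the same route as the paper: uniform $L^2$ bound, two-scale extraction, testing with $\eps\,\varphi(t,t/\eps,x,p)$ to obtain the constraint ${\cal T}\fz=0$, then testing with well-prepared oscillating test functions $\psi(t,x,p-\frac{m\oc}{2}\tet\,^\perp x)\in\ker{\cal T}$ so the singular terms cancel, and identifying the limit through the change of variables $q=p-\frac{m\oc}{2}\tes\,^\perp x$ and averaging in $s$ — which is exactly the content of the paper's Proposition \ref{P1} and Lemma \ref{L1}, the paper merely performing this computation in the $(s,x,p)$ variables rather than passing to $(x,q)$ and back. Your closing remark about extending the weak formulation from $\ker{\cal T}$ test functions to arbitrary periodic ones addresses a point the paper glosses over, and your splitting $\varphi=\ave{\varphi}+(\varphi-\ave{\varphi})$ is a workable (if not strictly necessary) way to handle it, since at fixed $s$ equation \eqref{EquNew3} is just \eqref{Equ25} conjugated by the change of variables.
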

Consequently we have      
 to study the confinement properties of the limit model \eqref{Equ10} (or \eqref{EquNew3}). Indeed, such models lead to confinement. For convincing ourselves let us consider a particular case, that of vanishing electric potential $\phi = 0$.  The characteristic system for $(\Xz, \Qz)$ becomes
\[
\frac{d\Xz}{dt} = \frac{\Qz (t)}{m} + \frac{\oc}{2}\ave{\theta} \;^\perp \Xz (t),\;\;\frac{d\Qz}{dt} = \frac{\oc}{2} \ave{\theta} \;^\perp \Qz (t) + \frac{m\oc ^2}{4} \ave{\theta ^2} \;^{\perp \perp} \Xz (t)
\]
implying that 
\begin{equation}
\label{EquNew6}
\frac{d^2\Xz}{d t^2} - \frac{\oc ^2}{4} ( \ave{\theta ^2} - \ave{\theta} ^2) \;^{\perp \perp }\Xz (t) = \oc \ave{\theta} \frac{d \;^\perp \Xz}{dt}.
\end{equation}
Multiplying by $\displaystyle \frac{d\Xz}{dt}$ we obtain the conservation
$$
\frac{d}{dt} \left \{ \frac{1}{2}\left |\frac{d\Xz}{dt}    \right |^2 + \frac{\oc ^2}{4} (\ave{\theta ^2} - \ave{\theta}^2) \frac{1}{2}|^{\perp \perp}\Xz |^2 \right \} = 0.
$$
If $\theta$ is not a constant profile ({\it i.e.,} the magnetic field oscillates in time), then 
\[
\frac{\oc ^2}{4} ( \ave{\theta ^2} - \ave{\theta} ^2) = \frac{\oc ^2}{4} \ave{(\theta - \ave{\theta})^2}>0
\]
and clearly the projection of $\Xz (t)$ on the orthogonal directions with respect to the magnetic field oscillates around the magnetic lines. The oscillation frequencies can be computed explicitely in this particular case. Observe that the components $(X_1, X_2)$ satisfy
\[
X_j ^{(4)} + \frac{\oc ^2}{2} ( \ave{\theta ^2} + \ave{\theta} ^2 ) X_j ^{(2)} + \frac{\oc ^4 }{16} ( \ave{\theta ^2} - \ave{\theta}^2 ) ^2 X_j = 0.
\]
The roots of the characteristic polynomial are purely imaginary 
\[
\pm i \; \frac{\oc}{2} ( \sqrt{\ave{\theta ^2}} \pm \ave{\theta} )
\]
and therefore the oscillation frequencies in the plane $(x_1, x_2)$ are $\frac{\oc}{2} ( \sqrt{\ave{\theta ^2}} \pm \ave{\theta} )$. The plasma remains confined along the magnetic lines. Generally we establish the following result
\begin{thm}
\label{MainResult2} Assume that $\lambda \in C^1 (\R)$ is nonincreasing, nonnegative and vanishes on $[L,+\infty[$, for some $L >0$. Let the initial condition $\fin $ satisfy
\[
\fin (x,p) \leq \lambda ( \chi (x, p - m \oc \theta (0) /2\;^\perp x) + e \phi (0,x))
\]
where 
\[
\chi (x,q) =  \frac{1}{2m} \left | q + \frac{m \oc}{2} \ave{\theta} \;^\perp x  \right | ^2 + \frac{m \oc ^2}{4} ( \ave{\theta ^2} - \ave{\theta} ^2 ) \frac{|^\perp x|^2}{2}.
\]
If the electric potential $\phi \in C^1(\R_+ \times \R^3)$ satisfies
\begin{equation}
\label{EquNew5}
\lim _{|^\perp x| \to +\infty} \left \{e \phi (t, x) - \int _0 ^t \sup _{y \in \R^3} \{ e \partial _t \phi (s,y) \}\;\mathrm{d}s + \frac{m\oc ^2}{4} ( \ave{\theta ^2} - \ave{\theta} ^2 ) \frac{|^\perp x|^2}{2}    \right \} = + \infty
\end{equation}
uniformly with respect to $t \in \R_+, x_3 \in \R$, then there is a constant $R >0$ such that for any $t \in \R_+, s \in \R$ the solution of the problem \eqref{EquNew3}, \eqref{EquNew4} verifies
\[
\supp \fz (t, s, \cdot, \cdot) \subset \{ (x,p) \;:\;| ^\perp x | \leq R\}.
\]
\end{thm}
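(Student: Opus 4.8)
The plan is to run a characteristic/energy estimate for the linear transport equation \eqref{EquNew3}, the conserved quantity being the one already isolated in \eqref{EquNew6} (with $\phi=0$). For a frozen value of the fast variable $s$, the characteristic system of \eqref{EquNew3} reads
\[
\frac{dX}{dt} = \frac{P}{m} - \frac{\oc}{2}(\tes{} - \ave{\theta})\;^\perp X,
\]
\[
\frac{dP}{dt} = eE(t,X) + \frac{\oc}{2}(\tes{} + \ave{\theta})\;^\perp P + \frac{m\oc ^2}{4}(\ave{\theta ^2} - \theta ^2(s))\;^{\perp \perp}X ,
\]
and I would first perform the substitution $Q = P - \frac{m\oc}{2}\tes{}\;^\perp X$. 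A direct computation — the same manipulation as the formal one carried out in Section \ref{PreMainResult} when passing from \eqref{Equ7} to \eqref{Equ8}, \eqref{Equ9} — shows that $(X,Q)$ solves the $s$-independent system $\dot X = Q/m + \frac{\oc}{2}\ave{\theta}\;^\perp X$, $\dot Q = eE(t,X) + \frac{\oc}{2}\ave{\theta}\;^\perp Q + \frac{m\oc ^2}{4}\ave{\theta ^2}\;^{\perp \perp}X$. Since $\fz$ is constant along the characteristics of \eqref{EquNew3}, it is enough to bound $|^\perp X(t)|$ on every characteristic that carries a nonzero value of $\fz$.

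Next I would differentiate the energy $\chi(X(t),Q(t)) + e\phi(t,X(t))$ along the flow. Writing $u = Q + \frac{m\oc}{2}\ave{\theta}\;^\perp X$ one finds $\dot X = u/m$ and $\dot u = eE(t,X) + \oc\ave{\theta}\;^\perp u + \frac{m\oc ^2}{4}(\ave{\theta ^2} - \ave{\theta}^2)\;^{\perp \perp}X$, while $\chi(X,Q) = \frac{|u|^2}{2m} + \frac{m\oc ^2}{4}(\ave{\theta ^2} - \ave{\theta}^2)\frac{|^\perp X|^2}{2}$. Because $u\cdot{}^\perp u = 0$ the magnetic contributions cancel and only the electric work survives, so that $\frac{d}{dt}\chi(X,Q) = \frac{e}{m}\,u\cdot E(t,X) = e\,\dot X\cdot(-\nabla _x\phi)(t,X)$, whence
\[
\frac{d}{dt}\left\{\chi(X,Q) + e\phi(t,X)\right\} = e\,\partial _t\phi(t,X) \le \sup _{y\in\R^3}\{e\,\partial _t\phi(t,y)\}.
\]
Integrating in time gives, for every $t\ge 0$,
\[
\chi(X(t),Q(t)) + e\phi(t,X(t)) \le \chi(X(0),Q(0)) + e\phi(0,X(0)) + \int _0 ^t \sup _{y}\{e\,\partial _t\phi(\sigma,y)\}\,\mathrm{d}\sigma .
\]

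It then remains to feed in the hypotheses. Let $(t,s,x,p)$ lie in $\supp\fz$ and let $(X(\cdot),P(\cdot))$ be the characteristic with $(X(t),P(t)) = (x,p)$. By transport invariance and the initial condition \eqref{EquNew4}, $\fin$ is nonzero at the point $(X(0),\,P(0) - \frac{m\oc}{2}(\theta(s) - \theta(0))\,^\perp X(0))$; subtracting $\frac{m\oc}{2}\theta(0)\,^\perp X(0)$ from the momentum slot produces exactly $Q(0) = P(0) - \frac{m\oc}{2}\theta(s)\,^\perp X(0)$, which is the momentum argument occurring in the assumed bound on $\fin$, so that assumption together with the nonnegativity of $\lambda$ and its vanishing on $[L,+\infty[$ forces $\chi(X(0),Q(0)) + e\phi(0,X(0)) < L$. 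Plugging this into the integral inequality, using $\chi(x,q) \ge \frac{m\oc ^2}{4}(\ave{\theta ^2} - \ave{\theta}^2)\frac{|^\perp x|^2}{2}$ with $\ave{\theta ^2} - \ave{\theta}^2 = \ave{(\theta - \ave{\theta})^2} \ge 0$, and evaluating at time $t$ (where $^\perp X(t) = {}^\perp x$), I obtain
\[
e\phi(t,x) - \int _0 ^t \sup _{y}\{e\,\partial _t\phi(\sigma,y)\}\,\mathrm{d}\sigma + \frac{m\oc ^2}{4}(\ave{\theta ^2} - \ave{\theta}^2)\frac{|^\perp x|^2}{2} < L .
\]
Since by \eqref{EquNew5} the left-hand side tends to $+\infty$ as $|^\perp x| \to \infty$, uniformly in $t\in\R_+$ and $x_3\in\R$, there is $R > 0$, independent of $t$ and $s$, such that this inequality can hold only when $|^\perp x| \le R$; this is the asserted confinement of $\supp\fz(t,s,\cdot,\cdot)$.

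The algebraic parts — the $Q$-substitution that linearises the characteristic flow, and the differentiation of $\chi$ — are routine. The step I expect to demand the most care is the rigorous justification that the (possibly weak) solution of \eqref{EquNew3}, \eqref{EquNew4} is genuinely constant along globally defined characteristics: under the standing assumption $\phi \in C^1(\R_+\times\R^3)$ the vector field in \eqref{EquNew3} is continuous, of class $C^1$ in $(x,p)$ and of at most linear growth in $(x,p)$, so the characteristics exist and are unique on all of $\R_+$ and the transport representation holds; that is the point I would write out in detail.
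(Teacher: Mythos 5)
Your argument is correct and is in essence the paper's own proof: the identity $\frac{d}{dt}\{\chi(X,Q)+e\phi(t,X)\}=e\,\partial_t\phi(t,X)$ along the $s$-independent $(X,Q)$-flow is exactly the paper's \eqref{EquTimDepH}, and the bound by $\int_0^t\sup_y\{e\,\partial_t\phi\}$ combined with the hypothesis \eqref{EquNew5} is used in the same way. The only difference is presentational: the paper works at the PDE level in the $(x,q)$ variables, turning the same estimate into the explicit super-solution $h(t,x,q)=\lambda\bigl(\chi(x,q)+e\phi(t,x)-\int_0^t\sup_y\{e\,\partial_t\phi(\sigma,y)\}\,\mathrm{d}\sigma\bigr)$ for \eqref{Equ25} and invoking the maximum principle, whereas you phrase it trajectory-wise via constancy along characteristics (with the same implicit regularity caveats the paper also leaves informal).
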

We also prove a strong convergence result
\begin{thm}
\label{MainResult1} Assume that $E \in L^1_{\mathrm{loc}}(\R_+; W^{2,\infty}(\R^3))$, $\partial _t E \in \loloctlinfx{}$, the initial condition $\fin$ has compact support and belongs to $ W^{2,\infty}(\R^3 \times \R^3)$. Let $\fz (t,s, x,p)$ be the solution of \eqref{EquNew3}, \eqref{EquNew4} and $(\fe)_\eps$ the solutions of the problems \eqref{Equ4}, \eqref{EquIC}. Then for any interval $[0,I] \subset \R_+$ there is a constant $C(I)$ such that 
\[
\|\fe(t,\cdot, \cdot) - \fz(t,\s, \cdot, \cdot ) \|_{\ltxp{}} \leq C(I) \;\eps,\;\;t \in [0,I], \;\eps >0.
\]
\end{thm}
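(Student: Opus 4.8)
The strategy is to compare $\fe$ with a two-scale approximation $\fe_{\mathrm{app}}(t,x,p)=\fz(t,\s,x,p)+\eps\, f^1(t,\s,x,p)$ that solves \eqref{Equ4} up to a residual of size $\mathcal{O}(\eps)$ in $L^1_t(\ltxp{})$, and then to invoke the $L^2$ stability of \eqref{Equ4}. The latter is immediate: the velocity field in \eqref{Equ4} has zero divergence in $(x,p)$, hence any solution $u$ of $\partial_t u+\frac pm\cdot\nabla_x u+(eE+\frac{m\oc}{2\eps}\tept{}\,{}^\perp x+\oc\tet{}\,{}^\perp p)\cdot\nabla_p u=S$ satisfies $\frac{d}{dt}\|u(t,\cdot,\cdot)\|_{\ltxp{}}\le\|S(t,\cdot,\cdot)\|_{\ltxp{}}$; there is no exponential amplification, so it suffices to control the residual and the initial mismatch. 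It is convenient to write the fast operator $\mathcal{T}g=\partial_s g+\frac{m\oc}{2}\teps{}\,{}^\perp x\cdot\nabla_p g$ and the slow operator $\mathcal{L}g=\partial_t g+\frac pm\cdot\nabla_x g+(eE(t,x)+\oc\tes{}\,{}^\perp p)\cdot\nabla_p g$, so that the transport operator of \eqref{Equ4}, acting on functions $h(t,\s,x,p)$, reads $\frac1\eps\mathcal{T}h+\mathcal{L}h$.

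First I would record the well-posedness and regularity facts. For fixed $\eps>0$ the field in \eqref{Equ4} is Lipschitz in $(x,p)$ and locally integrable in $t$, so $\fe$ is transported along \eqref{Equ5} and coincides with the weak solution of Theorem \ref{MainResultWeak}; writing the characteristics in the unknown $Q^\eps:=\Pe-\frac{m\oc}{2}\tet{}\,{}^\perp\Xe$ eliminates the singular $1/\eps$ term and leaves a system with coefficients bounded on $[0,I]$ uniformly in $\eps$, so that $\fe(t,\cdot,\cdot)$ keeps compact support and the $L^2$ manipulations are legitimate. For the limit we use $\fz(t,s,x,p)=\gz(t,x,p-\frac{m\oc}{2}\tes{}\,{}^\perp x)$, where $\gz$ solves \eqref{Equ10} with $\gz(0,x,q)=\fin(x,q+\frac{m\oc}{2}\theta(0)\,{}^\perp x)$; the hypotheses $E\in L^1_{\mathrm{loc}}(\R_+;W^{2,\infty}(\R^3))$, $\partial_t E\in\loloctlinfx{}$ and $\fin\in W^{2,\infty}(\R^3\times\R^3)$ compactly supported yield, by the method of characteristics, $\gz\in L^\infty([0,I];W^{2,\infty})$ and $\partial_t\gz\in L^\infty([0,I];W^{1,\infty})$, compactly supported, hence the same for $\fz$ on $[0,I]\times\R_s$, together with $\fz\in\ker\mathcal{T}$.

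The core step is the construction of the corrector $f^1$. Since $\fz\in\ker\mathcal{T}$, applying the transport operator of \eqref{Equ4} to $\fz(t,\s,x,p)$ gives $\frac1\eps\mathcal{T}\fz+\mathcal{L}\fz=\mathcal{L}\fz$, and the density counterpart of the computation leading to \eqref{Equ8}--\eqref{Equ10} shows that $\ave{\mathcal{L}\fz}=0$; by the range characterization of the average operator (Section \ref{AveOpe}) one can therefore solve $\mathcal{T}f^1=-\mathcal{L}\fz$ with $f^1$ that is $T$ periodic in $s$, and by the commutation properties of $\mathcal{T}^{-1}$ with the $t,x,p$ derivatives (Section \ref{CommAveDer}) the corrector satisfies $f^1\in L^\infty([0,I]\times\R_s;W^{1,\infty})$, $\partial_t f^1\in L^\infty$, with compact support uniformly in $s$. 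This is where the extra smoothness is used: $f^1$ involves one $(x,p)$-derivative and one $t$-derivative of $\fz$, i.e. two derivatives of $\gz$, which is exactly what the $W^{2,\infty}$ assumptions on $\fin$ and $E$ and the $L^\infty$ bound on $\partial_t E$ provide. With $\fe_{\mathrm{app}}=\fz(t,\s,x,p)+\eps f^1(t,\s,x,p)$ the residual becomes $R^\eps=\frac1\eps\mathcal{T}\fz+(\mathcal{L}\fz+\mathcal{T}f^1)+\eps\,\mathcal{L}f^1=\eps\,(\mathcal{L}f^1)(t,\s,x,p)$, and since $f^1$ and its first derivatives are bounded in $\ltxp{}$ on the fixed compact support uniformly in $s$ and $\int_0^I\|E(t)\|_{\linf{}}\,dt<\infty$, one gets $\int_0^I\|R^\eps(t,\cdot,\cdot)\|_{\ltxp{}}\,dt\le C(I)\,\eps$.

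It then remains to conclude. The difference $u:=\fe-\fe_{\mathrm{app}}$ solves the transport equation \eqref{Equ4} with source $-R^\eps$, and by \eqref{EquNew4} evaluated at $s=0$ the leading terms agree exactly, so $u(0,\cdot,\cdot)=-\eps\, f^1(0,0,\cdot,\cdot)$. The $L^2$ stability recalled above gives, for $t\in[0,I]$, $\|u(t,\cdot,\cdot)\|_{\ltxp{}}\le\eps\|f^1(0,0,\cdot,\cdot)\|_{\ltxp{}}+\int_0^t\|R^\eps(\tau,\cdot,\cdot)\|_{\ltxp{}}\,d\tau\le C(I)\,\eps$, and finally $\|\fe(t,\cdot,\cdot)-\fz(t,\s,\cdot,\cdot)\|_{\ltxp{}}\le\|u(t,\cdot,\cdot)\|_{\ltxp{}}+\eps\|f^1(t,\s,\cdot,\cdot)\|_{\ltxp{}}\le C(I)\,\eps$, which is the claim. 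The main obstacle is the corrector step: producing $f^1$ with the right regularity and uniform-in-$s$ compact support (this is what forces the $W^{2,\infty}$ hypotheses and the $L^\infty$ control of $\partial_t E$), and checking carefully that $\ave{\mathcal{L}\fz}=0$ so that $\mathcal{T}f^1=-\mathcal{L}\fz$ is indeed solvable; once $f^1$ is available, the rest is a Gronwall-free $L^2$ estimate.
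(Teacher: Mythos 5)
Your argument is correct in outline, but it follows a genuinely different route from the paper. The paper never forms the corrected ansatz $\fz + \eps \fo$ evaluated at $s=\s$ directly; instead it introduces the auxiliary solution $\Fe(t,s,x,p)$ of the extended-space problem \eqref{Equ40}--\eqref{Equ41}, proves the error bound $\|\Fe(t)-\fz(t)\|\leq C_1(I)\eps$ in the $L^2_{\#}(\R_s;\ltxp{})$ norm (Proposition \ref{P4}, which is where the corrector $\fo$ with ${\cal T}\fo=-{\cal L}\fz$ appears, exactly as in your construction), and then, to pass from an $L^2$-in-$s$ estimate to the pointwise evaluation $s=\s$, invokes the Sobolev-type inequality of Proposition \ref{P3}; this forces a second, rather heavy step: uniform-in-$\eps$ bounds on $\nabla_{(t,x,p)}\Fe$ obtained by differentiating \eqref{Equ40} along the commuting fields $c^i$ of Proposition \ref{FamInv} and applying Gronwall, together with the uniform compact support of $\Fe$ (Proposition \ref{UnifCompSuppF}), in order to get $\|{\cal T}\Fe(t)\|\leq C_2(I)\eps$. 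Your approach trades all of that for elementary $L^2$ stability of \eqref{Equ4} (divergence-free field, Duhamel), pushing the entire regularity burden onto the \emph{known} corrector $\fo$ rather than onto the unknown $\Fe$; this is arguably simpler and is a legitimate alternative, and your identification of the initial mismatch as $-\eps\fo(0,0,\cdot,\cdot)$ via \eqref{EquNew4} at $s=0$, of the residual as $\eps({\cal L}\fo)(t,\s,x,p)$, and of the solvability condition $\ave{{\cal L}\fz}=0$ (which is exactly \eqref{Equ24}) are all correct.

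The one step you should justify more carefully is the claim that $\fo\in L^\infty([0,I]\times\R_s;W^{1,\infty})$ with $\partial_t\fo$ bounded, "by the commutation properties of ${\cal T}^{-1}$". The results of Section \ref{CommAveDer} (Propositions \ref{TimDer} and \ref{RegH1}) only yield bounds in $L^2_{\#}(\R_s;\ltxp{})$ for $\nabla_{(t,x,p)}\fo$, which is not enough for your scheme: you evaluate $\fo$ and ${\cal L}\fo$ at the single value $s=\s$, so you need control uniform (or at least pointwise) in $s$, in $\ltxp{}$. This gap is fixable in at least two ways: either use the explicit representation of ${\cal T}^{-1}$ on zero-average functions as an integral of ${\cal L}\fz$ over one period along the explicit characteristics \eqref{Equ18} (which, since $\fz$ is $W^{2,\infty}$, compactly supported, and ${\cal L}\fz$ is Lipschitz in $(t,s,x,p)$, directly gives the $L^\infty$-in-$s$ bounds and the uniform compact support, cf.\ Remark \ref{CommSupp}); or apply Proposition \ref{P3} to $\fo$, to $c^i\cdot\nabla_{(s,x,p)}\fo$ and to $\partial_t\fo$, all of which have zero average and whose ${\cal T}$-images are controlled, to upgrade the $L^2_s$ bounds to $\lipltxp{}$ bounds. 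With that point made explicit, your proof is complete and yields the same $O(\eps)$ rate as the paper.
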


\section{Average operator}
\label{AveOpe}
\indent

The previous considerations clearly show that the limit of the Vlasov equation with fast oscillating magnetic field deals with multi-scale techniques and homogenization arguments. For the rigorous derivation of the limit model \eqref{Equ10} we appeal to a slightly different method, based on Hilbert expansion at the density level
\begin{equation}
\label{Equ14} \fe (t,x,p) = \fz(t,\s, x,p) + \eps \fo (t, \s, x, p) + ...
\end{equation}
In this section we assume that $E = - \nabla _x \phi$ is a given electric field. Plugging this ansatz into \eqref{Equ4} leads to 
\begin{eqnarray}
& & \partial _t \fz + \frac{1}{\eps} \partial _s \fz + \eps \left (\partial _t \fo + \frac{1}{\eps} \partial _s \fo 
 \right ) + ... + \frac{p}{m} \cdot ( \nabla _x \fz + \eps \nabla _x \fo + ...) \nonumber \\
 & + & \left ( eE (t,x) + \frac{m\oc}{2 \eps} \tept{}\;^\perp x + \oc \tet{}\;^\perp p \right ) \cdot ( \nabla _p  \fz + \eps \nabla _p \fo + ...) = 0 
\end{eqnarray}
and we obtain formally 
\begin{equation}
\label{Equ15} \partial _s \fz + \frac{m\oc}{2}\teps{} \;^\perp x \cdot \nabla _p \fz = 0
\end{equation}
at the lowest order $\eps ^{-1}$ and
\begin{equation}
\label{Equ16} \partial _t \fz + \frac{p}{m} \cdot \nabla _x \fz  + (eE (t,x) + \oc \tes{} \;^\perp p ) \cdot \nabla _p \fz + \partial _s \fo + \frac{m\oc}{2}\teps{} \;^\perp x \cdot \nabla _p \fo = 0
\end{equation}
at the next order $\eps ^0$. The following operator will play a crucial role in our analysis 
\begin{equation}
\label{Equ17}
{\cal T} u = \mathrm{div}_{(s,p)} \left \{u \left ( 1, \frac{m\oc}{2}\teps{} \;^\perp x\right ) \right \}
\end{equation}
with domain 
\[
D({\cal T}) = \{u   \in \ltpltxp{}:\mathrm{div}_{(s,p)} \left \{u \left ( 1, \frac{m\oc}{2}\teps{} \;^\perp x\right ) \right \} \in \ltpltxp{}\}
\]
with $L^2_{\#}(\R _s;X)$ the space of square integrable $T$ periodic functions $u : \R \to (X, \|\cdot \|_X)$, endowed with the norm 
\[
\left ( \int _0 ^T \|u(s) \|_X ^2 \;\mathrm{d}s \right ) ^{1/2}.
\]  
The notation $\|\cdot \|$ stands for the standard norm of $\ltpltxp{}$
$$
\|u \| = \left ( \intsxp{|u(s,x,p)|^2} \right ) ^{1/2}.
$$ 
We denote by $(S,X,P)= (S,X,P)(\tau;s,x,p)$ the characteristics of the first order differential operator $\partial _s + \frac{m \oc}{2} \teps{} \;^\perp x \cdot \nabla _p $
\begin{equation}
\label{Equ17Bis}
\frac{dS}{d\tau} = 1,\;\;\frac{dX}{d\tau} = 0,\;\;\frac{dP}{d\tau} = \frac{m\oc}{2}\theta ^{\;\prime}(S(\tau)) \;^\perp X(S(\tau))
\end{equation}
with the conditions 
\[
S(0;s,x,p) = s,\;\;X(0;s,x,p) = x,\;\;P(0;s,x,p) = p.
\]
It is easily seen that 
\begin{equation}
\label{Equ18} S(\tau;s,x,p) = s + \tau,\;\;X(\tau; s,x,p) = x,\;\;P(\tau;s,x,p) = p + \frac{m\oc}{2} ( \theta (s + \tau) - \theta (s)) \;^\perp x.
\end{equation}
Notice that $\{x, p - \frac{m \oc}{2} \tes{} \;^\perp x\}$ is a complete family of functional independent prime integrals of \eqref{Equ17Bis}. We introduce the average operator along the characteristic flow \eqref{Equ18} cf. \cite{BosTraSin}
\begin{eqnarray}
\label{EquBelong}
\ave{u} (s,x,p) & = &  \frac{1}{T}\int _0 ^T u (S(\tau;s,x,p), X(\tau;s,x,p), P(\tau;s,x,p)) \;\mathrm{d}\tau  \\
& = &  \frac{1}{T}\int _0 ^T u \left (s+ \tau, x, p + \frac{m\oc}{2} ( \theta (s + \tau) - \tes{}) \;^\perp x \right )\;\mathrm{d}\tau \nonumber \\
& = & \frac{1}{T} \int _0 ^T u \left ( \tau, x, p - \frac{m\oc}{2} \tes{} \;^\perp x + \frac{m \oc}{2} \theta (\tau) \;^\perp x    \right ) \;\mathrm{d}\tau\nonumber 
\end{eqnarray}
for any function $u \in \ltpltxp{}$. 
\begin{pro}
\label{P1} The average operator is linear continuous. It coincides with the orthogonal projection on the kernel of ${\cal T}$ {\it i.e.,} 
\[
\ave{u} \in \ker {\cal T}\;:\; \intsxp{(u - \ave{u}) \varphi } = 0,\;\;\forall \; \varphi \in \ker {\cal T}.
\]
\end{pro}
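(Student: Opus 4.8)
\medskip

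\noindent\textbf{Proof proposal.}
The plan is to base everything on two structural features of the characteristic flow \eqref{Equ18}. First, for each fixed $\tau$ the map $(s,x,p)\mapsto(S,X,P)(\tau;s,x,p)$ is an affine shear with unit Jacobian, hence it preserves the measure $\mathrm{d}s\,\mathrm{d}x\,\mathrm{d}p$. Second, since $\theta$ is $T$-periodic one has $(S,X,P)(T;s,x,p)=(s,x,p)$ modulo the $T$-periodicity in $s$, so the flow is $T$-periodic in $\tau$. As a convenient bookkeeping device I would also use the global change of variables $(s,x,p)\mapsto(s,x,q)$, $q=p-\tfrac{m\oc}{2}\tes{}\,{}^\perp x$, which is a diffeomorphism of unit Jacobian conjugating ${\cal T}$ to $\partial_s$; in these variables $\ker{\cal T}$ is exactly the set of functions not depending on $s$ (this is the content of ``$\{x,\,p-\tfrac{m\oc}{2}\tes{}\,{}^\perp x\}$ is a complete family of prime integrals''), and, by the third expression in \eqref{EquBelong}, the average $\ave{u}$ becomes the plain mean $\tfrac1T\int_0^T\tilde u(\tau,x,q)\,\mathrm d\tau$ in those variables.

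Linearity of $\ave{\cdot}$ is immediate from \eqref{EquBelong}. For continuity I would apply Jensen's inequality inside the $\tfrac1T\int_0^T(\cdot)\,\mathrm d\tau$ average and then, for each fixed $\tau$, change variables along the flow; measure invariance together with $T$-periodicity in $s$ unfolds the $\tau$-integral and yields $\|\ave{u}\|\le\|u\|$. The same two ingredients — or, equivalently, the obvious symmetry of the $s$-mean in the straightened variables — give the duality identity
\[
\intsxp{\ave{u}\,v}=\intsxp{u\,\ave{v}}=\intsxp{\ave{u}\,\ave{v}},\qquad u,v\in\ltpltxp{},
\]
which I shall use twice below.

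To show $\ave{u}\in\ker{\cal T}$, note that by the group property of the flow and its $T$-periodicity $\ave{u}$ is constant along the characteristics, hence lies in $\ker{\cal T}$ by the above characterization — equivalently, $\ave{u}$ written in the variables $(s,x,q)$ does not depend on $s$. If one prefers to argue directly in $L^2$, test against a $T$-periodic $C^1$ function $\psi$ compactly supported in $(x,p)$: by the definition of the distributional divergence (using that the field $(1,\tfrac{m\oc}{2}\teps{}\,{}^\perp x)$ is divergence free in $(s,p)$, so there are no boundary terms) and then the duality identity, $\langle{\cal T}\ave{u},\psi\rangle=-\intsxp{\ave{u}\,{\cal T}\psi}=-\intsxp{u\,\ave{{\cal T}\psi}}$; but ${\cal T}\psi$ is the $\tau$-derivative of $\psi$ along the flow, so $\ave{{\cal T}\psi}=0$ by $T$-periodicity of the flow. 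Since $0\in\ltpltxp{}$ this gives $\ave{u}\in D({\cal T})$ and ${\cal T}\ave{u}=0$.

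Finally, for $\varphi\in\ker{\cal T}$: such $\varphi$ is constant along the characteristics, hence $\ave{\varphi}=\varphi$, and therefore by the duality identity
\[
\intsxp{(u-\ave{u})\,\varphi}=\intsxp{u\,\varphi}-\intsxp{\ave{u}\,\varphi}=\intsxp{u\,\varphi}-\intsxp{u\,\ave{\varphi}}=0.
\]
Since $\ker{\cal T}$ is a closed subspace of $\ltpltxp{}$, the two properties $\ave{u}\in\ker{\cal T}$ and $u-\ave{u}\perp\ker{\cal T}$ identify $\ave{u}$ as the orthogonal projection of $u$ onto $\ker{\cal T}$, which is the assertion. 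The only point that is not pure bookkeeping is the kernel characterization, i.e.\ passing from the weak identity ${\cal T}\varphi=0$ to the a.e.\ invariance of $\varphi$ along \eqref{Equ18}; the straightening change of variables, being an $L^2$-isometry (unit Jacobian) that turns ${\cal T}$ into $\partial_s$, settles this at once, after which the remaining steps are just Fubini, Jensen, and the measure-preserving $T$-periodic structure of the flow.
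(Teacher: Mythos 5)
Your proof is correct and takes essentially the same route as the paper: continuity via Jensen's inequality and the measure-preserving flow, membership $\ave{u}\in\ker{\cal T}$ because the average depends only on the invariants $(x,\,q=p-\tfrac{m\oc}{2}\tes{}\,^\perp x)$, and orthogonality by a Fubini/change-of-variables computation in the straightened coordinates. Your repackaging of the last step as the self-adjointness identity $\intsxp{\ave{u}\,v}=\intsxp{u\,\ave{v}}$, and your explicit justification of the kernel characterization that the paper cites as well known, are mild reorganizations of the same underlying computation rather than a different argument.
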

\begin{proof}
For any function $u \in \ltpltxp{}$ we have
\[
|\ave{u}|^2 (s,x,p) \leq \frac{1}{T} \int _0 ^T |u|^2 \left (s+\tau, x, p + \frac{m \oc}{2} ( \theta (s + \tau) - \tes{}) \;^\perp x    \right ) \;\mathrm{d}\tau
\]
implying that 
\begin{eqnarray}
\intxp{|\ave{u}|^2 (s)} & \leq & \frac{1}{T} \int _0 ^T \!\!\!\intxp{|u|^2 \left (s+ \tau, x, p + \frac{m\oc}{2} ( \theta (s + \tau) - \tes{} ) \;^\perp x \right ) }\mathrm{d}\tau \nonumber \\
& = & \frac{1}{T} \int _0 ^T \!\!\!\intxp{|u|^2 (s+ \tau, x, p)}\mathrm{d}\tau \nonumber \\
& = & \frac{1}{T}\| u \|^2.
\end{eqnarray}
Therefore we have 
\[
\|\ave{u} \| \leq \|u \|,\;\;\forall \; u \in \ltpltxp{}
\]
saying that $\ave{\cdot} \in {\cal L}(\ltpltxp{}, \ltpltxp{})$. It is well known that the kernel of ${\cal T}$ is given by the functions invariant along the characteristics \eqref{Equ18}
\[
\ker {\cal T} = \left \{ u \in \ltpltxp{}\;:\; \exists \;v  \mbox{ such that } u(s,x,p) = v\left ( x, p - \frac{m \oc}{2} \tes{} \;^\perp x \right ) \right \}.
\]
Clearly $\ave{u}$ depends only on $x$ and $p - \frac{m \oc}{2} \tes{} \;^\perp x$, cf. \eqref{EquBelong}, and thus $\ave{u}$ belongs to $\ker {\cal T}$.
Pick a function $\varphi \in \ker {\cal T}$ {\it i.e.,}
$$
\exists \; \psi \;:\; \varphi (s,x,p) = \psi \left (x, p - \frac{m \oc}{2} \tes{} \;^\perp x \right ) 
$$
and let us compute $I = \intsxp{\;(u - \ave{u}) \varphi }$. Using the change of coordinates $q = p - \displaystyle \frac{m \oc}{2} \tes{} \;^\perp x$, for fixed $(s,x)$, one gets
\begin{eqnarray}
I & = &  \int_0 ^T \!\!\!\int _{\R ^3} \!\int _{\R ^3} \!(u - \ave{u})\left ( s, x, q + \frac{m \oc}{2} \tes{} \;^\perp x \right ) \psi (x,q) \;\mathrm{d}q\mathrm{d}x\mathrm{d}s \nonumber \\
& = & \int _{\R ^3} \!\int _{\R ^3} \! \psi (x,q) \left \{ \int _0 ^T u \left (s, x, q + \frac{m \oc}{2} \tes{} \;^\perp x   \right )\mathrm{d}s - T \ave{u}   \right \}\;\mathrm{d}q\mathrm{d}x \nonumber \\
& = & 0 \nonumber 
\end{eqnarray}
and therefore $\ave{u} = \mathrm{Proj}_{\ker {\cal T}} u$ for any $u \in \ltpltxp{}$. In particular $\ave{u} = u$ for any $u \in \ker {\cal T}$.
\end{proof}
We investigate now the solvability of the equation ${\cal T}u = v$. We have a simple characterization in terms of the kernel of the average operator. Notice that if $v = {\cal T}u \in \ran{} {\cal T}$ we have for any $\varphi \in \ker {\cal T}$
\[
\intsxp{(v - 0) \varphi } = \intsxp{{\cal T} u \;\varphi } = - \intsxp{u \;{\cal T}\varphi } = 0
\]
saying by Proposition \ref{P1} that $\ave{v} = 0$. Moreover we have
\begin{pro}
\label{P2}
The restriction of ${\cal T}$ to $\ker \ave{\cdot}$ is one to one map onto $\ker \ave{\cdot}$. Its inverse belongs to ${\cal L }(\ker \ave{\cdot}, \ker \ave{\cdot} )$ and we have the Poincar\'e inequality
\[
\|u \| \leq T \|{\cal T} u \|\;\;\mbox{for any } u \in D({\cal T}) \cap \ker \ave{\cdot}.
\]
\end{pro}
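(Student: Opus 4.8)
The plan is to solve the equation ${\cal T}u = v$ explicitly by integrating along the characteristic flow \eqref{Equ18}. The crucial observation is that the transport field $\left(1, \frac{m\oc}{2}\teps{}\;^\perp x\right)$ is divergence free with respect to $(s,p)$ (its $p$-component does not depend on $p$), so that on smooth functions ${\cal T}u = \partial _s u + \frac{m\oc}{2}\teps{}\;^\perp x \cdot \nabla _p u = \frac{d}{d\tau}\big|_{\tau = 0}\, u(S(\tau;\cdot), X(\tau;\cdot), P(\tau;\cdot))$, i.e. ${\cal T}$ is simply the derivative along the flow \eqref{Equ17Bis}. Moreover, since $\theta$ is $T$ periodic, \eqref{Equ18} shows that the flow is $T$ periodic: $(S,X,P)(T;s,x,p) = (s+T,x,p)$, which is identified with $(s,x,p)$ in $\ltpltxp{}$. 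Hence ${\cal T}u = v$ reduces, along each characteristic, to the scalar ODE $U^{\;\prime} = V$; this is solvable (with $T$ periodic $U$) if and only if $\int _0 ^T V(\tau)\,\mathrm{d}\tau = 0$, and by \eqref{EquBelong} this compatibility condition is exactly $\ave{v} = 0$, that is $v \in \ker \ave{\cdot}$.

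Concretely, for $v \in \ker \ave{\cdot}$ I would set
\[
u(s,x,p) = -\frac{1}{T}\int _0 ^T\!\!\int _0 ^\tau v\left(\sigma + s, x, p + \frac{m\oc}{2}(\theta(\sigma + s) - \tes{})\;^\perp x\right)\mathrm{d}\sigma\,\mathrm{d}\tau,
\]
equivalently $u = -\frac{1}{T}\int _0 ^T\int _0 ^\tau v\circ(S,X,P)(\sigma;\cdot)\,\mathrm{d}\sigma\,\mathrm{d}\tau$. Using the periodicity of $\theta$ and the group property of the flow, a direct computation shows that $u$ is $T$ periodic in $s$, that ${\cal T}u = v$, and that the particular choice of the free constant made above forces $\ave{u} = 0$. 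Thus the formula exhibits a right inverse, so that the restriction of ${\cal T}$ to $\ker\ave{\cdot}$ is onto $\ker\ave{\cdot}$; the inclusion $\ran{}{\cal T} \subset \ker\ave{\cdot}$ is the integration by parts remark stated just before the Proposition.

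For the quantitative part I would estimate pointwise, bounding $\int _0 ^\tau \leq \int _0 ^T$ and applying the Cauchy--Schwarz inequality,
\[
|u(s,x,p)|^2 \leq \left(\int _0 ^T |v\circ(S,X,P)(\sigma;s,x,p)|\,\mathrm{d}\sigma\right)^2 \leq T\int _0 ^T |v\circ(S,X,P)(\sigma;s,x,p)|^2\,\mathrm{d}\sigma.
\]
Integrating over $(s,x,p) \in [0,T]\times\R^3\times\R^3$, interchanging the $\sigma$ and $(s,x,p)$ integrations, and performing, for each fixed $\sigma$, the change of variables $(s,x,p)\mapsto (S,X,P)(\sigma;s,x,p)$ — whose Jacobian is $1$ and which is compatible with $T$ periodicity in $s$ (it is the translation $s\mapsto s+\sigma$) — every inner term equals $\|v\|^2$, whence $\|u\|^2 \leq T^2\|v\|^2$. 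This yields at once the Poincar\'e inequality $\|u\| \leq T\|{\cal T}u\|$ on $D({\cal T})\cap\ker\ave{\cdot}$, the injectivity of ${\cal T}$ there (if ${\cal T}u = 0$ then $\|u\|\leq T\cdot 0 = 0$), and the bound $\|{\cal T}^{-1}\|_{{\cal L}(\ker\ave{\cdot})} \leq T$.

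The ODE integration and the change of variables are routine; the point needing real care is the functional-analytic bookkeeping. The solution formula is transparent for smooth, compactly supported $v$, and for such $v$ one must check that the function $u$ defined above genuinely lies in $D({\cal T})$ with ${\cal T}u = v$ understood in the weak (distributional) sense of \eqref{Equ17}; then one extends to all $v \in \ker\ave{\cdot}$ by density together with the continuity estimate just proved. I also expect to justify carefully the Fubini interchange and the flow invariance of $\mathrm{d}s\,\mathrm{d}x\,\mathrm{d}p$ on the periodic slab — precisely the place where both the divergence free character of the transport field and the $T$ periodicity of $\theta$ are used.
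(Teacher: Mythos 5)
Your construction is correct in substance, but it follows a genuinely different route from the paper's. The paper never writes the periodic primitive directly: it solves the regularized resolvent equation $\alpha u_\alpha + {\cal T}u_\alpha = v$ of \eqref{Equ21}, for which there is an explicit exponential formula valid for every $v \in \ker \ave{\cdot}$ in $L^2$ (no smoothness needed), obtains the uniform bound $\|u_\alpha\| \leq T \|v\|$ by integrating by parts against the $T$ periodic primitive $V(\tau;s,x,p)$ (periodic precisely because $\ave{v} = 0$), and then extracts a weakly convergent sequence $u_{\alpha_n} \rightharpoonup u$, which automatically satisfies $u \in D({\cal T})$, ${\cal T}u = v$, $\ave{u} = 0$, $\|u\| \leq T\|v\|$. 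Your closed formula $u = -\frac{1}{T}\int_0^T\!\!\int_0^\tau v\circ (S,X,P)(\sigma;\cdot)\,\mathrm{d}\sigma\,\mathrm{d}\tau$ is the same zero-mean-primitive idea made explicit; it is more transparent and avoids the weak compactness step, but it pays for that with the smoothing/density and closedness-of-${\cal T}$ bookkeeping you acknowledge at the end (approximate $v$ by $v_n - \ave{v_n}$ with $v_n$ smooth, use the linear estimate on differences, and the distributional definition of $D({\cal T})$ to pass to the limit). Your checks that the free constant forces $\ave{u} = 0$, that the flow map is measure preserving with Jacobian $1$ on the periodic slab, and the resulting bound $\|u\| \leq T\|v\|$ are all correct and match the paper's constant.

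One logical step should be reordered. The estimate you prove is a bound on the particular solution produced by your formula; to turn it into the Poincar\'e inequality for an arbitrary $u \in D({\cal T}) \cap \ker \ave{\cdot}$ you must first know that such a $u$ coincides with the constructed zero-average solution of ${\cal T}u = v$, i.e. you need injectivity. Deriving injectivity afterwards from the Poincar\'e inequality (``if ${\cal T}u = 0$ then $\|u\| \leq T \cdot 0$'') is therefore circular as written. The repair is one line and is exactly what the paper does: if $u \in D({\cal T}) \cap \ker \ave{\cdot}$ and ${\cal T}u = 0$, then $u \in \ker {\cal T}$, and since $\ave{\cdot}$ is the orthogonal projection onto $\ker {\cal T}$ by Proposition \ref{P1}, $u = \ave{u} = 0$. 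With uniqueness established first, every zero-average element of $D({\cal T})$ is the constructed solution associated with $v = {\cal T}u$, and the Poincar\'e inequality together with $\|({\cal T}|_{\ker \ave{\cdot}})^{-1}\|_{{\cal L}(\ker \ave{\cdot}, \ker \ave{\cdot})} \leq T$ follows exactly as you state.
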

\begin{proof}
We already know that $\ran{} {\cal T} \subset  \ker \ave{\cdot}$. Assume now that $u \in D({\cal T}) \cap \ker \ave{\cdot}$ such that ${\cal T } u = 0$. Since $\ave{\cdot} = \mathrm{Proj}_{\ker {\cal T}}$ we have $u = \ave{u} = 0$ saying that ${\cal T}|_{\ker \ave{\cdot}}$ is injective. Consider now $v \in \ker \ave{\cdot}$ and let us prove that there is $u \in \ker \ave{\cdot} \cap D ({\cal T})$ such that ${\cal T }u = v$. For any $\alpha >0$ there is a unique $u _\alpha  \in D({\cal T})$ such that  
\begin{equation}
\label{Equ21}
\alpha \; u _\alpha + {\cal T}u _\alpha = v.
\end{equation}
Indeed it is easily seen that the solutions $(u_\alpha)_{\alpha >0}$ are given by
$$
u _\alpha (s,x, p) = \int _{\R_-} e ^{\alpha \tau} v\left (s+\tau, x, p - \frac{m\oc}{2} \tes{} \;^\perp x + \frac{m \oc}{2} \theta (s + \tau) \;^\perp x \right )\;\mathrm{d}\tau.
$$
Applying the average operator to (\ref{Equ21}) yields $\ave{u _\alpha} = 0$ for any $\alpha >0$. We introduce the function 
\[
V(\tau;s,x,p) = \int _\tau ^0 v\left (s+r, x, p - \frac{m\oc}{2} \tes{} \;^\perp x + \frac{m\oc}{2} \theta (s+r) \;^\perp x \right )\;\mathrm{d}r.
\] 
Notice that for any fixed $(s,x,p)$ the function $\tau \to V(\tau;s,x,p)$ is $T$ periodic, because $\ave{v} = 0$ and thus $\|V(\tau)\| \leq T\|v\|$ for any $\tau \in \R$. Integrating by parts we obtain
$$
u _\alpha (s,x,p) = - \int _{\R_-} e ^{\alpha \tau} \partial _\tau V \;\mathrm{d}\tau = \int _{\R _-} \alpha e ^{\alpha \tau} V(\tau;s,x,p) \;\mathrm{d}s
$$
implying that 
\[
\|u _\alpha \| \leq \int _{\R _-} \alpha e ^{\alpha \tau} \|V(\tau) \|\mathrm{d}\tau \leq T \|v \|.
\]
Extracting a sequence $(\alpha _n )_n$ such that $\lim _{n \to +\infty} \alpha _n = 0$, $\lim _{n \to +\infty} u _{\alpha _n} = u$ weakly in $\ltpltxp{}$ we deduce easily that
$$
u \in D({\cal T}),\;\;{\cal T} u = v,\;\;\ave{u} = 0,\;\;\|u \| \leq T \|v\|
$$
saying that $\left ( {\cal T} |_{\ker \ave{\cdot}} \right ) ^{-1}$ is bounded linear operator and $\|\left ( {\cal T} |_{\ker \ave{\cdot}} \right ) ^{-1}\|_{{\cal L} (\ker \ave{\cdot},\ker \ave{\cdot})} \leq T$.
\end{proof}
\begin{remark}
\label{CommSupp} Notice that ${\cal T}^{-1}$ leaves invariant the set of zero average functions, with compact support. Indeed, if $v \in \ker \ave{\cdot}$ has compact support, let us say $\supp v \subset \{(s,x,p) : |x|\leq R, |p| \leq R\}$ for some $R >0$, it is easily seen that for any $\alpha >0$, the function $u _\alpha $ in \eqref{Equ21} has compact support (uniformly with respect to $\alpha$)
\[
\supp u _\alpha  \subset \{ (s,x,p) \;:\; |x| \leq R,\;\; |p| \leq ( 1 + m |\oc| \; \|\theta\|_{L^\infty} ) R\}
\]
and therefore the weak limit $u = \lim _{\alpha \searrow 0 } u _\alpha $ has compact support. 
\end{remark}
Notice that we have the orthogonal decomposition of $\ltpltxp{}$ into invariant functions along the characteristics (\ref{Equ17Bis}) and zero average functions $u = \ave{u} + ( u - \ave{u})$, since by Proposition \ref{P1} we have
\[
\intsxp{(u - \ave{u}) \ave{u} } = 0.
\]
We end this section with the following Sobolev inequality
\begin{pro}
\label{P3} There is a constant $C = C(T)$ such that for any function $u\in D({\cal T})$ we have
\[
\|u \|_{\lipltxp{}} \leq C(T) ( \|u \| + \|{\cal T}u \|).
\]
In particular for any function $u \in D({\cal T}) \cap \ker \ave{\cdot}$ we have 
\[
\|u \|_{\lipltxp{}} \leq C(T) ( 1 + T) \|{\cal T}u\|.
\]
\end{pro}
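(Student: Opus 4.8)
The plan is to establish the Sobolev-type estimate by a one-dimensional argument along the characteristic flow \eqref{Equ18}, exploiting that the flow is, up to a shear in the $p$-variable, simply translation in $s$. The key observation is that for $u \in D({\cal T})$, the function $\tau \mapsto u(S(\tau;s,x,p), X(\tau;s,x,p), P(\tau;s,x,p))$ is, for almost every $(s,x,p)$, absolutely continuous on $[0,T]$ with derivative equal to $({\cal T}u)$ evaluated along the same characteristic; this is because ${\cal T}u$ is exactly the derivative of $u$ along the flow (note the flow is divergence free, so no zeroth-order term appears when differentiating $u$ rather than $u\,(1,\tfrac{m\oc}{2}\teps{}{}^\perp x)$). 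Writing $g(\tau) = u(s+\tau, x, p + \tfrac{m\oc}{2}(\theta(s+\tau)-\tes{}){}^\perp x)$, I would first note that for any two values $\tau_1,\tau_2 \in [0,T]$ one has $g(\tau_1) = g(\tau_2) + \int_{\tau_2}^{\tau_1} g'(\tau)\,\mathrm{d}\tau$, and averaging over $\tau_2 \in [0,T]$ gives the pointwise bound $|g(\tau_1)| \le \ave{|u|}(\cdot) + \int_0^T |({\cal T}u)(s+\tau,\dots)|\,\mathrm{d}\tau$.

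Next I would square this inequality, use $(a+b)^2 \le 2a^2 + 2b^2$ and Cauchy--Schwarz on the time integral (picking up a factor $T$), and then integrate in $(x,p)$ for fixed $s = s + \tau_1$. Because the shear $p \mapsto p + \tfrac{m\oc}{2}(\theta(\cdot)-\theta(\cdot)){}^\perp x$ is measure preserving on $\R^3_p$ for each fixed $x$, the $(x,p)$-integrals of $|u|^2$ and $|{\cal T}u|^2$ along the characteristic reduce to ordinary $(x,p)$-integrals at shifted times, exactly as in the proof of Proposition \ref{P1}. Integrating the right-hand side once more in $\tau$ over $[0,T]$ and using periodicity converts everything into the full norms $\|u\|$ and $\|{\cal T}u\|$, yielding $\intxp{|u|^2(s)} \le C(T)(\|u\|^2 + \|{\cal T}u\|^2)$ with a constant depending only on $T$. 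Taking the supremum over $s \in \R$ (the estimate is uniform in $s$ since the right-hand side does not depend on $s$) gives $\|u\|_{\lipltxp{}} \le C(T)(\|u\| + \|{\cal T}u\|)$, which is the first claim. The second claim is then immediate: for $u \in D({\cal T}) \cap \ker\ave{\cdot}$ the Poincar\'e inequality of Proposition \ref{P2} gives $\|u\| \le T\|{\cal T}u\|$, so $\|u\|_{\lipltxp{}} \le C(T)(1+T)\|{\cal T}u\|$.

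The main technical obstacle is the justification that $\tau \mapsto g(\tau)$ is absolutely continuous with $g' = ({\cal T}u)$ along the flow for a.e.\ $(s,x,p)$, i.e.\ that the formal computation is legitimate for all $u \in D({\cal T})$ and not merely for smooth $u$. I would handle this by a density argument: smooth, $T$-periodic, compactly supported functions are dense in $D({\cal T})$ for the graph norm $\|u\| + \|{\cal T}u\|$, the desired inequality is stable under passage to the limit in this norm (the left-hand side is controlled in $L^\infty_\#(\R_s; L^2)$, which dominates convergence in, say, $L^2_\#$), and the flow identity is classical for smooth $u$. Everything else is a bookkeeping exercise in changes of variables and Cauchy--Schwarz, entirely parallel to the computations already carried out in Propositions \ref{P1} and \ref{P2}.
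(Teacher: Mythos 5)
Your proposal is correct and follows essentially the same route as the paper: differentiate $u$ along the characteristic flow, integrate to express $u$ at one point of the characteristic in terms of its value at another point plus an integral of ${\cal T}u$, average over a period, use the measure-preserving shear together with Cauchy--Schwarz to convert to the norms $\|u\|$ and $\|{\cal T}u\|$, take the supremum in $s$, and conclude the second estimate from the Poincar\'e inequality of Proposition \ref{P2}. The paper handles the non-smooth case by the same density remark you make, so there is no substantive difference.
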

\begin{proof}
Without loss of generality we can assume that the function $u$ is smooth (the general case follows by standard density arguments). For any $s \in \R$ and $t \in [s-T, s]$ we have
\[
\frac{d}{ds}\left \{ u \left (s, x, p - \frac{m\oc}{2} ( \theta(t)- \tes{}) \;^\perp x   \right )    \right \} = ({\cal T}u ) \left (s, x, p - \frac{m\oc}{2} ( \theta (t) - \tes{}) \;^\perp x    \right ).
\]
After integration one gets
\[
u\left (s, x, p - \frac{m\oc}{2} ( \theta(t)- \tes{}) \;^\perp x   \right )  = u(t,x,p) + \int _t ^s {\cal T}u \left (\tau, x, p - \frac{m\oc}{2} ( \theta(t)- \theta(\tau)) \;^\perp x   \right ) 
\]
implying that 
\[
\|u (s,\cdot, \cdot)\|_{\ltxp{}} ^2 \leq 2 \|u (t,\cdot, \cdot) \|^2 _{\ltxp{}} + 2T \|{\cal T}u \|^2.
\]
Averaging with respect to $t$ over the period $[s-T, s]$ yields
\[
\|u \|^2 _{\lipltxp{}} \leq \frac{2}{T} \|u \|^2 + 2T \|{\cal T}u \|^2
\]
and the first statement follows with $C(T) = \max \{\sqrt{2/T}, \sqrt{2T} \}$. Moreover, if $u \in D({\cal T}) \cap \ker \ave{\cdot}$ we know by Proposition \ref{P2} that $\|u\| \leq T \|{\cal T}u\|$ and therefore 
\[
\|u\|_{\lipltxp{}} \leq C(T) \{T \|{\cal T}u \| + \|{\cal T}u \|) = C(T) (1 + T) \|{\cal T}u\|.
\]
\end{proof}

\section{Commutation properties of the average with respect to derivations}
\label{CommAveDer}

We have seen that ${\cal T }^{-1}$ restricted to zero average functions is linear and continuous. In view of further regularity  that we need for the asymptotic analysis of \eqref{Equ4} we investigate now the action of ${\cal T}^{-1}$ and $\ave{\cdot}$ over subspaces of smooth functions. We formulate our statements in the general framework of a characteristic flow associated to smooth, divergence free fields. Let $b : \R ^m \to \R^m$ be a field satisfying
\[
b \in W^{1,\infty}_{\mathrm{loc}} (\R ^m),\;\;\Divy  b = 0
\]
and the growth condition
\[
\exists \;C >0 \;:\; |b(y) | \leq C ( 1 + |y|),\;\;y
\in \R ^m.
\]
Under the above hypotheses the characteristic flow $Y = Y(s;y)$ is
well defined
\[
\frac{dY}{ds} = b(Y(s;y)),\;\;(s,y) \in \R \times \R
^m
\]
\[
Y(0;y) = y,\;\; y \in \R ^m,
\]
and has the regularity $Y \in W^{1,\infty}_{\mathrm{loc}} ( \R
\times \R ^m)$. Since the field is divergence free, we deduce by Liouville's theorem that for any $s \in
\R$, the map $y \to Y(s;y)$ is measure preserving. Notice that we don't make any periodicity assumption on the flow $Y$. As usual the notation $b \cdot \nabla _y$ stands for the first order differential operator with domain
\[
\mathrm{D}(b \cdot \nabla _y) = \{ u \in \lty{}\;:\; \Divy (u (y) b(y))  \in \lty{}
\}
\]
which maps any $u \in \mathrm{D}(b \cdot \nabla _y)$ to the function $\Divy (u (y) b(y)) $. The kernel of this operator is given by $L^2$ functions which are constant along the flow $Y$
\[
\ker (b \cdot \nabla _y  ) = \{ u \in \lty{}\;:\; u (Y(s;y)) = u(y),\;s\in
\R,\;\mathrm{a.e.}\; y \in \R^m\}.
\]
It is easily seen that for any function $u \in \lty{}$ and any $T >0$
\[
\left \| \frac{1}{T} \int _0 ^T u (Y(s;\cdot))\;\mathrm{d}s    \right \| _{\lty{}} \leq \|u \| _{\lty{}}
\]
and therefore we can expect compactness properties for the family of averages along the flow $Y$ {\it i.e.,} $\{ \frac{1}{T} \int _0 ^T u (Y(s;\cdot))\;\mathrm{d}s , T >0   \}$. Indeed, the
mean ergodic theorem, or von Neumann's ergodic theorem (see
\cite{ReedSimon}, pp. 57) allows us to construct the average operator along the flow $Y$.
\begin{pro}
\label{LargeTStrong}
\label{GenAveOpe} For any function $u \in \lty{}$ the averages $
\frac{1}{T} \int _0 ^T u (Y(s;\cdot))\;\mathrm{d}s$ converge strongly in $\lty{}$, when $T \to +\infty$, towards some function denoted $\ave{u} \in \lty{}$. The map $\ave{\cdot} : \lty{} \to \lty{}$ is linear, continuous and coincides with the orthogonal projection on $ \ker (b \cdot \nabla _y )$
\[
\ave{u} \in \ker ( b \cdot \nabla _y ) \;:\; \inty{(u(y) - \ave{u}(y)) \varphi (y) } = 0,\;\forall \;\varphi \in \ker ( b \cdot \nabla _y ).
\] 
\end{pro}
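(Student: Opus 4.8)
The plan is to deduce Proposition~\ref{GenAveOpe} from the mean ergodic theorem applied to a suitable one-parameter unitary group on $\lty{}$. First I would observe that since $b$ is divergence free with the stated growth and regularity, the map $y \mapsto Y(s;y)$ is, for each $s \in \R$, a measure-preserving bijection of $\R^m$ (Liouville's theorem), so the composition operators $(U(s)u)(y) = u(Y(s;y))$ form a strongly continuous group of isometries of $\lty{}$; in fact each $U(s)$ is unitary with $U(s)^{-1} = U(-s)$. I would then invoke the mean ergodic theorem of von Neumann (as cited, \cite{ReedSimon}) which asserts exactly that the Cesàro means $\frac{1}{T}\int_0^T U(s)u\;\mathrm{d}s$ converge strongly in $\lty{}$ as $T \to +\infty$ to $Pu$, where $P$ is the orthogonal projection onto the fixed-point subspace $\{u : U(s)u = u \text{ for all } s\}$. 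Defining $\ave{u}$ to be this limit, linearity is immediate and continuity follows from the uniform bound $\|\frac{1}{T}\int_0^T u(Y(s;\cdot))\;\mathrm{d}s\|_{\lty{}} \leq \|u\|_{\lty{}}$ already noted in the text, which passes to the limit to give $\|\ave{u}\| \leq \|u\|$.

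The remaining point is to identify the fixed-point subspace of the group with $\ker(b\cdot\nabla_y)$. One inclusion is clear: if $u(Y(s;y)) = u(y)$ for all $s$ and a.e.\ $y$, then $u$ is constant along characteristics, hence $\Divy(u(y)b(y)) = 0$ in the distributional sense, so $u \in \ker(b\cdot\nabla_y)$; conversely, if $\Divy(ub) = 0$ then $s \mapsto U(s)u$ is a solution of $\frac{d}{ds}U(s)u = b\cdot\nabla_y(U(s)u) = U(s)(b\cdot\nabla_y u) = 0$ in $\lty{}$, so $U(s)u = u$ for all $s$. I would phrase this transport-equation argument carefully, noting that $U(s)$ commutes with $b\cdot\nabla_y$ (the flow property) and that $U(\cdot)u$ is the unique $\lty{}$-valued solution of the corresponding linear evolution equation, so constancy in $s$ is equivalent to the generator annihilating $u$. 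Once the fixed-point subspace is identified with $\ker(b\cdot\nabla_y)$, the characterization of $\ave{u}$ as the orthogonal projection onto that kernel is exactly the conclusion of the ergodic theorem, and the variational identity $\inty{(u - \ave{u})\varphi} = 0$ for $\varphi \in \ker(b\cdot\nabla_y)$ is just the definition of orthogonal projection. That $\ave{u}$ itself lies in $\ker(b\cdot\nabla_y)$ is again part of the projection statement.

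The main obstacle is the rigorous identification of $\ker(b\cdot\nabla_y)$ (defined via distributional divergence, as in the $\mathrm{D}(b\cdot\nabla_y)$ domain) with the set of $L^2$ functions invariant along the flow $Y$; the text asserts this as ``well known,'' and indeed it requires the regularity theory of the transport equation with $W^{1,\infty}_{\mathrm{loc}}$, divergence-free vector fields (DiPerna--Lions / Ambrosio-type uniqueness), ensuring that the only $\lty{}$ solutions of the stationary equation $b\cdot\nabla_y u = 0$ are the flow-invariant ones, and that the flow generated by $b$ is well-posed in the relevant sense. Granting that identification, the rest is the standard functional-analytic machinery: everything else reduces to the mean ergodic theorem plus the elementary isometry bound, with no further computation needed.
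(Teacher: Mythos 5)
Your proposal is correct and follows essentially the same route as the paper, which also constructs $\ave{\cdot}$ by applying von Neumann's mean ergodic theorem to the group of composition operators along the measure-preserving flow $Y$ (the paper simply cites \cite{ReedSimon} and defines $\ker(b\cdot\nabla_y)$ directly as the flow-invariant $L^2$ functions, so the identification step you flag is taken as given there). Note that since $b\in W^{1,\infty}_{\mathrm{loc}}$ with linear growth, the flow is classical, so that identification needs only an elementary argument rather than DiPerna--Lions theory.
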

It is easily seen that $\ran ( b \cdot \nabla _y) \subset \ker \ave{\cdot}$. Actually it is shown in \cite{BosTraSin} that $\overline{\ran  ( b \cdot \nabla _y)} = \ker \ave{\cdot}$. We are searching now for derivations commuting with the average operator. We prove that the average operator is commuting with any derivation $c \cdot \nabla _y$ associated to a field $c$ in involution with $b$. 
We recall here the following basic
results concerning derivation operators along fields in $\R
^m$. For any $\xi = (\xi _1(y),...,\xi_m (y))$, where $y \in \R
^m$, we denote by $L_\xi $ the operator $\xi \cdot \nabla _y$. A
direct computation shows that for any smooth fields $\xi, \eta $,
the commutator between $L_\xi, L_\eta$ is still a first order
operator, given by
$
[L_\xi, L_\eta] := L_\xi L_\eta - L_\eta L_\xi = L_\chi,
$
where $\chi $ is the Poisson bracket of $\xi$ and $\eta$
$$
\chi = [\xi, \eta],\;\;[\xi, \eta]_i = (\xi \cdot \nabla _y ) \eta
_i -(\eta \cdot \nabla _y ) \xi _i = L_\xi (\eta _i) - L_\eta (\xi
_i),\;\;i \in \{1,...,m\}.
$$
It is well known (see \cite{Arnold}, pp. 93) that $L_\xi, L_\eta$
commute (or equivalently the Poisson bracket $[\xi, \eta]$
vanishes) iff the flows corresponding to $\xi, \eta$, let say
$Z_1, Z_2$, commute
$$
Z_1(s_1;Z_2(s_2;y)) = Z_2(s_2;Z_1(s_1;y)),\;\;s_1, s_2 \in
\R,\;\;y \in \R ^m.
$$
Consider a smooth field $c$ in involution with $b$ and having
bounded divergence
$$
c \in W^{1,\infty}_{\mathrm{loc}} (\R ^m),\;\;\Divy c \in
\liy{},\;\;[c,b] = 0
$$
and let us denote by $Z$ the flow associated to $c$ (we assume
that $Z$ is well defined for any $(s,y) \in \R \times \R ^m$). We claim that the following commutation property holds true.

\begin{pro}
\label{Invariance} Assume that $c$ is a smooth field in involution
with $b$, with bounded divergence and well defined flow. Then the average operator  commutes
with the translations along the flow of $c$
$$
\ave{u \circ Z(h;\cdot)} = \ave{u} \circ
Z(h;\cdot),\;\;u \in \lty{},\;\;h \in \R.
$$
\end{pro}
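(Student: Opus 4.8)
The plan is to reformulate the identity in terms of the groups of composition operators generated by the two flows and then to deduce the commutation of averages from the commutation of these groups. First I would introduce, for $s\in\R$, the operator $U(s)$ on $\lty{}$ by $(U(s)u)(y)=u(Y(s;y))$; since $\Divy b=0$, Liouville's theorem makes $y\mapsto Y(s;y)$ measure preserving, so each $U(s)$ is an isometry and $\{U(s)\}_{s\in\R}$ is a strongly continuous one-parameter group. By Proposition \ref{GenAveOpe}, $\ave{u}=\lim_{T\to+\infty}\frac{1}{T}\int_0^T U(s)u\;\mathrm{d}s$, the limit being strong in $\lty{}$ and the integral a Bochner integral. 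Likewise I would set $(V(h)u)(y)=u(Z(h;y))$ for $h\in\R$. Here $c$ is only divergence bounded, so $V(h)$ need not be isometric; but the Liouville identity $\partial_h\det(\partial_y Z(h;y))=(\Divy c)(Z(h;y))\det(\partial_y Z(h;y))$ gives $\det(\partial_y Z(h;y))=\exp\big(\int_0^h(\Divy c)(Z(\sigma;y))\,\mathrm{d}\sigma\big)$, hence a Jacobian bounded above and below by $e^{\mp|h|\|\Divy c\|_{\liy{}}}$. Consequently $V(h)\in{\cal L}(\lty{})$ with $\|V(h)\|\le e^{|h|\|\Divy c\|_{\liy{}}/2}$, and in particular $u\circ Z(h;\cdot)=V(h)u\in\lty{}$, so both sides of the claimed identity make sense.

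The structural ingredient is the commutation of the two flows. Since $[c,b]=0$, the classical fact recalled just above (cf. \cite{Arnold}) yields $Y(s;Z(h;y))=Z(h;Y(s;y))$ for all $s,h\in\R$ and $y\in\R^m$. Reading this at the operator level, $(U(s)V(h)u)(y)=u\big(Z(h;Y(s;y))\big)=u\big(Y(s;Z(h;y))\big)=(V(h)U(s)u)(y)$, i.e. $U(s)V(h)=V(h)U(s)$ on $\lty{}$ for every $s,h$.

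The last step is to push $V(h)$ through the ergodic mean. For fixed $h$ and $T>0$, since $V(h)$ is bounded linear and $s\mapsto U(s)u$ is strongly continuous, $V(h)$ commutes with the Bochner integral, so, using the commutation of the groups,
\[
\frac{1}{T}\int_0^T U(s)\big(V(h)u\big)\;\mathrm{d}s=\frac{1}{T}\int_0^T V(h)\big(U(s)u\big)\;\mathrm{d}s=V(h)\left(\frac{1}{T}\int_0^T U(s)u\;\mathrm{d}s\right).
\]
Letting $T\to+\infty$, the left-hand side converges strongly to $\ave{V(h)u}=\ave{u\circ Z(h;\cdot)}$ by Proposition \ref{GenAveOpe} applied to $V(h)u$, while the right-hand side converges to $V(h)\ave{u}=\ave{u}\circ Z(h;\cdot)$ by continuity of $V(h)$. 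This gives $\ave{u\circ Z(h;\cdot)}=\ave{u}\circ Z(h;\cdot)$.

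I expect no serious obstacle: the substance lies entirely in the commutation of the flows, which the involution hypothesis $[c,b]=0$ supplies through the quoted theorem of Arnold. The only quantitative point is the $\lty{}$-boundedness of the composition operators $V(h)$, which is exactly where $\Divy c\in\liy{}$ enters; and the one spot needing mild care is the interchange of $V(h)$ with the average, which is routine once $V(h)$ is known to be bounded and the average is viewed as a strong limit of Bochner integrals.
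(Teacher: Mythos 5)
Your proof is correct and is essentially the paper's argument: both rest on the flow commutation $Y(s;Z(h;\cdot))=Z(h;Y(s;\cdot))$ coming from $[c,b]=0$, the boundedness of composition along $Z$ (via the Jacobian bound from $\Divy c\in\liy{}$), and passing to the strong ergodic limit of Proposition \ref{GenAveOpe}. The only difference is presentational — you package the steps as commuting operator groups $U(s)$, $V(h)$, whereas the paper writes the same chain of equalities directly on the time averages.
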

\begin{proof}
The commutation property of the flows $Y, Z$ and Proposition \ref{LargeTStrong} allow us to write the strong convergences in $\lty{}$
\begin{eqnarray}
\nonumber
\ave{u \circ Z(h;\cdot)} & = & \limT \frac{1}{T} \int _0 ^ T u \circ Z(h;Y(s;\cdot))\;\mathrm{d}s \\
& = & \limT \frac{1}{T} \int _0 ^ T u \circ Y(s;Z(h;\cdot))\;\mathrm{d}s \\
& = & \left (\limT \frac{1}{T} \int _0 ^ T u (Y(s;\cdot))\;\mathrm{d}s
\right ) \circ Z(h;\cdot) \\
& = & \ave{u} \circ Z(h;\cdot).
\end{eqnarray}
Notice that the third equality in the above computations follows by changing the variable along the flow $Z$ and by using the boundedness of $\mathrm{div}_y c$.
\end{proof}
We denote by $c \cdot \nabla _y$ the operator with the domain
\[
\mathrm{D}(c \cdot \nabla _y) = \{ u \in \lty{}\;:\; \Divy (u (y) c(y))  \in \lty{}
\}
\]
which maps any function $u \in \mathrm{D}(c \cdot \nabla _y)$ to the function $\Divy (uc) - u \;\Divy c$. 
\begin{pro}
\label{DerCom} Under the hypotheses of Proposition
\ref{Invariance}, assume that $u \in \mathrm{D}(c \cdot \nabla _y)$. Then $\ave{u} \in
\mathrm{D}(c \cdot \nabla _y)$  and $c \cdot \nabla _y \ave{u}  =
\ave{c \cdot \nabla _y u}$.
\end{pro}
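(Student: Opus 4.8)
The plan is to use the characterization of the domain of $c \cdot \nabla_y$ via duality and exploit the commutation of averages with translations along the flow $Z$ from Proposition \ref{Invariance}. First I would recall that $u \in \mathrm{D}(c \cdot \nabla_y)$ means that the distributional divergence $\Divy(uc)$ belongs to $\lty{}$, equivalently that there exists $w \in \lty{}$ with
\[
\inty{u(y)\, (c(y) \cdot \nabla_y \varphi)(y)} = - \inty{w(y)\, \varphi(y)}, \qquad \forall\, \varphi \in C^\infty_c(\R^m),
\]
and then $c \cdot \nabla_y u := \Divy(uc) - u\,\Divy c = w - u\,\Divy c \in \lty{}$ (using $\Divy c \in \liy{}$). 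So I want to show $\ave{u}$ admits such a $w$, and identify it as $\ave{c \cdot \nabla_y u}$.

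The key step is to differentiate the identity of Proposition \ref{Invariance} with respect to the translation parameter $h$. Write $\ave{u \circ Z(h;\cdot)} = \ave{u} \circ Z(h;\cdot)$ as an equality in $\lty{}$ for every $h \in \R$. On the right-hand side, since $\ave{u} \in \lty{}$ and $Z$ is the flow of $c$, the map $h \mapsto \ave{u}\circ Z(h;\cdot)$ is differentiable at $h=0$ in the sense of distributions (or in a weak $\lty{}$ sense) precisely when $\ave{u} \in \mathrm{D}(c \cdot \nabla_y)$, with derivative $c \cdot \nabla_y \ave{u}$. On the left-hand side, because $u \in \mathrm{D}(c \cdot \nabla_y)$, the map $h \mapsto u \circ Z(h;\cdot)$ is differentiable at $h = 0$ with derivative $c \cdot \nabla_y u \in \lty{}$; then, using that $\ave{\cdot}$ is linear and continuous on $\lty{}$ (Proposition \ref{LargeTStrong}), the difference quotients
\[
\frac{\ave{u \circ Z(h;\cdot)} - \ave{u}}{h} = \ave{\frac{u \circ Z(h;\cdot) - u}{h}}
\]
converge in $\lty{}$ to $\ave{c \cdot \nabla_y u}$ as $h \to 0$. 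Comparing the two sides gives that $h \mapsto \ave{u} \circ Z(h;\cdot)$ is differentiable at $h=0$ in $\lty{}$, hence $\ave{u} \in \mathrm{D}(c \cdot \nabla_y)$, and $c \cdot \nabla_y \ave{u} = \ave{c \cdot \nabla_y u}$.

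To make the differentiation rigorous rather than formal, I would test against $\varphi \in C^\infty_c(\R^m)$ and work with the bilinear pairing: for each $h$, the change of variables $y \mapsto Z(h;y)$ (measure-preserving up to the Jacobian $\exp(\int_0^h \Divy c \, d\sigma)$, which is controlled since $\Divy c$ is bounded) lets me rewrite $\inty{(u \circ Z(h;\cdot))\,\varphi}$ and differentiate under the integral sign, the integrand being Lipschitz in $h$ on the compact support of $\varphi$. This produces the identity $\frac{d}{dh}\big|_{h=0}\inty{(u\circ Z(h;\cdot))\varphi} = \inty{(c\cdot\nabla_y u)\,\varphi}$, and similarly for $\ave{u}$, and then one applies $\ave{\cdot}$ and passes the limit using its continuity. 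The main obstacle I expect is the careful justification of the $h$-differentiation in $\lty{}$ (as opposed to merely distributionally): one must check that the difference quotients $\big(u\circ Z(h;\cdot) - u\big)/h$ actually form a Cauchy family in $\lty{}$ converging to $c\cdot\nabla_y u$, which is exactly the statement that membership in $\mathrm{D}(c\cdot\nabla_y)$ is equivalent to $\lty{}$-differentiability of the translation semigroup $h \mapsto u\circ Z(h;\cdot)$ generated by $c\cdot\nabla_y$ — a semigroup-theoretic fact about the (anti)symmetric part of $c\cdot\nabla_y$ that may need to be invoked or proved, using again the boundedness of $\Divy c$ to ensure $Z(h;\cdot)$ induces a $C_0$-group of bounded operators on $\lty{}$.
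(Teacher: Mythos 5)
Your proposal is correct and follows essentially the same route as the paper: the paper's proof also differentiates the identity $\ave{u\circ Z(h;\cdot)}=\ave{u}\circ Z(h;\cdot)$ from Proposition \ref{Invariance} by forming the difference quotients, uses the strong $\lty{}$ convergence of $(u\circ Z(h;\cdot)-u)/h$ to $c\cdot\nabla_y u$ together with the continuity of $\ave{\cdot}$, and identifies the resulting limit as $c\cdot\nabla_y\ave{u}=\ave{c\cdot\nabla_y u}$. The extra care you devote to the equivalence between membership in $\mathrm{D}(c\cdot\nabla_y)$ and $\lty{}$-differentiability along the flow $Z$ is exactly the fact the paper invokes implicitly, so no genuinely different argument is involved.
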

\begin{proof}
For any $h \in \R^\star$ we have
\[
\frac{\ave{u} \circ Z(h;\cdot) - \ave{u}}{h} = \frac{\ave{u \circ Z(h;\cdot)} - \ave{u}}{h} = \ave{\frac{u \circ Z(h;\cdot) - u}{h}}.
\]
Since $u \in \mathrm{D}(c \cdot \nabla _y)$ we have the strong convergence $\lim _{h \to 0} (u \circ Z(h;\cdot) - u)/h = c \cdot \nabla _y u$ in $\lty{}$ and by the continuity of the average operator, we deduce that 
\[
\lim _{h \to 0} \frac{\ave{u} \circ Z(h;\cdot) - \ave{u}}{h} = \ave{c \cdot \nabla _y u}
\]
strongly in $\lty{}$, saying that $\ave{u} \in \mathrm{D}(c \cdot \nabla _y)$ and $c \cdot \nabla _y \ave{u} = \ave{c \cdot \nabla _y u}$.
\end{proof}
Similarly we can prove
\begin{pro}
\label{TimDer} Assume that $u \in W^{1,p}([0,T];\lty{})$ for some
$p \in (1,+\infty)$. Then the application $(t,y) \to
\ave{u(t,\cdot)}(y)$ belongs to $W^{1,p}([0,T];\lty{})$ and
we have $\partial _t \ave{u} = \ave{\partial _t u }$.
\end{pro}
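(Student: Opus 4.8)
The plan is to exploit that $\ave{\cdot}$ is a bounded linear operator on $\lty{}$ (Proposition \ref{GenAveOpe}) and that any such operator commutes both with Bochner integration in the $t$ variable and with the operation of taking weak time derivatives. No genuinely new idea beyond Proposition \ref{DerCom} is needed; the argument is a soft functional-analytic one.

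First I would check that $t \mapsto \ave{u(t,\cdot)}$ defines an element of $L^p([0,T];\lty{})$. Since $u \in W^{1,p}([0,T];\lty{})$ with $p \in (1,+\infty)$, the one-dimensional Sobolev embedding gives a representative of $u$ in $C([0,T];\lty{})$; composing with the continuous linear map $\ave{\cdot}$ yields a continuous, hence strongly measurable, map $t \mapsto \ave{u(t,\cdot)}$, and the contraction estimate $\|\ave{u(t,\cdot)}\|_{\lty{}} \leq \|u(t,\cdot)\|_{\lty{}}$ of Proposition \ref{GenAveOpe} shows it belongs to $L^\infty([0,T];\lty{}) \subset L^p([0,T];\lty{})$.

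Next I would identify its weak derivative. Fix $\varphi \in C_c^\infty((0,T))$. Because $\ave{\cdot}$ is linear and continuous it commutes with the $\lty{}$-valued Bochner integral, so
\[
\int_0^T \ave{u(t,\cdot)}\,\varphi'(t)\,\mathrm{d}t = \ave{\int_0^T u(t,\cdot)\,\varphi'(t)\,\mathrm{d}t} = \ave{-\int_0^T \partial_t u(t,\cdot)\,\varphi(t)\,\mathrm{d}t} = -\int_0^T \ave{\partial_t u(t,\cdot)}\,\varphi(t)\,\mathrm{d}t,
\]
the middle equality being the very definition of the weak derivative $\partial_t u \in L^p([0,T];\lty{})$. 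This shows $\partial_t \ave{u} = \ave{\partial_t u}$ in the sense of $\lty{}$-valued distributions, and since $\partial_t u \in L^p([0,T];\lty{})$ the contraction estimate again gives $\ave{\partial_t u} \in L^p([0,T];\lty{})$. Hence $\ave{u} \in W^{1,p}([0,T];\lty{})$ with $\partial_t \ave{u} = \ave{\partial_t u}$.

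An alternative closer to the proof of Proposition \ref{DerCom} is to argue with difference quotients: for $h \neq 0$ one has $h^{-1}\big(\ave{u(t+h,\cdot)} - \ave{u(t,\cdot)}\big) = \ave{h^{-1}(u(t+h,\cdot) - u(t,\cdot))}$, and one passes to the limit $h \to 0$ using the strong $L^p_{\mathrm{loc}}$ convergence of the difference quotients of $u$ together with continuity of $\ave{\cdot}$. I do not expect any real obstacle; the only points deserving a word of care are the strong measurability of $t \mapsto \ave{u(t,\cdot)}$ (immediate from continuity of $\ave{\cdot}$ applied to a continuous representative of $u$) and the interchange of $\ave{\cdot}$ with the Bochner integral, which is valid for every bounded linear operator.
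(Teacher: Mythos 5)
Your proposal is correct. The paper itself gives no written proof: the statement is introduced by ``Similarly we can prove'', meaning the intended argument is the exact analogue of Proposition \ref{DerCom}, i.e.\ the difference-quotient identity $h^{-1}(\ave{u(t+h,\cdot)}-\ave{u(t,\cdot)})=\ave{h^{-1}(u(t+h,\cdot)-u(t,\cdot))}$ followed by passage to the limit using continuity of $\ave{\cdot}$ on $\lty{}$ — which is precisely your ``alternative'' second argument. Your primary route is genuinely different and arguably cleaner: you test against $\varphi\in C_c^\infty((0,T))$, commute the bounded linear operator $\ave{\cdot}$ with the $\lty{}$-valued Bochner integral, and invoke the definition of the weak derivative, so that $\partial_t\ave{u}=\ave{\partial_t u}$ is obtained directly as an identity of vector-valued distributions, with $\ave{\partial_t u}\in L^p([0,T];\lty{})$ by the contraction property. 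This soft argument does not use the hypothesis $p>1$ at all (it works for $p=1$ as well), whereas the difference-quotient route is the one for which $p>1$ is the natural setting if one wants to recover the derivative from bounds on the quotients; both are valid here since $\partial_t u$ is assumed to exist in $L^p$. Your measurability remark (continuous representative of $u$ composed with the continuous map $\ave{\cdot}$) and the interchange of a bounded operator with the Bochner integral are both standard and correctly handled, so there is no gap.
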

We are ready now to study how the regularity propagates under the action of $(b \cdot \nabla _y )^{-1}$. We make the following assumption (Poincar\'e inequality)
\begin{equation}
\label{EquPoincareConstant}
\exists \;C_P >0\;\mbox{such that}\; \|u\|_{\lty{}} \leq C_P \|b \cdot \nabla _y u \|_{\lty{}},\;\forall \; u \in \mathrm{D}(b \cdot \nabla _y ) \cap \ker \ave{\cdot}
\end{equation}
meaning that the range of $b \cdot \nabla _y$ is closed {\it i.e.,} $\ran (b \cdot \nabla _y ) = \overline{\ran (b \cdot \nabla _y )} = \ker \ave{\cdot}$ and $b \cdot \nabla _y$ restricted to $\ker \ave{\cdot}$ is one to one map onto  $\ker \ave{\cdot}$ with bounded inverse. Notice that the above hypothesis is satisfied by the operator in \eqref{Equ17}, cf. Proposition \ref{P2}. 
\begin{pro}
\label{RegPropag} Under the hypotheses of Proposition \ref{Invariance} and \eqref{EquPoincareConstant} assume that $v \in \ker \ave{\cdot} \cap \mathrm{D}(c \cdot \nabla _y)$. Let us denote by $u$ the unique zero average solution of $b \cdot \nabla _y u = v$. Then $u \in \mathrm{D} (c \cdot \nabla _y)$ and $\|c \cdot \nabla _y u \|_{\lty{}} \leq C_P \|c \cdot \nabla _y v \|_{\lty{}}$.
\end{pro}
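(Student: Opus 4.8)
The plan is to transfer the commutation identity of Proposition \ref{DerCom} through the inverse of $b\cdot\nabla_y$. First I would invoke Proposition \ref{DerCom}: since $v\in\ker\ave{\cdot}\cap\mathrm{D}(c\cdot\nabla_y)$, we already know that $\ave{c\cdot\nabla_y v} = c\cdot\nabla_y\ave{v} = 0$, so $c\cdot\nabla_y v$ is itself a zero-average function. By the Poincar\'e hypothesis \eqref{EquPoincareConstant}, the operator $b\cdot\nabla_y$ restricted to $\ker\ave{\cdot}$ is invertible with norm at most $C_P$, so there is a unique zero-average $w$ with $b\cdot\nabla_y w = c\cdot\nabla_y v$, and $\|w\|_{\lty{}}\le C_P\|c\cdot\nabla_y v\|_{\lty{}}$. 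The heart of the argument is to show that this $w$ is exactly $c\cdot\nabla_y u$, i.e. that differentiating the relation $b\cdot\nabla_y u = v$ along the flow of $c$ commutes past $b\cdot\nabla_y$ because $[c,b]=0$.

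To make this rigorous I would work with the difference quotients along the flow $Z$ of $c$, exactly as in the proof of Proposition \ref{DerCom}. For $h\in\R^\star$ set $u_h = (u\circ Z(h;\cdot) - u)/h$. Since the flows $Y$ and $Z$ commute, $u\circ Z(h;\cdot)$ solves $b\cdot\nabla_y(u\circ Z(h;\cdot)) = (b\cdot\nabla_y u)\circ Z(h;\cdot)\cdot J_h = v\circ Z(h;\cdot)$ — here one must use that $Z(h;\cdot)$ maps characteristics of $b$ to characteristics of $b$ and is measure preserving up to the bounded factor $\exp(\int_0^h \Divy c)$, so the pushforward of the operator $b\cdot\nabla_y$ under $Z(h;\cdot)$ is again $b\cdot\nabla_y$. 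Subtracting, $b\cdot\nabla_y u_h = (v\circ Z(h;\cdot) - v)/h$, and this last quantity converges strongly in $\lty{}$ to $c\cdot\nabla_y v$ because $v\in\mathrm{D}(c\cdot\nabla_y)$. Moreover each $u_h$ has zero average: averaging the identity and using $\ave{b\cdot\nabla_y u_h}=0$ together with Proposition \ref{Invariance} gives $\ave{u_h} = \ave{u\circ Z(h;\cdot)}/h - \ave{u}/h = (\ave{u}\circ Z(h;\cdot) - \ave{u})/h = 0$ since $\ave{u}=0$.

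Now apply the Poincar\'e inequality \eqref{EquPoincareConstant} to $u_h - u_{h'}$, which is a zero-average element of $\mathrm{D}(b\cdot\nabla_y)$: $\|u_h - u_{h'}\|_{\lty{}}\le C_P\|b\cdot\nabla_y(u_h-u_{h'})\|_{\lty{}}\to 0$ as $h,h'\to 0$, so $(u_h)_h$ is Cauchy and converges strongly to some $u_\star\in\lty{}$. Passing to the limit in $b\cdot\nabla_y u_h \to c\cdot\nabla_y v$ and using the closedness of $b\cdot\nabla_y$ we get $u_\star\in\mathrm{D}(b\cdot\nabla_y)\cap\ker\ave{\cdot}$ with $b\cdot\nabla_y u_\star = c\cdot\nabla_y v$, hence $u_\star = w$. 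On the other hand, strong convergence of $(u\circ Z(h;\cdot)-u)/h$ is precisely the statement that $u\in\mathrm{D}(c\cdot\nabla_y)$ with $c\cdot\nabla_y u = u_\star$. Combining, $c\cdot\nabla_y u = w$ and the bound $\|c\cdot\nabla_y u\|_{\lty{}} = \|w\|_{\lty{}}\le C_P\|c\cdot\nabla_y v\|_{\lty{}}$ follows.

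I expect the main obstacle to be the careful bookkeeping in the second step: verifying that the pushforward of the operator $b\cdot\nabla_y$ under the flow $Z(h;\cdot)$ is again $b\cdot\nabla_y$ — equivalently that $u\circ Z(h;\cdot)$ lies in $\mathrm{D}(b\cdot\nabla_y)$ with the stated image — requires the involution hypothesis $[c,b]=0$ and the bounded divergence of $c$, and must be done at the level of the (non-smooth, $W^{1,\infty}_{\mathrm{loc}}$) flows rather than for smooth fields. A clean way around the regularity issue is to first prove everything for smooth $v$ and smooth fields and then pass to the limit using the continuity of $\ave{\cdot}$, of $(b\cdot\nabla_y)^{-1}$ on $\ker\ave{\cdot}$, and the estimate just obtained, which is density-stable. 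Everything else is a routine repetition of the difference-quotient argument already used for Propositions \ref{Invariance} and \ref{DerCom}.
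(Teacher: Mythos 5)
Your proof is correct and follows essentially the same route as the paper: difference quotients along the flow $Z$ of $c$, the identity $b\cdot\nabla_y\,(u\circ Z(h;\cdot)) = v\circ Z(h;\cdot)$ coming from $[c,b]=0$ (no Jacobian factor enters this pointwise identity, only the $L^2$ bookkeeping), zero average of the quotients via Proposition \ref{Invariance}, and the Poincar\'e inequality \eqref{EquPoincareConstant}. The only cosmetic difference is that you run a Cauchy argument on the difference quotients and identify their strong limit with $(b\cdot\nabla_y)^{-1}(c\cdot\nabla_y v)$, whereas the paper bounds $\|u\circ Z(h;\cdot)-u\|_{\lty{}}/|h|$ directly by $C_P\|c\cdot\nabla_y v\|_{\lty{}}$ and records the same identification at the end.
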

\begin{proof}
For any $h \in \R ^\star$ we have $u _h := u \circ Z(h;\cdot) \in \mathrm{D}(b \cdot \nabla _y)$ and $b \cdot \nabla _y u _h = (b \cdot \nabla _y u)_h = v_h$. Therefore we deduce that $b \cdot \nabla _y (u_h - u) = v_h - v$. Since the average is commuting with the translations along the flow of $c$ we have
\[
\ave{u_h - u} = \ave{u_h} - \ave{u} = \ave{u}_h - \ave{u} = 0 - 0 = 0
\]
and we can apply the Poincar\'e inequality \eqref{EquPoincareConstant}
\[
\|u _h - u\|_{\lty{}} \leq C_P \|v_h - v\|_{\lty{}},\;h \in \R ^\star.
\]
Therefore $\sup _{h \in \R ^\star} \|u _h - u \|_{\lty{}} /|h| \leq C_P \|c \cdot \nabla _y v \|_{\lty{}}$ saying that $u \in \mathrm{D}(c \cdot \nabla _y)$ and $\|c \cdot \nabla _y u \|_{\lty{}} \leq C_P \|c \cdot \nabla _y v \|_{\lty{}}$. Notice also that we have
\[
b \cdot \nabla _y ( c \cdot \nabla _y u ) = c \cdot \nabla _y ( b \cdot \nabla _y u ) = c \cdot \nabla _y v
\]
with $\ave{c \cdot \nabla _y u } = c \cdot \nabla _y \ave{u} = 0$ saying that $(b \cdot \nabla _y ) ^{-1} (c \cdot \nabla _y v ) = c \cdot \nabla _y u$.
\end{proof}

\subsection{Regularity propagation under fast oscillating magnetic fields}
\label{RegPropagFastOsc}
We apply the previous general results to the operator \eqref{Equ17} acting on the phase space $(s,x,p) \in \R ^7$. We indicate a complete family of fields in involution with respect to $\partial _s + \frac{m \oc}{2} \teps \;^\perp x \cdot \nabla _p$. The reader can convince himself by direct computations.
\begin{pro}
\label{FamInv} The following fields are in involution with respect to $\partial _s + \frac{m \oc}{2} \teps \;^\perp x \cdot \nabla _p$
\[
c^1 \cdot \nabla _{(s,x,p)} = \partial _{x_1} - \frac{m \oc}{2} \tes \partial _{p_2},\;\;c^2 \cdot \nabla _{(s,x,p)} = \partial _{x_2} + \frac{m \oc}{2} \tes \partial _{p_1},\;\;c^3 \cdot \nabla _{(s,x,p)} = \partial _{x_3}
\]
\[
c^4 \cdot \nabla _{(s,x,p)} = \partial _{p_1},\;\;c^5 \cdot \nabla _{(s,x,p)} = \partial _{p_2},\;\;c^6 \cdot \nabla _{(s,x,p)} = \partial _{p_3}. 
\]
\end{pro}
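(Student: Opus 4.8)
The plan is to show that each field $c^i$ ($i\in\{1,\dots,6\}$) is in involution with $b:=\partial_s+\frac{m\oc}{2}\teps\;^\perp x\cdot\nabla_p$, i.e. that the Poisson brackets $[c^i,b]$ all vanish; by the result quoted from \cite{Arnold} this is equivalent to the commutation of the corresponding flows, which is exactly the hypothesis needed to apply Propositions~\ref{Invariance}--\ref{RegPropag}. Writing $b$ out in the coordinates $(s,x,p)\in\R^7$, its components are $b_s=1$, $b_{x_j}=0$, $b_{p_1}=\frac{m\oc}{2}\teps\,x_2$, $b_{p_2}=-\frac{m\oc}{2}\teps\,x_1$, $b_{p_3}=0$. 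Using the componentwise formula $[c^i,b]_k=L_{c^i}(b_k)-L_b(c^i_k)$, I would then run through the six cases.

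For $c^3=\partial_{x_3}$ and $c^4=\partial_{p_1},\,c^5=\partial_{p_2},\,c^6=\partial_{p_3}$ every bracket vanishes at once: these fields have constant components, so $L_b(c^i_k)=0$, while $L_{c^i}(b_k)=0$ because no component of $b$ depends on $x_3$ or on $p$. The only cases needing a line of computation are $c^1$ and $c^2$, whose momentum components carry the factor $\tes$. For $c^1=\partial_{x_1}-\frac{m\oc}{2}\tes\,\partial_{p_2}$ the sole component of $[c^1,b]$ that is not manifestly zero is the $p_2$ one, and there
\[
L_{c^1}(b_{p_2})-L_b(c^1_{p_2})=\partial_{x_1}\!\Big(-\tfrac{m\oc}{2}\teps\,x_1\Big)-\partial_s\!\Big(-\tfrac{m\oc}{2}\tes\Big)=-\tfrac{m\oc}{2}\teps+\tfrac{m\oc}{2}\teps=0,
\]
the cancellation being nothing but $\frac{d}{ds}\tes=\teps$; the computation for $c^2$ is identical with $p_1$ in place of $p_2$. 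This establishes the proposition.

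It is worth recording the structural reason behind these cancellations, which also makes the word \emph{complete} precise: the map $(s,x,p)\mapsto(s,x,q)$ with $q=p-\frac{m\oc}{2}\tes\;^\perp x$ is a global diffeomorphism of $\R^7$ straightening the flow \eqref{Equ18}, since $x$ and $q$ are exactly the prime integrals noted there. In the new coordinates $b$ becomes $\partial_s$, while by the chain rule $c^1,c^2,c^3$ become $\partial_{x_1},\partial_{x_2},\partial_{x_3}$ (differentiation at fixed $(s,q)$) and $c^4,c^5,c^6$ become $\partial_{q_1},\partial_{q_2},\partial_{q_3}$. These seven coordinate vector fields pairwise commute and form a frame of $\R^7$, whence both the involution relations and the fact that $\{c^1,\dots,c^6\}$ together with $b$ span the tangent space at every point. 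There is no genuinely hard step here; the only thing one must not overlook is the $s$-dependence of the coefficients of $b$, $c^1$ and $c^2$, which is precisely what brings the identity $\frac{d}{ds}\tes=\teps$ into play.
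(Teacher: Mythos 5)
Your proof is correct and matches the paper's intent: the paper offers no written proof, merely asserting that ``the reader can convince himself by direct computations,'' and your componentwise evaluation of the Poisson brackets $[c^i,b]$, hinging on $\frac{d}{ds}\tes = \teps$ for $c^1,c^2$, is exactly that computation. The closing remark that the change of variables $q = p - \frac{m\oc}{2}\tes\;^\perp x$ straightens all seven fields simultaneously is a nice (correct) addition consistent with the prime integrals already noted after \eqref{Equ18}.
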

\begin{pro}
\label{RegH1} Assume that $v \in \ker \ave{\cdot}$ such that $\nabla _x v, \nabla _p v \in (\ltpltxp{})^3$. Let us denote by $u$ the unique zero average solution of ${\cal T }u = v$. Then $\nabla _x u, \nabla _p u \in (\ltpltxp{}) ^3$ and
\[
\|\partial _{x_1} u\| \leq T \{ \|\partial _{x_1} v \| + m |\oc | \|\theta \|_{\linf{}} \|\partial _{p_2} v \| \}
\]
\[
\|\partial _{x_2} u\| \leq T \{ \|\partial _{x_2} v \| + m |\oc | \|\theta \|_{\linf{}} \|\partial _{p_1} v \| \}
\]
\[
\|\partial _{x_3} u\| \leq T  \|\partial _{x_3} v \|,\;\;\|\partial _{p_i}u \| \leq T \|\partial _{p_i} v \|,\;\;i \in \{1,2,3\}.
\]
\end{pro}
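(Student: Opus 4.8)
The plan is to derive this from the abstract regularity propagation statement of Proposition \ref{RegPropag}, applied to the operator $b\cdot\nabla_y={\cal T}$ of \eqref{Equ17} on the phase space $(s,x,p)\in\R^7$, with $c$ chosen to run successively through the six fields $c^1,\dots,c^6$ of Proposition \ref{FamInv}. First I would check that the standing hypotheses are in force here. The Poincar\'e inequality \eqref{EquPoincareConstant} holds for ${\cal T}$ with constant $C_P=T$, by Proposition \ref{P2}. Each $c^i$ is smooth (recall $\theta\in C^1$), is divergence free in $(s,x,p)$ (for $c^1$ and $c^2$ the single nonconstant coefficient depends only on $s$, not on the variable along which it would be differentiated, and $c^3,\dots,c^6$ are constant), has at most linear --- in fact bounded --- growth, and generates a globally defined flow (a translation for $c^3,\dots,c^6$, an explicit shear for $c^1,c^2$, since $s$ is constant along that flow). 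Finally $c^i$ is in involution with the field of ${\cal T}$ by Proposition \ref{FamInv}, and the hypothesis $\nabla_xv,\nabla_pv\in(\ltpltxp{})^3$ ensures $v\in\mathrm{D}(c^i\cdot\nabla_{(s,x,p)})$ for each $i$, since $\mathrm{div}_{(s,x,p)}(v\,c^i)=c^i\cdot\nabla_{(s,x,p)}v$ is a fixed linear combination, with bounded coefficients, of first order $x$- and $p$-derivatives of $v$.

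Granting this, Proposition \ref{RegPropag} applies with each $c=c^i$ and yields $u\in\mathrm{D}(c^i\cdot\nabla_{(s,x,p)})$ together with the estimate $\|c^i\cdot\nabla_{(s,x,p)}u\|\le T\,\|c^i\cdot\nabla_{(s,x,p)}v\|$. Taking $i=4,5,6$ this reads, after relabelling, $\|\partial_{p_j}u\|\le T\,\|\partial_{p_j}v\|$ for $j\in\{1,2,3\}$; taking $i=3$ it reads $\|\partial_{x_3}u\|\le T\,\|\partial_{x_3}v\|$. In particular $\nabla_pu\in(\ltpltxp{})^3$, $\partial_{x_3}u\in\ltpltxp{}$, and all the displayed estimates of the statement except those for $\partial_{x_1}u$ and $\partial_{x_2}u$ are obtained.

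For the two remaining bounds I would use the identities $\partial_{x_1}=c^1\cdot\nabla_{(s,x,p)}+\frac{m\oc}{2}\tes{}\,\partial_{p_2}$ and $\partial_{x_2}=c^2\cdot\nabla_{(s,x,p)}-\frac{m\oc}{2}\tes{}\,\partial_{p_1}$. Since $u$ already lies in the domains of $c^1\cdot\nabla_{(s,x,p)}$ and of $\partial_{p_2}$ by the previous step, it lies in the domain of $\partial_{x_1}$, and the triangle inequality together with $|\tes{}|\le\|\theta\|_{\linf{}}$ and the bound $\|\partial_{p_2}u\|\le T\,\|\partial_{p_2}v\|$ give
\[
\|\partial_{x_1}u\|\le\|c^1\cdot\nabla_{(s,x,p)}u\|+\frac{m|\oc|}{2}\|\theta\|_{\linf{}}\,\|\partial_{p_2}u\|\le T\,\|c^1\cdot\nabla_{(s,x,p)}v\|+\frac{m|\oc|}{2}\|\theta\|_{\linf{}}\,T\,\|\partial_{p_2}v\|.
\]
Inserting $\|c^1\cdot\nabla_{(s,x,p)}v\|\le\|\partial_{x_1}v\|+\frac{m|\oc|}{2}\|\theta\|_{\linf{}}\,\|\partial_{p_2}v\|$ and collecting terms gives exactly $\|\partial_{x_1}u\|\le T\{\|\partial_{x_1}v\|+m|\oc|\,\|\theta\|_{\linf{}}\,\|\partial_{p_2}v\|\}$; the $\partial_{x_2}$ estimate follows symmetrically with $c^2$ and $\partial_{p_1}$. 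Hence $\nabla_xu\in(\ltpltxp{})^3$ as well.

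I do not expect a genuine obstacle here: once the dictionary with Proposition \ref{RegPropag} is in place, the argument is a one-line invocation of that result for each $c^i$, plus two elementary triangle-inequality manipulations. The only point that deserves to be spelled out is the verification that the shear fields $c^1,c^2$ really satisfy the standing assumptions on $c$ (bounded divergence, globally defined flow, involution with the field of ${\cal T}$), which is immediate from their explicit expressions and from Proposition \ref{FamInv}.
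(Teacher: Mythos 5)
Your proposal is correct and follows essentially the same route as the paper: invoke Proposition \ref{RegPropag} (with Poincar\'e constant $C_P=T$ from Proposition \ref{P2}) for each of the six fields $c^1,\dots,c^6$ of Proposition \ref{FamInv}, read off the $\partial_{x_3}$ and $\partial_{p_i}$ bounds directly, and recover $\partial_{x_1}u$, $\partial_{x_2}u$ by writing them as $c^1\cdot\nabla_{(s,x,p)}u\pm\frac{m\oc}{2}\tes{}\,\partial_{p_{2,1}}u$ and applying the triangle inequality. Your explicit verification of the hypotheses on the fields $c^i$ (bounded divergence, global flow, involution, $v$ in the relevant domains) is only implicit in the paper but is exactly the intended justification.
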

\begin{proof}
By the hypotheses we know that $v \in \cap _{i = 1} ^6 \mathrm{D}(c^i \cdot \nabla _{(s,x,p)})$ and therefore Proposition \ref{RegPropag} implies
\[
u \in \cap _{i = 1} ^6 \mathrm{D}(c^i \cdot \nabla _{(s,x,p)}),\;\;\|c ^i \cdot \nabla _{(s,x,p)} u \|\leq T \|c ^i \cdot \nabla _{(s,x,p)} v \|,\;\;i \in \{1,...,6\}.
\]
In particular we have $\nabla _x u, \nabla _p u \in (\ltpltxp{})^3$
\[
\|\partial _{x_3} u\| \leq T  \|\partial _{x_3} v \|,\;\;\|\partial _{p_i}u \| \leq T \|\partial _{p_i} v \|,\;\;i \in \{1,2,3\}
\]
\begin{eqnarray}
\|\partial _{x_1} u \| & = & \|c ^1 \cdot \nabla _{(s,x,p)} u + \frac{m \oc}{2} \theta \;\partial _{p_2} u \| \nonumber \\
& \leq & \|c ^1 \cdot \nabla _{(s,x,p)} u \| + \frac{m |\oc|}{2} \|\theta \|_{\linf{}} \|\partial _{p_2} u \| \nonumber \\
& \leq & T \|c ^1 \cdot \nabla _{(s,x,p)} v \| + T \frac{m |\oc|}{2} \|\theta \|_{\linf{}} \|\partial _{p_2} v \| \nonumber \\
& \leq & T \{ \|\partial _{x_1} v \| + m |\oc| \|\theta \|_{\linf{}} \|\partial _{p_2} v \|\} \nonumber 
\end{eqnarray}
and similarly $\|\partial _{x_2} u \| \leq T \{ \|\partial _{x_2} v \| + m |\oc| \|\theta \|_{\linf{}} \| \partial _{p_1} v \|\}$.
\end{proof}

\section{Limit model}
\label{LimMod}
\indent

We are ready now to investigate the limit model of \eqref{Equ4} as $\eps \searrow 0$. We appeal to the method introduced in \cite{BosTraSin} for general transport problems (see also \cite{BosAsyAna}) which combines Hilbert expansion and average properties. A rigorous convergence result is obtained using the notion of two-scale convergence. We analyze the properties of the limit model, in particular we establish the energy conservation and justify the confinement around the magnetic lines. 
The terms in the Hilbert expansion \eqref{Equ14} satisfy
\begin{equation}
\label{Equ22} {\cal T}\fz = 0
\end{equation}
\begin{equation}
\label{Equ23} \partial _t \fz + \frac{p}{m} \cdot \nabla _x \fz + ( eE (t,x) + \oc \tes{} \;^\perp p ) \cdot \nabla _p \fz + {\cal T}\fo = 0.
\end{equation}
The equation \eqref{Equ22} says that at any time $t \in \R_+$ the function $(s,x,p) \to \fz(t, s,x,p)$ belongs to $\ker {\cal T}$ and therefore
\[
\fz (t,s,x,p) = \gz \left ( t,x,p - \frac{m\oc}{2} \tes{} \;^\perp x\right ).
\]
The time evolution equation for $\fz$ is obtained from \eqref{Equ23} after eliminating $\fo$. Thanks to Proposition \ref{P2} we have for any $t \in \R_+$
\[
\partial _t \fz + \frac{p}{m} \cdot \nabla _x \fz + ( eE (t,x) + \oc \tes{} \;^\perp p ) \cdot \nabla _p \fz \in \ran {\cal T} = \ker \ave{\cdot}.
\]
Therefore \eqref{Equ23} is equivalent to
\begin{equation}
\label{Equ24} \ave{\partial _t \fz + \frac{p}{m} \cdot \nabla _x \fz + ( eE (t,x) + \oc \tes{} \;^\perp p ) \cdot \nabla _p \fz} = 0,\;\;t \in \R_+.
\end{equation}
It is easily seen that $\ave{\partial _t \fz } = \partial _t \ave{\fz} = \partial _t \fz$. It remains to compute the averages of the derivatives with respect to $x$ and $p$. For simplifying the computations we assume that $\fz$ is smooth but it can be shown that the limit model which we will obtain still holds true in the distribution sense. 
\begin{lemma}
\label{L1} Assume that $f(s,x,p) = g\left ( x,q = p - \frac{m\oc}{2} \tes{} \;^\perp x\right )$ is smooth. Then we have
\begin{eqnarray}
& & \ave{\frac{p}{m}\cdot \nabla _x f }  =  \left (\frac{q}{m} + \frac{\oc}{2} \ave{\theta} \;^\perp x    \right ) \cdot \nabla _x g + \left ( \frac{\oc}{2} \ave{\theta} q + \frac{m\oc ^2}{4} \ave{\theta ^2} \;^\perp x \right ) \cdot \;^\perp \nabla _q g \nonumber \\
& = & \left (\frac{p}{m} - \frac{\oc}{2} ( \theta - \ave{\theta}) \;^\perp x \right) \cdot \nabla _x f + \left (\frac{\oc}{2} ( \theta - \ave{\theta}) \;^\perp p + \frac{m \oc ^2 }{4} ( 2 \theta \ave{\theta} - \ave{\theta ^2} - \theta ^2) \;^{\perp \perp} x      \right ) \cdot \nabla _p f       \nonumber 
\end{eqnarray}
and
\begin{eqnarray}
 \ave{(eE(x) + \oc \theta \;^\perp p ) \cdot \nabla _p f } & = & eE \cdot \nabla _q g + \left ( \oc \ave{\theta} \;^\perp q  + \frac{m\oc ^2}{2} \ave{\theta ^2} \;^{\perp \perp } x \right ) \cdot \nabla _q g \nonumber \\
& = & \left (e E + \oc \ave{\theta} \;^\perp p + \frac{m \oc ^2}{2} ( \ave{\theta ^2} - \theta \ave{\theta}) \;^{\perp \perp} x     \right ) \cdot \nabla _p f. \nonumber  
\end{eqnarray}
\end{lemma}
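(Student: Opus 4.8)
The plan is to reduce everything to the explicit formula \eqref{EquBelong} for the average operator, exploiting that $q = p - \frac{m\oc}{2}\tes{}\;^\perp x$ is a prime integral of the characteristic flow \eqref{Equ17Bis}. First I would use the chain rule to express the transport terms acting on $f$ in terms of $g$ and its derivatives. Since $q_1 = p_1 - \frac{m\oc}{2}\tes{}\,x_2$, $q_2 = p_2 + \frac{m\oc}{2}\tes{}\,x_1$, $q_3 = p_3$, one checks componentwise that $\nabla_p f = (\nabla_q g)(x,q)$ and $(\nabla_x g)(x,q) = \nabla_x f - \frac{m\oc}{2}\tes{}\;^\perp\nabla_p f$, equivalently $\nabla_x f = (\nabla_x g)(x,q) + \frac{m\oc}{2}\tes{}\;^\perp(\nabla_q g)(x,q)$. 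Plugging these in and using $a\cdot{}^\perp b = -{}^\perp a\cdot b$ gives $\frac{p}{m}\cdot\nabla_x f = \frac{p}{m}\cdot(\nabla_x g)(x,q) - \frac{\oc}{2}\tes{}\;^\perp p\cdot(\nabla_q g)(x,q)$, and $(eE(x)+\oc\tes{}\;^\perp p)\cdot\nabla_p f = (eE(x)+\oc\tes{}\;^\perp p)\cdot(\nabla_q g)(x,q)$.

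Next I would apply the averaging formula. By \eqref{EquBelong}, $\ave{u}(s,x,p)$ is obtained by replacing $(s,p)$ with $\bigl(\tau,\; p + \frac{m\oc}{2}(\theta(\tau)-\tes{})\;^\perp x\bigr)$ and integrating in $\tau$ over $[0,T]$, divided by $T$. The crucial point is that under this substitution the argument $q$ of $g$ is left unchanged (this is exactly the statement that $q$ is a prime integral), so the factors $\nabla_x g(x,q)$ and $\nabla_q g(x,q)$ come out of the $\tau$-integral while $p$ inside $\frac{p}{m}$ or $^\perp p$ becomes $q + \frac{m\oc}{2}\theta(\tau)\;^\perp x$. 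Carrying out the $\tau$-integration then produces precisely $\ave{\theta}$ and $\ave{\theta^2}$, yielding the first equality in each of the two displayed identities.

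Finally I would rewrite the answer in terms of $f$, $\nabla_x f$, $\nabla_p f$ by undoing the substitutions of the first step: $\nabla_q g = \nabla_p f$, $\;^\perp q = \;^\perp p - \frac{m\oc}{2}\tes{}\;^{\perp\perp}x$, and $\nabla_x g = \nabla_x f - \frac{m\oc}{2}\tes{}\;^\perp\nabla_p f$, together with the elementary identities $a\cdot{}^\perp b = -{}^\perp a\cdot b$ and ${}^{\perp\perp}x = -(x_1,x_2,0)$, to collect the ${}^\perp p$ and ${}^{\perp\perp}x$ contributions. For instance, in the electric-force term the ${}^{\perp\perp}x$ coefficient $\frac{m\oc^2}{2}\ave{\theta^2}$ picks up an extra $-\frac{m\oc^2}{2}\ave{\theta}\tes{}$ from rewriting $\;^\perp q$, producing $\frac{m\oc^2}{2}(\ave{\theta^2}-\theta\ave{\theta})$ as claimed. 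The computation is essentially bookkeeping; the only genuine ingredients are the invariance of $q$ under the averaging substitution (which lets the $g$-factors leave the integral) and careful sign handling for $\;^\perp$ and $\;^{\perp\perp}$. The main place an error could creep in is the passage between the $(x,p)$-form and the $(x,q)$-form of the derivatives, so I would verify component-by-component that $\partial_{x_1}g = \partial_{x_1}f - \frac{m\oc}{2}\tes{}\,\partial_{p_2}f$ and $\partial_{x_2}g = \partial_{x_2}f + \frac{m\oc}{2}\tes{}\,\partial_{p_1}f$ before assembling the vector identities, and likewise double-check the second equalities by expanding one or two components directly.
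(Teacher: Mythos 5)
Your proposal is correct and follows essentially the same route as the paper: express the derivatives of $f$ via the chain rule in terms of $\nabla_x g$, $\nabla_q g$ (which are invariant along the flow \eqref{Equ17Bis} and hence factor out of the average), then compute $\ave{p}$ and $\ave{\theta p}$ explicitly from \eqref{EquBelong} to produce $\ave{\theta}$ and $\ave{\theta^2}$, and finally substitute back $\nabla_q g = \nabla_p f$, $^\perp q = {}^\perp p - \tfrac{m\oc}{2}\tes{}\,^{\perp\perp}x$ to get the $(x,p)$-form. The only differences are cosmetic (you transfer the $^\perp$ onto $p$ rather than onto $\nabla_q$, and you spell out the back-substitution that the paper leaves implicit), and your sign checks are consistent with the paper's conventions.
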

\begin{proof}
We have, with the notation $^\perp \nabla _q = (\partial _{q_2}, - \partial _{q_1}, 0)$
\[
\nabla _x f = \nabla _x g + \frac{m\oc}{2} \tes{} \;^\perp \nabla _q g
\]
and therefore 
\[
\ave{\frac{p}{m}\cdot \nabla _x f } = \ave{p} \cdot \frac{\nabla _x g}{m} + \frac{\oc}{2} \ave{p \theta} \cdot \;^\perp \nabla _q g
\]
since the derivatives of $g$ are constant along the characteristic flow \eqref{Equ17Bis}. By the definition of the average operator we have
\begin{eqnarray}
\ave{p} & = & \frac{1}{T} \int _0 ^T \left \{p - \frac{m \oc}{2} \tes{} \;^\perp x + \frac{m\oc}{2}\theta (\tau) \;^\perp x   \right \}\;\mathrm{d}\tau \nonumber \\
& = & q + \frac{m\oc}{2} \ave{\theta} \;^\perp x \nonumber
\end{eqnarray}
where $q = p - \displaystyle\frac{m\oc}{2} \tes{} \;^\perp x$. Similarly we obtain
\begin{eqnarray}
\ave{p\theta} & = & \frac{1}{T} \int _0 ^T \left \{ p - \frac{m \oc}{2} \tes{} \;^\perp x + \frac{m\oc}{2}\theta (\tau) \;^\perp x    \right \}\theta (\tau) \;\mathrm{d}\tau \nonumber \\
& = & q \ave{\theta} + \frac{m\oc}{2} \ave{\theta ^2} \;^\perp x \nonumber 
\end{eqnarray}
and finally 
\[
\ave{\frac{p}{m}\cdot \nabla _x f } = \left (\frac{q}{m} + \frac{\oc}{2} \ave{\theta} \;^\perp x    \right ) \cdot \nabla _x g + \left ( \frac{\oc}{2} \ave{\theta} q + \frac{m\oc ^2}{4} \ave{\theta ^2} \;^\perp x \right ) \cdot \;^\perp \nabla _q g.
\]
The second statement follows easily observing that $\nabla _p f = \nabla _q g$ and therefore 
\begin{eqnarray}
\ave{(eE(x) + \oc \theta \;^\perp p ) \cdot \nabla _p f } & = & eE(x) \cdot \nabla _q g + \oc \ave{\theta \;^\perp p} \cdot \nabla _q g \nonumber \\
& = &  eE(x) \cdot \nabla _q g + \left ( \oc \ave{\theta} \;^\perp q  + \frac{m\oc ^2}{2} \ave{\theta ^2} \;^{\perp \perp } x \right ) \cdot \nabla _q g.\nonumber
\end{eqnarray}
\end{proof}
Combining the previous computations yields the transport equation
\begin{equation}
\label{Equ25} \partial _t \gz + \left (\frac{q}{m} + \frac{\oc}{2} \ave{\theta} \;^\perp x     \right )\cdot \nabla _x \gz + \left ( e E + \frac{\oc}{2} \ave{\theta} \;^\perp q + \frac{m\oc ^2}{4} \ave{\theta ^2} \;^{\perp \perp } x   \right ) \cdot \nabla _q \gz= 0
\end{equation}
which is exactly \eqref{Equ10}. Performing the change of unknown $\fz (t,s,x,p) = \gz (t,x, p - \frac{m\oc}{2} \tes{} \;^\perp x)$ leads to
\begin{eqnarray}
\label{Equ26} & & \partial _t \fz + \left (\frac{p}{m} - \frac{\oc}{2} ( \tes{} - \ave{\theta})\;^\perp x    \right ) \cdot \nabla _x \fz   \\
& + & \left (eE(t,x) + \frac{\oc}{2} ( \tes{} + \ave{\theta}) \;^\perp p + \frac{m\oc ^2}{4} ( \ave{\theta ^2} - \theta ^2 (s)) \;^{\perp \perp} x    \right ) \cdot \nabla _p \fz = 0.\nonumber
\end{eqnarray}
Based on the concept of two-scale convergence, introduced in \cite{Guetseng} and developed in \cite{Allaire92} we prove a two-scale convergence result of the solutions in \eqref{Equ4} towards \eqref{Equ26}, supplemented by an appropriate initial condition.
\begin{defi}
\label{TwoScaleConv} Let $(f^n (t,x,p))_{n\in \N ^\star}$ be a sequence in $L^2([0,I];\ltxp{})$. We say that $(f^n)_n$ two-scale converges towards some function $f^0 \in L^2([0,I]; \ltpltxp{})$ iff we have
\[
\limn \int _0 ^I \!\!\int_{\R^3}\!\!\int _{\R^3} \!\!f ^n (t,x,p) \varphi (t, \s, x, p)\;\mathrm{d}p\mathrm{d}x\mathrm{d}t = \int _0 ^I \frac{1}{T} \intsxp{(f^0 \varphi )(t,s,x,p)}\mathrm{d}t
\]
for any test function $\varphi \in L^2([0,I];C^0 _{\#}(\R_s; \ltxp{}))$ (here $C^0 _{\#} (\R_s; X)$ stands for the set of continuous $T$ periodic functions with values in the normed linear space $X$).
\end{defi}
Adapting the arguments in \cite{Allaire92} we obtain
\begin{pro}
\label{CompactTwoScales} Let $(\fe (t,x,p))_{\eps >0}$ be a bounded family in $L^2([0,I];\ltxp{})$. Then there is a sequence $\eps _n \searrow 0$ and a function $f^0 \in L^2([0,I]; \ltpltxp{})$ such that $(f ^{\eps _n})_n$ two-scale converges towards $f^0$.
\end{pro}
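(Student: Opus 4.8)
The plan is to run the classical compactness argument for two-scale convergence (Nguetseng, Allaire) in the functional setting of Definition \ref{TwoScaleConv}. Introduce the Banach space of admissible test functions $\mathcal{V} := L^2([0,I];C^0_{\#}(\R_s;\ltxp{}))$ and, for each $\eps >0$, the linear form
\[
L_\eps(\varphi) = \int _0 ^I \!\!\int_{\R^3}\!\!\int _{\R^3} \!\!\fe (t,x,p)\, \varphi (t, \s, x, p)\;\mathrm{d}p\mathrm{d}x\mathrm{d}t,\qquad \varphi \in \mathcal{V}.
\]
First I would record the elementary a priori bound: since for fixed $t$ one has $\|\varphi(t,t/\eps,\cdot,\cdot)\|_{\ltxp{}} \leq \sup_{s}\|\varphi(t,s,\cdot,\cdot)\|_{\ltxp{}}$, squaring and integrating in $t$ gives $\|\varphi(\cdot,\cdot/\eps,\cdot,\cdot)\|_{L^2([0,I]\times\R^3\times\R^3)} \leq \|\varphi\|_{\mathcal V}$, uniformly in $\eps$; combined with Cauchy--Schwarz and the uniform bound $M := \sup_{\eps>0}\|\fe\|_{L^2([0,I];\ltxp{})} < +\infty$ this yields $|L_\eps(\varphi)| \leq M\,\|\varphi\|_{\mathcal V}$ for all $\eps$ and all $\varphi\in\mathcal V$.

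The second step is the \emph{mean value property}: for every $\varphi\in\mathcal V$,
\[
\lime \int _0 ^I \!\!\int_{\R^3}\!\!\int _{\R^3} \!\!|\varphi (t, \s, x, p)|^2\;\mathrm{d}p\mathrm{d}x\mathrm{d}t = \int _0 ^I \frac{1}{T}\intsxp{|\varphi(t,s,x,p)|^2}\mathrm{d}t.
\]
I would first prove this for finite sums $\varphi(t,s,x,p) = \sum_{k} a_k(t,x,p)\,b_k(s)$ with $b_k \in C^0_{\#}(\R)$ and $a_k \in L^2([0,I];\ltxp{})$: expanding the square and invoking the scalar equidistribution lemma $\lim_{\eps\to0}\int_0^I g(t)\,b_k(t/\eps)\,b_\ell(t/\eps)\,\mathrm{d}t = \frac{1}{T}\left(\int_0^T b_k(s) b_\ell(s)\,\mathrm{d}s\right)\int_0^I g(t)\,\mathrm{d}t$, valid for $g\in L^1([0,I])$ (proved by approximating $g$ by step functions and averaging the $T$-periodic product $b_kb_\ell$ over long intervals), gives the claim for such $\varphi$. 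Since finite sums of this type are dense in $C^0_{\#}(\R_s;\ltxp{})$, hence in $\mathcal V$, and since by Step 1 the map $\varphi\mapsto\|\varphi(\cdot,\cdot/\eps,\cdot,\cdot)\|_{L^2}$ is $1$-Lipschitz uniformly in $\eps$, the convergence extends to all $\varphi\in\mathcal V$.

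Third, I would extract the subsequence. The space $\ltxp{}$ being separable, so are $C^0_{\#}(\R_s;\ltxp{})$ and $\mathcal V$; pick a countable dense subset $\{\varphi_j\}_{j\in\N}$ of $\mathcal V$. For each fixed $j$ the scalar sequence $(L_\eps(\varphi_j))_{\eps}$ is bounded by $M\|\varphi_j\|_{\mathcal V}$, so a diagonal extraction produces a sequence $\eps_n\searrow 0$ along which $L_{\eps_n}(\varphi_j)$ converges for every $j$; by the uniform estimate of Step 1, $L_{\eps_n}(\varphi)$ then converges for every $\varphi\in\mathcal V$, to a limit $L(\varphi)$ with $|L(\varphi)|\leq M\|\varphi\|_{\mathcal V}$. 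Using the mean value property of Step 2 to pass to the limit in $|L_{\eps_n}(\varphi)|\leq M\,\|\varphi(\cdot,\cdot/\eps_n,\cdot,\cdot)\|_{L^2}$ gives the sharper bound $|L(\varphi)| \leq M\,T^{-1/2}\,\|\varphi\|_{L^2([0,I];\ltpltxp{})}$, so $L$ extends uniquely to a bounded linear functional on the Hilbert space $L^2([0,I];\ltpltxp{})$. By the Riesz representation theorem there is $\fz \in L^2([0,I];\ltpltxp{})$ (the membership in this space being automatic from the construction) such that $L(\varphi) = \int_0^I \frac{1}{T}\intsxp{(\fz\varphi)(t,s,x,p)}\mathrm{d}t$ for every such $\varphi$; restricting to $\varphi\in\mathcal V$ this is exactly the assertion that $(f^{\eps_n})_n$ two-scale converges towards $\fz$. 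The only genuinely non-formal point is the mean value property of Step 2, and within it the passage from scalar $T$-periodic profiles to $\ltxp{}$-valued ones: one must check that finite sums $\sum_k a_k(t,x,p)\,b_k(s)$ with $t$-measurable coefficients are dense in $C^0_{\#}(\R_s;\ltxp{})$, so that the reduction to the scalar equidistribution lemma is legitimate; the a priori bound, the diagonal extraction and the Riesz step are routine.
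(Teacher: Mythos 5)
Your proposal is correct and is essentially the paper's approach: the paper gives no written proof of Proposition \ref{CompactTwoScales}, simply stating that it follows by adapting the arguments of Allaire's two-scale compactness theorem, and your argument is precisely that classical Nguetseng--Allaire scheme (uniform bound on the functionals $L_\eps$, mean value property via tensor-product test functions and the scalar equidistribution lemma, diagonal extraction, and Riesz representation in $L^2([0,I];\ltpltxp{})$) adapted to the periodic fast variable $s=t/\eps$. The points you flag as non-formal (density of sums $\sum_k a_k(t,x,p)b_k(s)$ in $L^2([0,I];C^0_{\#}(\R_s;\ltxp{}))$ and the uniform-in-$\eps$ Lipschitz bound used to extend the mean value property) are indeed the only delicate steps and are handled correctly.
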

\begin{proof} (of Theorem \ref{MainResultWeak}) 
We use the weak formulation of \eqref{Equ4} with the test function $\eta (t) \varphi(\s, x, p)$ where $\eta \in C^1 _c (\R_+)$ and $\varphi \in C^1 _c (\R/T\Z \times \R ^3 \times \R^3)$. We obtain
\begin{eqnarray}
& & - \eta (0) \intxp{\fin (x,p) \varphi (0,x,p)} - \int _{\R_+}\eta ^{\;\prime} (t) \intxp{\fe (t,x,p) \varphi (\s, x, p)}\mathrm{d}t \nonumber \\
& & - \int _{\R_+} \!\!\!\eta  \intxp{\fe (t,x,p) \left [ \frac{1}{\eps} \partial _s \varphi (\s, x, p) + \frac{p}{m} \cdot \nabla _x \varphi (\s,x,p)\right ] }\mathrm{d}t \nonumber \\
& & - \int _{\R_+} \!\!\!\eta  \intxp{\fe (t,x,p) \left [eE + \frac{m\oc}{2\eps} \tept{} \;^\perp x + \oc \tet{}\;^\perp p \right] \cdot \nabla _p \varphi (\s, x, p)  }\mathrm{d}t \nonumber \\
& & = 0.\nonumber
\end{eqnarray}
Multiplying the previous formulation by $\eps$, one gets by two-scale convergence (after extraction eventually) 
\[
\int _{\R_+} \eta (t) \frac{1}{T} \intsxp{f^0 (t,s,x,p) {\cal T}\varphi (s,x,p)}\mathrm{d}t = 0
\]
saying that $f^0(t,\cdot, \cdot, \cdot) \in \ker {\cal T}$ for any $t \in \R_+$. We use now the same weak formulation, but with $\varphi \in C^1 _c (\R/T\Z \times \R ^3 \times \R^3) \cap \ker {\cal T}$. For example take $\varphi (s,x,p) = \psi (x, p - \frac{m \oc }{2} \tes{} \;^\perp x)$ with $\psi \in C^1 _c ( \R ^3 \times \R ^3)$, let us say $\supp \psi \subset \{ (x,q) : \max \{ |x|, |q|\} \leq R\}$. In this case $\supp \varphi \subset \{ (s,x,p) : \max \{|x|, |p|\} \leq ( 1 + m |\oc |\|\theta \|_{\linf{}} /2 ) R \}$. Since $\varphi (s,x,p) \in \ker {\cal T}$ we have
\[
\frac{1}{\eps } \partial _s \varphi (\s, x, p) + \frac{m\oc}{2\eps} \tept{} \;^\perp x \cdot \nabla _p \varphi (\s, x, p) = 0
\]
and the weak formulation becomes
\begin{eqnarray}
\label{Equ27} && - \eta (0) \intxp{\fin (x,p) \varphi (0,x,p)} - \int _{\R_+}\eta ^{\;\prime} (t) \intxp{\fe (t,x,p) \varphi (\s, x, p)}\mathrm{d}t \nonumber \\
& & - \int _{\R_+} \eta (t) \intxp{\fe  \left [ \frac{p}{m} \cdot \nabla _x \varphi (\s,x,p) + \left ( eE + \oc \tet{}\;^\perp p \right) \cdot \nabla _p \varphi (\s, x, p)  \right ]}\mathrm{d}t \nonumber \\
& & = 0.
\end{eqnarray}
As before, by two-scale convergence one gets
\begin{eqnarray}
& & \lime{} \int _{\R_+}\eta ^{\;\prime} (t) \intxp{\fe (t,x,p) \varphi (\s, x, p)}\mathrm{d}t \nonumber \\
&  & =  \int _{\R_+}\eta ^{\;\prime} (t) \frac{1}{T}\intsxp{\fz(t,s,x,p) \varphi (s,x,p) }\mathrm{d}t. \nonumber
\end{eqnarray}
Similarly one gets
\begin{eqnarray}
& & \lime \int _{\R_+} \!\!\! \eta  \intxp{\fe \left [ \frac{p}{m} \cdot \nabla _x \varphi (\s,x,p) + \left ( eE + \oc \tet{}\;^\perp p \right) \cdot \nabla _p \varphi (\s, x, p)  \right ]} \mathrm{d}t \nonumber \\
& & = \int _{\R_+} \!\frac{\eta}{T} \intsxp{\fz (t,s,x,p) \left [ \frac{p}{m} \cdot \nabla _x \varphi  + \left ( eE + \oc \tes{}\;^\perp p \right) \cdot \nabla _p \varphi  \right ] }\mathrm{d}t. \nonumber
\end{eqnarray}
Therefore  \eqref{Equ27} implies
\begin{eqnarray}
\label{EquFormWeak}
& & - \eta (0) \intxp{\fin (x,p) \varphi (0,x,p) }- \int_{\R_+} \frac{\eta ^{\;\prime} (t)}{T} \intsxp{\fz (t,s,x,p) \varphi (s,x,p)}\mathrm{d}t \nonumber \\
& = & \int _{\R _+} \frac{\eta (t)}{T} \intsxp{\fz (t,s,x,p)\left [ \frac{p}{m} \cdot \nabla _x \varphi  + \left ( eE + \oc \tes{}\;^\perp p \right) \cdot \nabla _p \varphi   \right ] }\mathrm{d}t.
\end{eqnarray}
We transform the term involving the initial condition. Since $\varphi \in \ker {\cal T}$ we have for any $s \in \R$
\[
\varphi (0,x,p) = \varphi \left (s, x, p - \frac{m\oc}{2}\theta (0) \;^\perp x +   \frac{m\oc}{2}\tes{} \;^ \perp x   \right )
\]
implying that 
\begin{eqnarray}
\label{EquICTerm} T \!\!\intxp{\fin  \varphi (0,x,p)}
& \!\!\!\!=\!\!\!\! & \!\!\!\intsxp{\fin (x,p) \varphi \left  (s, x, p - \frac{m\oc}{2}( \theta (0) - \tes{}) \;^ \perp x  \right )} \nonumber \\
& \!\!\!\!=\!\!\!\! & \!\!\!\intsxp{\fin \left (x, p - \frac{m\oc}{2} (\tes{} - \theta (0))\;^ \perp x     \right )\varphi (s,x,p)}.\nonumber \\
&& 
\end{eqnarray}
We transform now the right hand side of \eqref{EquFormWeak} thanks to Lemma \ref{L1}. Indeed, since $f(t, \cdot, \cdot, \cdot) \in \ker {\cal T}$, for any $t \in \R_+$, we have by the variational formulation of the average operator in Proposition \ref{P1}
\begin{eqnarray}
\label{EquRHS} & & \intsxp{\fz (t)\left [ \frac{p}{m} \cdot \nabla _x \varphi  + \left ( eE + \oc \tes{}\;^\perp p \right) \cdot \nabla _p \varphi   \right ]} \nonumber \\
& =  & \intsxp{\fz \ave{\frac{p}{m} \cdot \nabla _x \varphi  + \left ( eE + \oc \tes{}\;^\perp p \right) \cdot \nabla _p \varphi  }}\nonumber \\
& = &   \intsxp{\fz (t)  \left [\left (\frac{p}{m} - \frac{\oc}{2} ( \tes{} - \ave{\theta})\;^\perp x    \right ) \cdot \nabla _x \varphi  \right.  \nonumber \\
& + &    \left. \left (eE + \frac{\oc}{2} ( \tes{} + \ave{\theta}) \;^\perp p + \frac{m\oc ^2}{4} ( \ave{\theta ^2} - \theta ^2 (s)) \;^{\perp \perp} x    \right ) \cdot \nabla _p \varphi     \right ]}.
\end{eqnarray}
Combining now \eqref{EquFormWeak}, \eqref{EquICTerm}, \eqref{EquRHS} implies that $\fz$ is a weak solution of the transport problem \eqref{EquNew3}, \eqref{EquNew4}. Equivalently, the function $\gz$ such that $\fz (t,s,x,p) = \gz (t,x,p - m \oc /2 \;\tes{} \;^\perp x )$ is a weak solution of the transport equation \eqref{Equ25}, supplemented  by the initial condition 
\begin{equation}
\label{Equ31} \gz ( 0,x,q) = \fin \left ( x, q + \frac{m\oc}{2} \theta (0) \;^\perp x \right ).
\end{equation}
\end{proof}
Notice that the model \eqref{Equ25}, \eqref{Equ31} is posed in a six dimensional phase space whereas the model \eqref{EquNew3}, \eqref{EquNew4} acts on a seven dimensional phase space. Thus, at least for the numerical point of view it is preferable to appeal to \eqref{Equ25}, \eqref{Equ31}. 

It is interesting to observe that, as $\eps \searrow 0$, the kinetic energy of $\fz(t, t /\eps, \cdot, \cdot) \approx \fe (t, \cdot, \cdot)$ can be decomposed into kinetic energy and elastic energy associated to the density $\gz$
\[
W^0 (t) = \intxq{\left \{ \frac{1}{2m} \left | q + \frac{m \oc}{2} \ave{\theta} \;^\perp x    \right |^ 2 + \frac{m \oc ^2}{4} ( \ave{\theta ^2} - \ave{\theta} ^2 ) \frac{|^\perp x|^2}{2} \right \}\gz(t,x,q)}.
\]
Moreover, when the electric potential solves the Poisson equation corresponding to the concentration
\[
\intp{\fz(t, t/\eps, x, p)} = \intp{\gz\left (t, x, p - \frac{m \oc}{2}\tet{} \;^\perp x   \right )} = \intq{\gz(t,x,q)}
\]
the total energy is preserved. In order to simplify our computations we work with smooth solutions.
\begin{pro}
\label{ComSuppLoc}
Assume that $\fin$ is nonnegative, $\fin \in W^{1, \infty} (\R ^3 \times \R ^3)$ such that 
\[
\intxp{( 1 + |^ \perp x | ^2 + |p|^2) \fin (x,p)} < +\infty .
\]
i) If $E \in L^1 _{\mathrm{loc}} ( \R_+; W^{1, \infty} (\R^3))$ is a given electric field then
\begin{equation}
\intxp{\frac{|p|^2}{2m}\fz(t,\s, x, p) } \rightharpoonup W^0 (t)\;\mbox{ weakly }\;\star \;\mbox{ in }\;  L^\infty _{\mathrm{loc}} ( \R_+)\;\mbox{ as }\; \eps \searrow 0.
\end{equation}
ii) If $\fin $ has compact support and $E \in L^1 _{\mathrm{loc}} ( \R_+; W^{1, \infty} (\R^3))$, then there is a continuous nondecreasing function $R$ depending on $t$, such that
\[
\supp \gz (t, \cdot, \cdot) \subset \{(x,q) \;:\; |x| \leq R(t),\;\;|q| \leq R(t)   \},\;\;t \in \R_+.
\]
iii) If $E$ belongs to $ \in L^1 _{\mathrm{loc}} ( \R_+; W^{1, \infty} (\R^3))$ and  solves the Poisson equation
\begin{equation}
\label{Equ33} E = - \nabla _x \phi,\;\;
- \Delta _x \phi (t) = \frac{e}{\eps _0} \left ( \int _{\R ^3} \gz (t,x,q) \;\mathrm{d}q - n _0 (x) \right )
\end{equation}
such that $E(0,\cdot) \in (L^2(\R^3))^3$, 
then 
\[
\frac{d}{dt} \left \{ W^0  + \frac{\eps _0}{2} \intx{|E|^2}  \right \} = 0.
\]
\end{pro}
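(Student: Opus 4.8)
The plan is to reduce all three statements to properties of the characteristic flow of the six–dimensional transport equation \eqref{Equ25} for $\gz$, working (as announced) with smooth solutions. Throughout I use that the field $V = (V_x, V_q)$ driving \eqref{Equ25}, namely $V_x = \frac{q}{m} + \frac{\oc}{2}\ave{\theta}\,^\perp x$ and $V_q = eE(t,x) + \frac{\oc}{2}\ave{\theta}\,^\perp q + \frac{m\oc ^2}{4}\ave{\theta ^2}\,^{\perp \perp} x$, is divergence free in $(x,q)$ (each term is), has at most linear growth, and is Lipschitz in $(x,q)$ whenever $E\in L^1_{\mathrm{loc}}(\R_+; W^{1,\infty})$; hence \eqref{Equ25} has a unique smooth solution, constant along its characteristics, which are measure preserving.

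\textbf{Statement ii).} Along a characteristic $(X,Q)$ one has $\frac{d}{dt}(|X|+|Q|) \le |\dot X| + |\dot Q| \le C(m,\oc,\theta)(1 + |X| + |Q|) + e\|E(t,\cdot)\|_{\linf}$, so Gronwall's lemma gives $|X(t)|+|Q(t)| \le (R_0+1)\exp\!\left(\int_0^t [C + e\|E(s,\cdot)\|_{\linf}]\,\mathrm{d}s\right)$ whenever $|X(0)|+|Q(0)| \le R_0$. Since $\gz$ is constant along the characteristics and $\gin(x,q) = \fin(x, q + \frac{m\oc}{2}\theta(0)\,^\perp x)$ has the same compact support property as $\fin$, the support of $\gz(t,\cdot,\cdot)$ lies in the ball of radius $R(t)$ equal to the right-hand side above, which is continuous and nondecreasing in $t$ because $E\in L^1_{\mathrm{loc}}$.

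\textbf{Statement i).} From $\fz(t,s,x,p) = \gz(t,x, p - \frac{m\oc}{2}\theta(s)\,^\perp x)$ and the change of variable $q = p - \frac{m\oc}{2}\theta(t/\eps)\,^\perp x$ (for fixed $t,x,\eps$), expanding $|q + \frac{m\oc}{2}\theta(t/\eps)\,^\perp x|^2$ yields the exact identity
\[
\intxp{\frac{|p|^2}{2m}\fz(t,t/\eps, x, p)} = A(t) + \frac{\oc}{2}\,\theta(t/\eps)\,B(t) + \frac{m\oc ^2}{8}\,\theta ^2(t/\eps)\,C(t),
\]
with $A = \intxq{\frac{|q|^2}{2m}\gz}$, $B = \intxq{(q\cdot{}^\perp x)\,\gz}$, $C = \intxq{|{}^\perp x|^2\,\gz}$. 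A Gronwall estimate for $t\mapsto\intxq{(1 + |{}^\perp x|^2 + |q|^2)\,\gz}$ along \eqref{Equ25} (using that $V$ is divergence free with linear growth, that $^\perp q\cdot q = 0$, and that $|eE\cdot q| \le e\|E(t)\|_{\linf}(1 + |q|^2)$) shows that $A,B,C$ are finite and locally bounded on $\R_+$, the initial value being finite because $\fin$ carries the assumed second moment. Hence the displayed family is bounded in $L^\infty_{\mathrm{loc}}(\R_+)$, and since $\theta(\cdot/\eps)\rightharpoonup\ave{\theta}$ and $\theta ^2(\cdot/\eps)\rightharpoonup\ave{\theta ^2}$ weakly $\star$ in $L^\infty_{\mathrm{loc}}(\R_+)$ (averaging of periodic profiles under fast oscillation), it converges weakly $\star$ to $A + \frac{\oc}{2}\ave{\theta}B + \frac{m\oc ^2}{8}\ave{\theta ^2}C$; a direct expansion of $\chi$ shows this is precisely $W^0(t)$.

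\textbf{Statement iii).} The heart of the matter is the pointwise identity $V\cdot\nabla_{(x,q)}\chi = eE\cdot V_x$: indeed $\nabla_q\chi = \frac{q}{m} + \frac{\oc}{2}\ave{\theta}\,^\perp x = V_x$ while $\nabla_x\chi = -\frac{\oc}{2}\ave{\theta}\,^\perp q - \frac{m\oc ^2}{4}\ave{\theta ^2}\,^{\perp \perp} x = eE - V_q$, so $V_x\cdot\nabla_x\chi + V_q\cdot\nabla_q\chi = V_x\cdot eE$. Multiplying \eqref{Equ25} by $\chi$, integrating over $(x,q)$ and integrating by parts — boundary terms vanishing by statement ii) — gives $\frac{d}{dt}W^0 = \intxq{eE\cdot V_x\,\gz} = \intx{eE\cdot J}$ with $J := \frac{1}{m}\intq{q\,\gz} + \frac{\oc}{2}\ave{\theta}\,^\perp x\,\intq{\gz}$. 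Integrating \eqref{Equ25} in $q$ gives the continuity equation $\partial_t n + \Div J = 0$, $n = \intq{\gz}$; hence $\frac{d}{dt}W^0 = -\intx{e\nabla_x\phi\cdot J} = \intx{e\phi\,\Div J} = -\intx{e\phi\,\partial_t n}$. Differentiating the Poisson equation \eqref{Equ33} in time yields $-\eps_0\Delta_x\partial_t\phi = e\,\partial_t n$, so $\frac{d}{dt}W^0 = \eps_0\intx{\phi\,\Delta_x\partial_t\phi} = -\eps_0\intx{\nabla_x\phi\cdot\nabla_x\partial_t\phi} = -\frac{\eps_0}{2}\frac{d}{dt}\intx{|E|^2}$, i.e. $\frac{d}{dt}\{W^0 + \frac{\eps_0}{2}\intx{|E|^2}\} = 0$.

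\textbf{Expected main obstacle.} The algebra is elementary; the delicate point is the rigorous justification of the two spatial integrations by parts in $x$ in statement iii), i.e. the vanishing of the boundary terms for the $\phi$–integrals. Although $\gz$, hence $n$, is compactly supported in $x$ by statement ii), $\phi$ and $E$ need not be, and $E(t,\cdot)\in (L^2(\R^3))^3$ — needed for the energy to be finite — is assumed only at $t=0$. The clean route is first to establish, from the a priori balance on a truncated or regularized problem (or from the integrability of $n(t)-n_0$ against $|x|^{-1}\ast$), that $E(t,\cdot)\in (L^2(\R^3))^3$ for every $t$, and then to pass to the limit; the weak-$\star$ convergence $\theta(\cdot/\eps)\rightharpoonup\ave{\theta}$, $\theta^2(\cdot/\eps)\rightharpoonup\ave{\theta^2}$ and the uniform moment bounds invoked in i) are routine.
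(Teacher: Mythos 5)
Your argument is correct, and for parts i) and iii) it is essentially the paper's proof: the same function $\chi$, the same key identity $V\cdot\nabla_{(x,q)}\chi = eE\cdot\left(\frac{q}{m}+\frac{\oc}{2}\ave{\theta}\,^\perp x\right)$ (equation \eqref{EquMasterInv} in the paper), a Gronwall bound making the $\chi$-weighted (resp.\ moment-weighted) integral locally bounded, the change of variables $q=p-\frac{m\oc}{2}\theta(t/\eps)\,^\perp x$ followed by weak-$\star$ convergence of the oscillating profiles for i), and the continuity equation plus the time-differentiated Poisson equation for iii) --- the paper compresses the latter into ``standard computations'' leading to its \eqref{Equ34}, which is exactly the chain you spell out, and it is no more careful than you are about the decay needed for the $\phi$-integrations by parts, a point you rightly flag. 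The genuine difference is in part ii): you control the support by following the characteristics of \eqref{Equ25} and applying Gronwall to $|X(t)|+|Q(t)|$, which is legitimate here because $E\in L^1_{\mathrm{loc}}(\R_+;W^{1,\infty})$ makes the flow well defined, whereas the paper never invokes characteristics and instead inserts the test functions $\xi(\sqrt{\chi(x,q)}-\alpha(t))$ and $\xi(|x_3-tq_3/m|-\beta(t))$ into the weak formulation, exploiting again the invariant $\chi$. Your route is shorter and more elementary; the paper's buys two things: it applies directly at the level of weak solutions (no Lipschitz flow needed), and it produces support bounds expressed through $\chi$ and through $|x_3-tq_3/m|$, i.e.\ a sharper, anisotropic localization (in particular in the directions transverse to the magnetic field) that is of the same flavour as the confinement statements exploited later in the paper, while your Gronwall radius grows exponentially and mixes all directions. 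Both yield the continuous nondecreasing $R(t)$ claimed in the statement, so there is no gap.
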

\begin{proof}
i) It is easily seen that 
\begin{eqnarray}
\intxp{\fz(t,\s, x, p) } & = &  \intxp{\gz\left (t, x, p - \frac{m\oc}{2} \tet{} \;^\perp x    \right )} \nonumber \\
& = &  \intxq{\gz (t,x,q)} \nonumber \\
& = & \intxq{\gz (0,x,q)} \nonumber \\
& = &  \intxq{\fin \left (x, q + \frac{m\oc}{2}\theta (0) \;^\perp x    \right )} \nonumber \\
& = & \intxp{\fin (x,p)} \nonumber 
\end{eqnarray}
saying that $\fz(t, t/\eps, \cdot, \cdot)$ belongs to $L^1 (\R _x ^3 \times \R _p ^3)$ uniformly in $t\in \R_+,\; \eps >0$. 
We consider the function 
\begin{eqnarray}
\chi (x,q) & = &  \frac{1}{T} \int _0 ^T \frac{1}{2m} \left | q + \frac{m \oc }{2} \theta (s) \;^\perp x \right | ^2 \;\mathrm{d}s \nonumber \\
& = & 
\frac{|q|^2}{2m} + \frac{m \oc ^2}{4} \ave{\theta ^2} \frac{|^\perp x|^2}{2} + \frac{\oc}{2} \ave{\theta} ( q\cdot \;^\perp x) \nonumber \\
& = & \frac{1}{2m} \left | q + \frac{m \oc}{2} \ave{\theta} \;^\perp x  \right | ^2 + \frac{m \oc ^2}{4} ( \ave{\theta ^2} - \ave{\theta} ^2 ) \frac{|^\perp x|^2}{2}.\nonumber 
\end{eqnarray}
Observe that 
\begin{equation}
\label{EquMasterInv}
\left (\frac{q}{m} + \frac{\oc}{2} \ave{\theta} \;^\perp x    \right )\cdot \nabla _x \chi  + \left ( eE + \frac{\oc}{2} \ave{\theta} \;^\perp q  + \frac{m \oc ^2}{4} \ave{\theta ^2} \;^{\perp \perp }x \right ) \cdot \nabla _q \chi = eE \cdot \left (\frac{q}{m} + \frac{\oc}{2} \ave{\theta} \;^\perp x    \right )
\end{equation}
and thus multiplying \eqref{Equ25} by $\chi$ we deduce 
\begin{eqnarray}
\label{Equ32} \frac{d}{dt} \intxq{\gz \chi (x,q)} & = & \intxq{\gz(t,x,q) eE \cdot \left (\frac{q}{m} + \frac{\oc}{2} \ave{\theta} \;^\perp x    \right )}  \\
& \leq & \intxq{\gz |eE| \left \{ \frac{1}{2m} \left | q + \frac{m \oc}{2} \ave{\theta} \;^\perp x  \right | ^2 + \frac{1}{2m} \right \}}\nonumber \\
& \leq & \|e E (t)\|_{\linf{}} \intxq{\gz \chi } + \frac{\|e E(t) \|_{\linf{}}}{2m}  \intxp{\fin}.\nonumber
\end{eqnarray}
By Gronwall's lemma it follows that $\intxq{\;\gz(\cdot, x, q) \chi (x,q)} \in L^\infty _{\mathrm{loc}}( \R_+)$ 
\begin{eqnarray}
\intxq{\gz (t,x,q) \chi } & \leq & \left ( \intxq{\gz (0,x,q) \chi  } + \frac{\| e E\|_{L^1([0,t];\linf{})}}{2m} \intxp{\fin }   \right ) \nonumber \\
& \times & \exp \left ( \int _0 ^t \|e E(s)\|_{\linf{}} \;\mathrm{d}s    \right ). \nonumber
\end{eqnarray}
For any function $\eta \in L^1 _{\mathrm{loc}} ( \R_+)$ we can write
\begin{eqnarray}
& & \lime \int _{\R_+} \eta (t) \intxp{\fz (t, \s, x, p)\frac{|p|^2}{2m}}\mathrm{d}t  \nonumber \\
&  = & \lime \int _{\R_+} \eta (t) \intxp{\gz \left (t, x, p- \frac{m \oc}{2} \tet{} \;^\perp x \right )\frac{|p|^2}{2m}}\mathrm{d}t\nonumber \\
& = & \lime{}  \int _{\R_+} \eta (t) \intxq{\gz  (t, x, q)\frac{1}{2m}\left |q + \frac{m \oc }{2}\tet{} \;^\perp x   \right | ^2}\mathrm{d}t\nonumber \\
& = & \int _{\R_+} \eta (t) \intxq{\gz  (t, x, q) \chi (x,q) } \mathrm{d}t
\nonumber 
\end{eqnarray}
saying that 
\[
\intxp{\frac{|p|^2}{2m}\fz(t,\s, x, p) } \rightharpoonup W^0 (t)\;\mbox{ weakly }\; \star \;\mbox{ in }\; L^\infty _{\mathrm{loc}} ( \R_+) \;\mbox{ as }\; \eps \searrow 0.
\]
ii) Assume that  $\supp \fin \subset \{(x,p) \;:\; |x| \leq \Rin, |p| \leq \Rin   \}$ with $\Rin >0$. Therefore
\begin{eqnarray}
\supp \gz(0, \cdot, \cdot) & \subset &  \{ (x,q )\;:\;|x| \leq \Rin,\;\;|q| \leq \left ( 1 + m |\oc|/2\; \|\theta \|_{\linf{}} \right ) \Rin  \} \nonumber \\
&  \subset  & \{ (x,q)\;:\; \sqrt{\chi (x,q)} \leq \nu \Rin, |x_3| \leq \nu \Rin\}
\end{eqnarray}
for some $\nu >0$. We consider a function $\xi$ satisfying 
\[
\xi \in C^1 (\R),\;\;0 \leq \xi \leq 1,\;\;\supp \xi = \R_+,\;\;\xi ^{\;\prime} \geq 0.
\]
Applying the weak formulation of \eqref{Equ25} with the test function $(t,x,q) \to \xi ( \sqrt{\chi (x,q)} - \alpha (t))$, with $\alpha \in C^1 (\R_+)$ yields by \eqref{EquMasterInv}
\begin{eqnarray}
& & \intxq{\gz (t,x,q) \xi ( \sqrt{\chi (x,q)} - \alpha (t) )} - \intxq{\gz (0,x,q) \xi ( \sqrt{\chi (x,q)} - \alpha (0)) }\nonumber \\
& = & \int _0 ^t \intxq{\gz (\tau, x, q) \xi ^{\;\prime} \left \{- \alpha ^{\;\prime} (\tau) + \frac{eE(\tau, x)}{2 \sqrt{\chi (x,q)}} \cdot \left ( \frac{q}{m} + \frac{\oc }{2} \ave{\theta} \;^\perp x \right )   \right \}}\mathrm{d}\tau \nonumber \\
& \leq & \int _0 ^t \intxq{\gz (\tau, x, q) \xi ^{\;\prime} \left \{- \alpha ^{\;\prime} (\tau) + \frac{\|eE (\tau)\|_{\linf{}}}{\sqrt{2m}} \right \}}\mathrm{d}\tau. \nonumber 
\end{eqnarray}
We take $\alpha (0) = \nu \Rin$ and $\alpha ^{\;\prime} (\tau) = \frac{\|eE (\tau)\|_{\linf{}}}{\sqrt{2m}}$, $\tau \in \R_+$. Since $\gz (0,x,q) \xi  ( \sqrt{\chi (x,q)} - \alpha (0) ) = 0$ one gets
\[
\intxq{\gz (t,x,q) \xi  ( \sqrt{\chi (x,q)} - \alpha (t) )} \leq 0
\]
implying that $\supp \gz (t, \cdot, \cdot)  \subset \{(x,q):\sqrt{\chi (x,q)} \leq \alpha (t)\}$. Similarly, applying the weak formulation of \eqref{Equ25} with the test function $(t,x,q) \to \xi ( |x_3 - t q_3 /m| - \beta (t))$, $\beta \in C^1 (\R_+)$ we obtain
\begin{eqnarray}
& & \intxq{\gz (t,x,q) \xi (|x_3 - t q_3 /m|  - \beta (t) )} - \intxq{\gz (0,x,q) \xi ( |x_3| - \beta (0)) }\nonumber \\
& = & \int _0 ^t \intxq{\gz (\tau, x, q) \xi ^{\;\prime} \left \{- \beta ^{\;\prime} (\tau) - \frac{\tau e E_3(\tau,x ) }{m} \mathrm{sgn} (x_3 - t q_3 /m) \right \}}\mathrm{d}\tau \nonumber \\
& \leq & \int _0 ^t \intxq{\gz (\tau, x, q) \xi ^{\;\prime} \left \{- \beta ^{\;\prime} (\tau) + \frac{\tau \|eE_3 (\tau)\|_{\linf{}}}{m} \right \}}\mathrm{d}\tau. \nonumber 
\end{eqnarray}
Taking $\beta (0) = \nu \Rin$ and $\beta ^{\;\prime} (\tau) = \frac{\tau \|eE_3 (\tau)\|_{\linf{}}}{m}$, $\tau \in \R_+$ we deduce that
\[
\supp \gz (t, \cdot, \cdot)  \subset \{(x,q)\;:\; |x_3 - t q_3 /m| \leq \beta (t)\}.
\]
We have proved that 
\begin{eqnarray}
\supp \gz (t)  & \subset &   \{ (x,q) \;:\; \sqrt{\chi (x,q)} \leq 
\nu \Rin + \frac{|e|}{\sqrt{2m}} \|E\|_{L^1([0,t];\linf{}) }\} \nonumber \\
& \cap & \{ (x,q) \;:\;  
|x_3 | \leq \nu \Rin + t \frac{|q_3|}{m} + \frac{t \|e E _3\|_{L^1([0,t];\linf{})}}{m}\} \nonumber \\
& \subset & \{ (x,q) \;:\;  |x| \leq R(t),\;\;|q| \leq R(t) \} \nonumber 
\end{eqnarray}
for some continuous nondecreasing function $R(t)$. Notice also that for any $t \in \R_+$
\[
\supp \fz (t, \cdot, \cdot, \cdot) \subset \{ (s,x,p) \;:\; |x| \leq R(t),\;\;|p| \leq ( 1 + m |\oc| /2 \;\|\theta \|_{\linf{}} ) R(t) \}.
\]
iii) Assume now that the electric potential solves the Poisson equation \eqref{Equ33}. By standard computations involving the continuity equation
\[
\partial _t \intq{\gz} + \Div \intq{\left ( \frac{q}{m} + \frac{\oc}{2} \ave{\theta} \;^\perp x   \right ) \gz } = 0
\]
one gets
\begin{equation}
\label{Equ34} \frac{d}{dt} \frac{\eps _0 }{2} \intx{|E|^2} = - \intx{eE (t,x) \cdot \intq{\left ( \frac{q}{m} + \frac{\oc}{2} \ave{\theta} \;^\perp x   \right ) \gz}}.
\end{equation}
Combining \eqref{Equ32}, \eqref{Equ34} yields
\[
\frac{d}{dt} \left \{\intxq{\gz (t,x,q) \chi (x,q) } + \frac{\eps _0 }{2} \intx{|E(t,x) |^2}   \right \} = 0.
\]
\end{proof}
In the sequel we inquire about the confinement properties of the limit model \eqref{Equ25}, \eqref{Equ31} (resp. \eqref{EquNew3}, \eqref{EquNew4}). We exploit here the invariants of \eqref{Equ25}. Let us consider for the moment that the potential is stationary. In this case notice that \eqref{Equ25} writes under the Hamiltonian form
\[
\partial _t \gz + \nabla _q H  \cdot \nabla _x \gz - \nabla _x H \cdot \nabla _q \gz = 0
\]
with the Hamiltonian 
\[
H(x,q) = \chi (x, q) + e \phi (x) = \frac{1}{2m} \left   |q + \frac{m \oc }{2} \ave{\theta} \;^\perp x   \right |^2 + \frac{m \oc ^2}{4} ( \ave{\theta ^2} - \ave{\theta} ^2 ) \frac{|^\perp x |^2}{2} + e \phi (x).
\]
In particular $H$ is a stationary solution of \eqref{Equ25} or equivalently $H$ is an invariant of the characteristic flow of \eqref{Equ25}
\[
\frac{dX}{dt} = \nabla _q H (X(t), Q(t)),\;\;\frac{dQ}{dt} = -\nabla _x H (X(t), Q(t)).
\]
Under additional hypotheses on the electric potential $\phi$ it is easily seen that the plasma remains confined in a bounded region around the magnetic field lines. Indeed, assume that the hypothesis in Theorem \ref{MainResult2} holds true
\begin{equation}
\label{Equ35} \lim _{|^\perp x| \to +\infty} \left \{e \phi (x)  + \frac{m\oc ^2}{4} ( \ave{\theta ^2} - \ave{\theta} ^2 ) \frac{|^\perp x|^2}{2}    \right \} = + \infty
\end{equation}
uniformly with respect to $x_3$ and that at the initial time we have
\begin{equation}
\label{Equ36} 0 \leq \gz (0, x, q) \leq \lambda ( H(x,q)),\;\;(x,q) \in \R ^3 \times \R ^3
\end{equation}
for some nonnegative profile $\lambda \in C^1  (\R)$, vanishing on $[L,+\infty[$. By the maximum principle we deduce that
\[
0 \leq \gz(t,x,q) \leq \lambda ( H(x,q)),\;\;(t,x,q) \in \R_+ \times \R ^3 \times \R ^3
\]
which guarantees the compactness of the support of $\gz (t)$ along the orthogonal directions to the magnetic lines, uniformly in time. Indeed, we have
\[
\supp \gz (t) \subset \{ (x,q) \;:\; H(x,q) \leq L\} \subset \{(x,q) \;:\; e\phi (x) + \frac{m\oc^2}{4} ( \ave{\theta ^2} - \ave{\theta} ^2 ) \frac{|^\perp x|^2}{2} \leq L \}.
\]
By the hypothesis \eqref{Equ35} there is $R >0$ such that for any $x \in \R^3$ verifying  $|^\perp x | > R$ we have
\[
e\phi (x) + \frac{m\oc^2}{4} ( \ave{\theta ^2} - \ave{\theta} ^2 ) \frac{|^\perp x|^2}{2} > L
\]
and finally 
\[
\supp \gz (t) \subset \{(x,q) \;:\; |^\perp x | \leq R \},\;\;\forall\;t \in \R_+.
\]

\begin{remark}
If the electric potential depends only on $|^\perp x|$ and $x_3$ then $(q \cdot \;^\perp x)$ is another invariant of \eqref{Equ25}. If the electric potential depends only on $|^\perp x|$ then $q_3$ is an invariant of \eqref{Equ25} as well. 
\end{remark}
When the electric potential depends on time, the previous Hamiltonian becomes $H(t,x,q) = \chi (x,q) + e \phi (t,x)$ and therefore we obtain
\begin{equation}
\label{EquTimDepH} \partial _t H + \left (\frac{q}{m} + \frac{\oc}{2} \ave{\theta} \;^\perp x     \right )\cdot \nabla _x H + \left ( e E + \frac{\oc}{2} \ave{\theta} \;^\perp q + \frac{m\oc ^2}{4} \ave{\theta ^2} \;^{\perp \perp } x   \right ) \cdot \nabla _q H = e \partial _t \phi.
\end{equation}
In this case we need to construct a particular super-solution for \eqref{Equ25}.

\begin{proof} (of Theorem \ref{MainResult2} for time dependent electric potential) We work in the phase space $(x,q)$, using the equation \eqref{Equ25} and the initial condition \eqref{Equ31}. By the hypotheses we know that
\[
\gz (0,x, q) \leq \lambda ( \chi (x,q) + e \phi (0,x)),\;\;(x,q) \in \R ^3 \times \R ^3.
\]
Let us consider the function $h$ defined for any $(t,x,q) \in \R_+ \times \R ^3 \times \R^3$ by
\[
h(t,x,q) = \lambda \left ( \chi (x,q) + e \phi (t,x) - \int _0 ^t \sup _{y \in \R^3} \{ e \partial _t \phi (s,y) \}\;\mathrm{d}s \right ).
\]
Observe that 
\[
h(0,x,q) = \lambda ( \chi (x,q) + e \phi (0,x)) \geq \gz (0, x, q),\;\;(x,q) \in \R ^3 \times \R ^3.
\]
Taking into account \eqref{EquTimDepH} it is easily seen, by the monotonicity of $\lambda$, that
\[
\partial _t h + \left (\frac{q}{m} + \frac{\oc}{2} \ave{\theta} \;^\perp x     \right )\cdot \nabla _x h + \left ( e E + \frac{\oc}{2} \ave{\theta} \;^\perp q + \frac{m\oc ^2}{4} \ave{\theta ^2} \;^{\perp \perp } x   \right ) \cdot \nabla _q h \geq 0.
\]
By the maximum principle we deduce that 
\[
\gz (t,x,q) \leq h(t,x,q),\;\;(t,x,q) \in \R_+ \times \R ^3 \times \R^3.
\]
By the hypothesis \eqref{EquNew5} there is $R >0$ such that for any $(t,x) \in \R_+ \times \R ^3$, with $|^\perp x| > R$ we have
\[
e \phi (t, x) - \int _0 ^t \sup _{y \in \R^3} \{ e \partial _t \phi (s,y) \}\;\mathrm{d}s + \frac{m\oc ^2}{4} ( \ave{\theta ^2} - \ave{\theta} ^2 ) \frac{|^\perp x|^2}{2} > L
\]
implying that for any $t \in \R_+$
\begin{eqnarray}
\supp \gz (t) & \subset & \{ (x,q) \;:\; \chi (x,q) + e \phi (t, x) - \int _0 ^t \sup _{y \in \R^3} \{ e \partial _t \phi (s,y) \}\;\mathrm{d}s \leq L\} \nonumber \\
& \subset & \{ (x,q) \;:\;e \phi (t, x) - \int _0 ^t \sup _{y \in \R^3} \{ e \partial _t \phi (s,y) \}\;\mathrm{d}s + \frac{m\oc ^2}{4} ( \ave{\theta ^2} - \ave{\theta} ^2 ) \frac{|^\perp x|^2}{2} \leq L\} \nonumber \\
& \subset &  \{ (x,q) \;:\;|^ \perp x| \leq R \}.\nonumber
\end{eqnarray}
Since $\fz (t,s,x,p) = \gz (t, x, p - m \oc \tes /2 \;^\perp x)$ finally one gets
\[
\supp \fz (t, s, \cdot, \cdot) \subset \{ (x,p) \;:\; |^ \perp x | \leq R\},\;\;t \in \R_+, s \in \R.
\]
In particular 
\[
\supp \fz (t, \s, \cdot, \cdot) \subset \{ (x,p) \;:\; |^ \perp x | \leq R\},\;\;t \in \R_+,\;\; \eps >0.
\]
\end{proof}

\section{Asymptotic behaviour}
\label{AsyBeh}
\indent

The aim of this section is to justify rigorously the Hilbert expansion \eqref{Equ14}. More precisely we intend to prove that $\fe (t,x,p) =  \fz (t,\s, x, p) + {\cal O}(\eps)$ strongly in $\ltxp{}$, uniformly for $t$ in bounded intervals. The idea is to introduce the solution $\Fe = \Fe (t,s,x,p)$ of the transport problem
\begin{equation}
\label{Equ40} \partial _t \Fe + \frac{p}{m} \cdot \nabla _x \Fe + (eE(t,x) + \oc \tes{} \;^\perp p ) \cdot \nabla _p \Fe + \frac{1}{\eps} {\cal T} \Fe = 0
\end{equation}
with the initial condition
\begin{equation}
\label{Equ41} \Fe (0,s,x,p) = \fin \left ( x, p - \frac{m\oc}{2}\tes{} \;^\perp x + \frac{m\oc}{2}\theta(0) \;^\perp x\right )
\end{equation}
and to observe that $\Fe(t,\s, x, p)$ satisfies \eqref{Equ4}, \eqref{EquIC}, saying that $\Fe (t,\s, x, p) = \fe (t,x,p)$. We start by estimating the error between $\Fe (t,s,x,p) $ and $\fz (t,s,x,p)$. We also prove that for any $I \in \R_+$ the functions $\{ F ^\eps (t): t \in [0,I], \eps >0\}$ are uniformly compactly supported. 
\begin{pro}
\label{P4} Assume that $E \in L^1 _{\mathrm{loc}} ( \R_+; W^{2,\infty}(\R^3))$, $\partial _t E \in L^1_{\mathrm{loc}} (\R_+; \linf{})$ for any $I \in \R_+$, $\fin \in W^{2,\infty} ( \R ^3 \times \R ^3)$ and $\supp \fin $ is compact. Then for any $I \in \R_+$, there is a constant $C_1(I)$ such that 
\[
\|\Fe (t) - \fz (t) \| \leq C_1(I) \eps,\;\;t \in I, \;\;\eps >0.
\]
\end{pro}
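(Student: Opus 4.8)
The plan is to set up a standard two-scale energy (Hilbert expansion) estimate. I would introduce the first corrector $\fo$ as the unique zero-average solution of the cell equation coming from order $\eps^0$, namely
\[
{\cal T}\fo = - \left ( \partial _t \fz + \frac{p}{m}\cdot \nabla _x \fz + (eE + \oc\tes{}\;^\perp p)\cdot\nabla_p \fz \right ),
\]
which is solvable because the right-hand side lies in $\ker\ave{\cdot}$ by the derivation of \eqref{Equ24}, and which is controlled in $L^2$ (and, by Proposition \ref{P3}, in $L^\infty_{\#}(\R_s;L^2)$) by the $W^{1,\infty}$-norm of $E$ and the regularity of $\fz$. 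Here the regularity-propagation results of Section \ref{CommAveDer} (Proposition \ref{RegH1}) are what gives $\fo$ enough $x,p$-derivatives, using that $\fin\in W^{2,\infty}$ has compact support and $E\in L^1_{\mathrm{loc}}(W^{2,\infty})$ with $\partial_t E\in L^1_{\mathrm{loc}}(L^\infty)$. Then I would define the remainder $R^\eps(t,s,x,p) = \fz + \eps\fo - \Fe$ (or $\fz - \Fe$ directly, carrying $\eps\fo$ as a source), and compute the transport equation it satisfies:
\[
\partial _t R^\eps + \frac{p}{m}\cdot\nabla_x R^\eps + (eE + \oc\tes{}\;^\perp p)\cdot\nabla_p R^\eps + \frac{1}{\eps}{\cal T}R^\eps = \eps\, G^\eps,
\]
where the $1/\eps$ singular terms cancel exactly by construction of $\fz,\fo$ and the source $G^\eps = \partial_t\fo + \frac{p}{m}\cdot\nabla_x\fo + (eE+\oc\tes{}\;^\perp p)\cdot\nabla_p\fo$ is bounded in $L^2$ uniformly in $\eps$, on $[0,I]$, by the estimates just mentioned. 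I would also check the initial remainder: $R^\eps(0) = \eps\fo(0)$, hence $\|R^\eps(0)\| = O(\eps)$.

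Next I would run the $L^2$ estimate on $R^\eps$. The operator $\frac{p}{m}\cdot\nabla_x + (eE+\oc\tes{}\;^\perp p)\cdot\nabla_p$ is (up to the $L^\infty$-in-time bounded, skew-ish lower-order term $\oc\tes{}\;^\perp p\cdot\nabla_p$, whose $p$-divergence vanishes) a transport operator with divergence-free advection field in $(x,p)$, and ${\cal T}$ is divergence-free in $(s,p)$, so multiplying by $R^\eps$ and integrating over $(s,x,p)$ kills the $\frac{1}{\eps}{\cal T}R^\eps$ term and the transport terms, leaving
\[
\frac{1}{2}\frac{d}{dt}\|R^\eps(t)\|^2 \leq \eps\,\|G^\eps(t)\|\,\|R^\eps(t)\| \leq \frac{\eps^2}{2}\|G^\eps(t)\|^2 + \frac{1}{2}\|R^\eps(t)\|^2,
\]
(any genuinely nonvanishing divergence contributes only an $L^1_{\mathrm{loc}}$-in-time, $L^\infty$-in-space factor, absorbed by Gronwall). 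Gronwall's lemma then gives $\|R^\eps(t)\| \leq C_1(I)\eps$ on $[0,I]$, and since $\|\eps\fo(t)\| = O(\eps)$ uniformly on $[0,I]$, we conclude $\|\Fe(t)-\fz(t)\| \leq C_1(I)\eps$. Strictly speaking one argues first for smooth data and passes to the limit by density, or works directly with the weak formulation; the uniform compact support of $\Fe(t)$ (to be established in the companion statement) makes all the integrations by parts legitimate.

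The main obstacle is not the energy estimate itself — that is routine once the structure is in place — but producing the corrector $\fo$ with enough regularity and with uniform-in-$\eps$ bounds on $G^\eps$. Concretely: $G^\eps$ involves $\nabla_x\fo$ and $\nabla_p\fo$, so one needs $\fz\in W^{2,\cdot}$ in $(x,p)$ with time-continuity, and one must verify that ${\cal T}^{-1}$ genuinely propagates this regularity and commutes with $\partial_t$ and with the $x,p$-derivations — this is exactly the content of Propositions \ref{TimDer}, \ref{DerCom} and \ref{RegH1}, applied to the complete family of involutive fields in Proposition \ref{FamInv}. The bookkeeping of which norms of $E$ (up to $W^{2,\infty}$) and $\partial_t E$ enter, and checking the compact-support propagation so that everything stays in $L^2$, is the part that needs care; once $\|\fo(t)\| + \|\nabla_{x,p}\fo(t)\| + \|\partial_t\fo(t)\|$ is bounded on $[0,I]$, the rest follows mechanically.
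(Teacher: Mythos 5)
Your argument is correct and follows essentially the same route as the paper: the corrector $\fo$ is obtained from Proposition \ref{P2} applied to the zero-average right-hand side of \eqref{Equ24}, its derivatives are controlled via Propositions \ref{TimDer} and \ref{RegH1} (with the involutive fields of Proposition \ref{FamInv}), and the remainder $\Fe - \fz - \eps \fo$ is estimated in $L^2$ by the same divergence-free energy computation plus Gronwall, with $R^\eps(0)=\eps\fo(0)$. The only cosmetic differences are that $\fo$ (hence your source $G^\eps$) does not depend on $\eps$, and the paper does not need the uniform compact support of $\Fe$ at this stage.
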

\begin{proof}
By Proposition \ref{ComSuppLoc} ii), the solution of the problem \eqref{EquNew3}, \eqref{EquNew4} has compact support, uniformly for $t$ in bounded intervals {\it i.e.,} $
\forall \;I \in \R_+,\exists \;R(I)$ such that 
\[
\supp \fz (t, \cdot, \cdot, \cdot) \subset \{ (s,x,p) \;:\; |x| \leq R(I),\;\;|p| \leq R (I)\},\;\;t \in [0,I].
\]
Under our hypotheses, the solution $\fz$ has the regularity 
\[
\fz, \nabla _{(t,x,p)} \fz, \nabla ^2 _{(t,x,p)} \fz \in L^\infty ([0,I] \times \R_s \times \R ^3 \times \R ^3),\;\;I \in \R_+.
\]
Recall that the model \eqref{Equ26} is equivalent to \eqref{Equ24}. By Proposition \ref{P2}, for any $t \in \R_+$, there is a unique function $\fo (t)$ of zero average such that 
\begin{equation}
\label{Equ42} \partial _t \fz + \frac{p}{m} \cdot \nabla _x \fz + ( e E(t,x) + \oc \tes{} \;^\perp p ) \cdot \nabla _p \fz + {\cal T}\fo (t) = 0.
\end{equation}
Since $\fz$ is smooth and has compact support (uniformly with respect to $t$ in bounded intervals) the following set
\[
\left \{ \nabla _{(t,x,p)} \left ( \partial _t \fz + \frac{p}{m} \cdot \nabla _x \fz + ( e E(t,x) + \oc \tes{} \;^\perp p ) \cdot \nabla _p \fz \right ) \;:\; t \in [0,I] \right \}
\]
is bounded in $\ltpltxp{}$. By Propositions \ref{TimDer}, \ref{RegH1} we deduce that 
\[
\{ \nabla _{(t,x,p)} \fo (t)\;:\;t \in [0,I]\}
\]
remains bounded in $\ltpltxp{}$. Combining \eqref{Equ40}, \eqref{Equ42} and the constraint \eqref{Equ22} yields
\begin{eqnarray}
&  &\left ( \partial _t + \frac{p}{m} \cdot \nabla _x + (eE(t,x) + \oc \tes{} \;^\perp p ) \cdot \nabla _p \right )\{\Fe - \fz - \eps \fo\} + \frac{1}{\eps} {\cal T} \{\Fe - \fz - \eps \fo\}\nonumber \\
& & = - \eps \left \{\partial _t \fo + \frac{p}{m} \cdot \nabla _x \fo + ( eE(t,x) + \oc \tes{} \;^\perp p ) \cdot \nabla _p \fo \right \} \nonumber
\end{eqnarray}
and after integration with respect to $(s,x,p)$ one gets
\[
\frac{d}{dt}\|\Fe - \fz - \eps \fo\| \leq \eps \left \|\partial _t \fo + \frac{p}{m} \cdot \nabla _x \fo + ( eE(t) + \oc \theta \;^\perp p ) \cdot \nabla _p \fo  \right \|.
\]
Taking into account that $\Fe, \fz$ satisfy the same initial condition we deduce that 
\[
\|\Fe (t) - \fz (t) - \eps \fo (t) \| \leq \eps \|\fo (0) \| + \eps \int _0 ^t \left \|\left ( \partial _t + \frac{p}{m} \cdot \nabla _x + ( eE(r) + \oc \theta \;^\perp p ) \cdot \nabla _p \right ) \fo  \right \|\;\mathrm{d}r 
\]
and finally 
\[
\|\Fe (t) - \fz (t) \| \leq C_1 (I) \eps,\;\;t \in I,\;\;\eps >0.
\]
\end{proof}
\begin{pro}
\label{UnifCompSuppF} Assume that the electric field is smooth $E \in L^1 _{\mathrm{loc}}(\R_+; W^{1,\infty}(\R^3))$ and that $\fin$ has compact support $\supp \fin \subset \{(x,p) : |x| \leq \Rin,|p| \leq \Rin\}$. Then there is a continuous nondecreasing function $\delta : \R_+ \to \R_+$ such that 
\[
\supp F ^\eps  (t, \cdot, \cdot, \cdot)  \subset \{ (s,x,p) \;:\; |p| \leq \delta (t),\;\frac{m |\oc|\;\|\theta \|_{L^\infty}}{2} |x| \leq \delta (t)\},\;t \in \R_+, \;\eps >0.
\] 
In particular for any $I \in \R_+$
\[
 \bigcup _{\eps >0, t \in [0,I]} \;\supp F ^\eps (t, \cdot, \cdot, \cdot)  \subset \{ (s,x,p) \;:\; |p| \leq \delta (I), \;\frac{m |\oc|\;\|\theta \|_{L^\infty}}{2} |x| \leq \delta (I)\}.
\]
\end{pro}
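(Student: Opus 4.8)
The plan is to argue directly on the characteristics of \eqref{Equ40}, \eqref{Equ41}, exploiting that the transport field is free of divergence in the variables $(s,x,p)$: indeed $\mathrm{div}_{(s,p)}\left(1, \frac{m\oc}{2}\teps{}\;^\perp x\right) = 0$, $\Div (p/m) = 0$ and $\mathrm{div}_p\left(eE(t,x) + \oc \tes{}\;^\perp p\right) = 0$, so $\Fe$ is constant along the flow $(S,X,P)(t) = (S,X,P)(t;s_0,x_0,p_0)$ solving
\[
\frac{dS}{dt} = \frac{1}{\eps},\;\;\frac{dX}{dt} = \frac{P}{m},\;\;\frac{dP}{dt} = eE(t,X) + \oc \theta(S)\;^\perp P + \frac{m\oc}{2\eps}\theta ^{\;\prime}(S)\;^\perp X,\;\;(S,X,P)(0) = (s_0,x_0,p_0),
\]
and $\supp \Fe(t,\cdot,\cdot,\cdot)$ is the image of $\supp \Fe(0,\cdot,\cdot,\cdot)$ under this flow. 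By \eqref{Equ41} and $\supp \fin \subset \{|x| \leq \Rin, |p| \leq \Rin\}$, a point $(s_0,x_0,p_0)$ in $\supp \Fe(0,\cdot,\cdot,\cdot)$ satisfies $|x_0| \leq \Rin$ and $|p_0 - \frac{m\oc}{2}(\theta(s_0) - \theta(0))\;^\perp x_0| \leq \Rin$.

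The only obstruction to a bound independent of $\eps$ is the singular term $\frac{m\oc}{2\eps}\theta ^{\;\prime}(S)\;^\perp X$ in the momentum equation. To remove it I would pass to the prime integral $Q(t) := P(t) - \frac{m\oc}{2}\theta(S(t))\;^\perp X(t)$ of $\cal T$. Using $dS/dt = 1/\eps$, a short computation gives the system
\[
\frac{dX}{dt} = \frac{Q}{m} + \frac{\oc}{2}\theta(S)\;^\perp X,\;\;\frac{dQ}{dt} = eE(t,X) + \frac{\oc}{2}\theta(S)\;^\perp Q + \frac{m\oc ^2}{4}\theta ^2(S)\;^{\perp \perp}X,
\]
in which $\eps$ enters only through the bounded $T$-periodic coefficients $\theta(S(t))$ and $\theta ^2(S(t))$; moreover on the support of the initial data $|X(0)| \leq \Rin$ and $|Q(0)| \leq \big(1 + \frac{m|\oc|}{2}\|\theta\|_{L^\infty}\big)\Rin =: C_0$, uniformly in $\eps$ and in $s_0$.

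Then I would close a Gronwall estimate. Setting $N(t) = |X(t)| + |Q(t)|$, the system above yields
\[
\frac{dN}{dt} \leq |e|\;\|E(t)\|_{\linf{}} + C_1\,N(t),\;\;C_1 = C_1(m, \oc, \|\theta\|_{L^\infty}),
\]
hence $N(t) \leq \big(\Rin + C_0 + |e|\int_0^t\|E(r)\|_{\linf{}}\,\mathrm{d}r\big)\,e^{C_1 t} =: \tilde\delta(t)$, a continuous nondecreasing function depending only on $\Rin$, $\|E\|_{L^1([0,t];\linf{})}$, $m$, $\oc$, $\|\theta\|_{L^\infty}$, not on $\eps$ nor on $s_0$. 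Returning to the original momentum, $|P(t)| \leq |Q(t)| + \frac{m|\oc|}{2}\|\theta\|_{L^\infty}|X(t)| \leq \big(1 + \frac{m|\oc|}{2}\|\theta\|_{L^\infty}\big)\tilde\delta(t)$ while $\frac{m|\oc|\;\|\theta\|_{L^\infty}}{2}|X(t)| \leq \frac{m|\oc|\;\|\theta\|_{L^\infty}}{2}\tilde\delta(t)$; taking $\delta(t)$ equal to the larger of these two quantities gives a continuous nondecreasing $\delta$ with the required inclusion for every $t \in \R_+$ and $\eps >0$, and the statement on $[0,I]$ then follows from the monotonicity of $\delta$.

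The main technical point is thus the change of unknown $p \mapsto q = p - \frac{m\oc}{2}\tes{}\;^\perp x$, which trades the $\eps^{-1}$-singular Laplace drift for the harmless fast-oscillating coefficients $\theta(S)$, $\theta^2(S)$; once this is done the argument is a routine Gronwall estimate, entirely parallel to the support estimate for the limit model in Proposition \ref{ComSuppLoc} ii). Alternatively one could run the same computation at the level of the weak formulation of \eqref{Equ40}, \eqref{Equ41} with cut-off test functions built from $|x|$ and $|q|$ as in that proof, the $\frac{1}{\eps}{\cal T}$ term dropping out because such test functions belong to $\ker {\cal T}$; but the characteristic argument above seems the most economical.
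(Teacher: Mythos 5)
Your proof is correct, and it rests on exactly the same key insight as the paper's: trade $p$ for the invariant $q=p-\frac{m\oc}{2}\tes{}\;^\perp x$ of ${\cal T}$, so that the $\eps^{-1}$-singular term cancels and only bounded, fast-oscillating coefficients $\theta(S)$, $\theta^2(S)$ remain, after which a Gronwall estimate closes uniformly in $\eps$ and in $s_0$. The implementation differs, though. The paper never invokes the characteristic representation of $\Fe$: it works with the weak formulation of \eqref{Equ40}, using cut-off test functions $\xi(h(s,x,p)-\delta(t))$ built from $h=|p-\frac{m\oc}{2}\tes{}\;^\perp x|+\frac{m|\oc|}{2}\|\theta\|_{L^\infty}|x|$, which satisfies ${\cal T}h=0$ so the singular term drops out of the weak formulation directly; the function $\delta$ is then chosen as the solution of the ODE $\delta'=\|eE(t)\|_{\linf{}}+\frac{|\oc|\|\theta\|_{L^\infty}}{2}\delta$ and Gronwall is applied through the cut-off property $z\,\xi'(z)\leq C\xi(z)$. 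This is essentially the second, "alternative" route you sketch at the end. Your main argument instead runs Gronwall pointwise along characteristics, which is more transparent but implicitly uses that the (weak) solution $\Fe$ of \eqref{Equ40}, \eqref{Equ41} is the one transported by the flow, i.e.\ uniqueness/representation by characteristics; this is legitimate here because $S(t)=s_0+t/\eps$ is explicit and the remaining field in $(X,P)$ (equivalently $(X,Q)$) is affine plus $eE(t,X)$ with $E\in L^1_{\mathrm{loc}}(\R_+;W^{1,\infty}(\R^3))$, hence Lipschitz in the state with locally integrable constants — but it is worth stating this identification explicitly, since the paper's test-function argument is designed precisely to avoid it and to apply to weak solutions as such. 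Both routes yield a $\delta$ depending only on $\Rin$, $m$, $\oc$, $\|\theta\|_{L^\infty}$ and $\|E\|_{L^1([0,t];\linf{})}$, independent of $\eps$, which is all the statement requires.
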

\begin{proof}
Let us consider a function $\xi \in C^1 (\R)$ satisfying
\[
0 \leq \xi \leq 1,\;\;\xi ^{\;\prime} \geq 0,\;\;\supp \xi = \R_+,\;\;z \xi ^{\;\prime} (z) \leq C \xi (z),\;\;z \in \R
\]
for some constant $C>0$. We denote by $h$ the function
\[
h(s,x,p) = \left | p - \frac{m \oc}{2} \tes{} \;^\perp x \right | + \frac{m |\oc|}{2} \|\theta \|_{L^\infty} \;|x|.
\]
Notice that $h$ depends only on the invariants $x$ and $q = p -  \frac{m \oc}{2} \tes{} \;^\perp x$ and therefore ${\cal T}h = 0$. By direct computations one gets
\[
\nabla _x \left | p - \frac{m \oc}{2} \tes{} \;^\perp x \right |  = \frac{m \oc}{2} \tes{} \frac{^\perp q}{|q|},\;\;\nabla _p \left | p - \frac{m \oc}{2} \tes{} \;^\perp x \right |  = \frac{q}{|q|}
\]
and therefore
\begin{eqnarray}
\left (  \frac{p}{m} \cdot \nabla _x + (e E + \oc \tes{} \;^\perp p ) \cdot \nabla _p \right ) \left | p - \frac{m \oc}{2} \tes{} \;^\perp x \right |  & = & e E(t) \cdot \frac{q}{|q|} + \frac{m \oc ^2 \theta ^2 (s) }{4} \;^{\perp \perp} x \cdot \frac{q}{|q|} \nonumber \\
& \leq & \|e E(t)\|_{\linf{}} + \frac{m \oc ^2 }{4} \|\theta \|_{L^\infty} ^2 |x|. \nonumber 
\end{eqnarray}
Similarly 
\begin{eqnarray}
\left (  \frac{p}{m} \cdot \nabla _x + (eE(t,x) + \oc \tes{} \;^\perp p ) \cdot \nabla _p \right ) \frac{m |\oc| \;\|\theta\|_{L^\infty}}{2}|x| & = &  \frac{p}{m}\cdot \frac{m |\oc| \;\|\theta\|_{L^\infty}}{2} \frac{x}{|x|} \nonumber \\
& \leq &  \frac{|\oc | \;\|\theta \|_{L^\infty}}{2} |q|. \nonumber 
\end{eqnarray}
Finally the function $h$ satisfies
\begin{eqnarray}
\left ( \partial _t + \frac{p}{m} \cdot \nabla _x + (eE(t,x) + \oc \tes{} \;^\perp p ) \cdot \nabla _p  + \frac{1}{\eps} {\cal T}\right )h 
& \leq & \|e E (t) \|_{\linf{}} + \frac{|\oc | \;\|\theta \|_{L^\infty}}{2} h.\nonumber
\end{eqnarray}
Using now the weak formulation of \eqref{Equ40} with the test function $(t,s,x,p)  \to \xi (h(s,x,p) - \delta (t))$, with $\delta \in C^1 (\R_+)$ yields
\begin{eqnarray}
\label{EquNew61} & & \intsxp{\Fe (t) \xi (h (s,x,p) - \delta (t))} - \intsxp{\Fe (0) \xi (h (s,x,p) - \delta (0))} \nonumber \\
& \leq & \!\!\!\int _0 ^t \!\!\!\intsxp{\Fe (\tau) \xi ^{\;\prime} (h(s,x,p) - \delta (\tau)) \left \{ - \delta ^{\;\prime }(\tau) + \|e E (\tau) \|_{L^\infty} + \frac{|\oc | \;\|\theta \|_{L^\infty}}{2} h\right \}}\mathrm{d}\tau. \nonumber \\
& & 
\end{eqnarray}
Notice that 
\[
\supp \Fe (0, \cdot, \cdot, \cdot) \subset \{  (s,x,p)\;:\; |x| \leq \Rin, \;|p| \leq ( 1 + m |\oc | \; \|\theta \|_{L^\infty} ) \Rin\}
\]
and therefore $\delta _0 : = \sup \{h(s,x,p):(s,x,p) \in \cup _{\eps >0} \supp \Fe (0) \} < +\infty$. In this case we have
\begin{equation}
\label{EquNew62} \intsxp{\Fe (0, s, x, p)  \xi (h(s,x,p) - \delta _0)} = 0,\;\;\eps >0.
\end{equation}
We determine the function $\delta$ by solving
\[
\delta ^ {\;\prime} (\tau) = \|e E (\tau) \|_{L^\infty} + \frac{|\oc | \;\|\theta \|_{L^\infty}}{2} \delta (\tau),\;\;\tau \in \R_+
\]
with the initial condition $\delta (0) = \delta _0$. The right hand side of \eqref{EquNew61} becomes
\begin{eqnarray}
\label{EquNew63} 
& & \int _0 ^t \!\!\!\intsxp{\Fe (\tau) \xi ^{\;\prime} (h(s,x,p) - \delta (\tau))  \frac{|\oc | \;\|\theta \|_{L^\infty}}{2} ( h - \delta (\tau)) }\mathrm{d}\tau \nonumber \\
& \leq & C \;\frac{|\oc | \;\|\theta \|_{L^\infty}}{2}\int _0 ^t \!\!\!\intsxp{\Fe (\tau) \xi  (h(s,x,p) - \delta (\tau)) }\mathrm{d}\tau. 
\end{eqnarray}
Combining \eqref{EquNew61}, \eqref{EquNew62}, \eqref{EquNew63} implies 
\[
\intsxp{\Fe (t) \xi  (h - \delta (t)) }\leq C \;\frac{|\oc | \;\|\theta \|_{L^\infty}}{2}\int _0 ^t \!\!\!\intsxp{\Fe (\tau) \xi  (h - \delta (\tau)) }\mathrm{d}\tau
\]
and by Gronwall's lemma we deduce that 
\[
\intsxp{\Fe (t,s,x,p) \xi  (h(s,x,p) - \delta (t)) } = 0,\;\;t \in \R_+
\]
and therefore
\[
\supp \Fe (t) \subset \{  (s,x,p) \;:\; h (s,x,p) \leq \delta (t) \} \subset \{ (s,x,p) \;:\; |p| \leq \delta (t),\;\frac{m |\oc|\;\|\theta \|_{L^\infty}}{2} |x| \leq \delta (t)\}.
\]
\end{proof}
Once we have estimated the error between $\Fe$ and $\fz$, the asymptotic behaviour of $\fe (t,x,p) - \fz(t,\s, x, p)$ as $\eps \searrow 0$ follows by using the Sobolev inequality in Proposition \ref{P3}.
\begin{proof} (of Theorem \ref{MainResult1})
By Proposition \ref{P3} we have
\begin{eqnarray}
\|\fe (t,\cdot, \cdot) - \fz(t,\s, \cdot, \cdot) \|_{\ltxp{}}& = & \|\Fe (t, \s, \cdot, \cdot) - \fz(t,\s, \cdot, \cdot) \|_{\ltxp{}} \nonumber \\
& \leq & \|\Fe (t,\cdot, \cdot, \cdot) - \fz(t, \cdot, \cdot, \cdot) \|_{\lipltxp{}} \nonumber \\
& \leq & C(T) ( \|\Fe (t) - \fz (t) \| + \|{\cal T} \Fe (t) \|) \nonumber 
\end{eqnarray}
since ${\cal T}\fz (t) = 0$ for any $t \in \R_+$. Thanks to Proposition \ref{P4} we know that 
\[
\|\Fe (t) - \fz (t) \| \leq C_1(I) \eps,\;\;t \in [0,I],\;\;\eps >0
\]
and we are done if we can find a constant $C_2 (I)$ such that 
\[
\|{\cal T}\Fe (t) \| \leq C_2 (I) \eps,\;\;t \in [0,I],\;\;\eps >0
\]
since in that case we would obtain
\[
\|\fe(t,\cdot, \cdot) - \fz(t,\s, \cdot, \cdot) \|_{\ltxp{}} \leq C(T)(C_1 (I) + C_2 (I) ) \eps,\;\;t \in [0,I],\;\;\eps >0.
\]
Obviously, multiplying \eqref{Equ40} by $\Fe$ and integrating with respect to $(s,x,p)$ we control the $L^2$ norm of $\Fe (t, \cdot, \cdot, \cdot)$ uniformly in $t \in \R_+$ and $\eps >0$
\[
\|\Fe (t) \|^2 = \|\Fe (0) \|^2 = T \;\|\fin \|^2 _{\ltxp{}}.
\]
We intend to control the derivatives $\nabla _{(t,x,p)} \Fe$ as well, uniformly with respect to $\eps >0$. The idea is to use the derivations commuting with ${\cal T}$, introduced in Proposition \ref{FamInv}. Indeed, with the notation $a(t,s,x,p) = ( 0, \frac{p}{m}, e E(t,x)  + \oc \tes{} \;^\perp p)$ the equation \eqref{Equ40} becomes
\[
\partial _t \Fe + a \cdot \nabla _{(s,x,p)} \Fe + \frac{1}{\eps} {\cal T} \Fe = 0.
\] 
Applying the operator $c ^i \cdot \nabla _{(s,x,p)}$, $i \in \{1,...,6\}$ and taking into account that $c ^i \cdot \nabla _{(s,x,p)} {\cal T}\Fe = {\cal T} ( c ^i \cdot \nabla _{(s,x,p)} \Fe)$ one gets
\begin{equation}
\label{EquNew31}
\partial _t G ^\eps _i + a \cdot \nabla _{(s,x,p)} G^\eps _i  + [c^i,a] \cdot \nabla _{(s,x,p)} \Fe + \frac{1}{\eps } {\cal T} G^\eps _i = 0
\end{equation}
where $G^\eps _i = c ^i \cdot \nabla _{(s,x,p)} \Fe$ and $[c^i,a]$ are the Poisson brackets between the fields $c^i$, $i \in \{1,...,6\}$ and $a$. Multiplying \eqref{EquNew31} by $G ^\eps _i$ and integrating with respect to $(s,x,p)$ yield
\begin{eqnarray}
\frac{1}{2}\frac{d}{dt} \intsxp{|G ^\eps _i |^2 } & = & - \intsxp{G ^\eps _i \; [c^i,a] \cdot \nabla _{(s,x,p)} \Fe } \nonumber \\
& \leq & \|G ^\eps _i (t) \| \;\|[c^i,a] \cdot \nabla _{(s,x,p)} \Fe (t) \|,\;\;i \in \{1,...,6\} \nonumber 
\end{eqnarray}
or equivalently 
\begin{equation}
\label{EquNew32} 
\|G ^\eps _i (t) \| \leq \|G ^\eps _i (0) \| + \int _0 ^t \|[c^i,a] \cdot \nabla _{(s,x,p)} \Fe ( \tau) \|\;\mathrm{d}\tau,\;\;i \in \{1,...,6\}.
\end{equation}
It is easily seen that for any $i \in \{1,...,6\}$ the field $[c^i,a]$ has no component along $s$, since $c^i _s = a_s = 0$. Therefore $[c^i,a] \in \mathrm{span} \{c^1, ..., c^6\}$
\[
[c^i,a] = \sum _{j = 1} ^6 \gamma _{ij} (t,s,x,p) \;c^j,\;\;i \in \{1,...,6\}
\]
for some coefficients $\gamma _{ij} \in L^1 _{\mathrm{loc}} ( \R_+; L^\infty (\R \times \R^3 \times \R^3))$, $i,j \in \{1,...,6\}$. Actually we have
\[
[c^1,a ] = - \frac{\oc \tes{}}{2}c^2 + \left ( e \partial _{x_1} E_1 - \frac{m \oc ^2 }{4} \theta ^2 (s) \right ) c ^4 + e \partial _{x_1} E_2 c^5 + e \partial _{x_1} E_3 c ^6
\]
\[
[c^2,a ] =  \frac{\oc \tes{}}{2}c^1 +   e \partial _{x_2} E_1 c^4 + \left ( e \partial _{x_2} E_2 - \frac{m \oc ^2 }{4} \theta ^2 (s) \right ) c ^5 + e \partial _{x_2} E_3 c ^6
\]
\[
[c^3,a ] = \sum _{j = 1} ^3 e \partial _{x_3} E_j \;c ^{j+3},\;\;[c^4,a ] = \frac{c^1 }{m} - \frac{\oc }{2} \tes{} \;c^5,\;\;[c^5,a ] = \frac{c^2 }{m} + \frac{\oc }{2} \tes{} \;c^4,\;\;
[c^6,a ] = \frac{c^3}{m}.
\]
It follows that $[c^i, a] \cdot \nabla _{(s,x,p)} \Fe = \sum _{j = 1} ^6 \gamma _{ij} G ^\eps _j$ and \eqref{EquNew32} yields
\[
\|G ^\eps _i (t) \| \leq \| G ^\eps _i (0) \| + \int _0 ^t \sum _{j = 1} ^ 6 \|\gamma _{ij} (\tau) \|_{L^\infty} \|G ^\eps _j (\tau) \|\;\mathrm{d}\tau.
\]
We deduce that 
\[
\sum _{i = 1} ^6 \|G ^\eps _i (t) \| \leq \sum _{i =1 } ^ 6  \|G ^\eps _i (0) \| + \int _0 ^t \gamma (\tau)  \sum _{j = 1} ^6  \|G ^\eps _j (\tau) \| \mathrm{d} \tau
\]
with $\gamma (\tau) = \max _{j \in \{1,...,6\}} \sum _{i = 1} ^6 \|\gamma _{ij} (\tau) \|_{L^\infty}$ and by Gronwall's lemma we obtain that
\[
\{ \nabla _{(x,p)} \Fe (t) \;:\;t \in [0,I],\;\;\eps >0\}
\]
remains bounded in $\ltpltxp{}$. By Proposition \ref{UnifCompSuppF} we know that $\cup _{\eps>0, t \in [0,I]} \supp \Fe (t)$ remains into a compact set of $\R _s /T\Z \times \R ^3 \times \R^3$ and clearly there is a constant $C_3 (I)$ such that for any $t \in [0,I]$
\begin{eqnarray}
\label{EquNew71}
\sup _{\eps >0} \left \| \frac{p}{m} \cdot \nabla _x \Fe (t) + ( e E + \oc \tes{} \;^\perp p ) \cdot \nabla _p \Fe    \right \|  & \leq & C_3 (I) ( 1 + \|E(t) \|_{\linf{}})  \\
& \leq & C_4 (I) ( 1 + \|E(0) \|_{\linf{}} + \|\partial _t E \|_{L^1 ([0,t];\linf{})}).\nonumber
\end{eqnarray}
It remains to estimate the time derivative $\partial _t \Fe$. As before we write
\[
\partial _t (\partial _t \Fe) + a \cdot \nabla _{(s,x,p)} ( \partial _t \Fe ) + e \partial _t E \cdot \nabla _p \Fe + \frac{1}{\eps} {\cal T} (\partial _t \Fe ) = 0
\]
implying that 
\begin{eqnarray}
\frac{1}{2}\frac{d}{dt} \|\partial _t \Fe \|^2 & = &  - e \intsxp{\partial _t E (t) \cdot \nabla _p \Fe (t) \;\partial _t \Fe (t)} \nonumber \\
& \leq &  \|e \partial _t E(t) \cdot \nabla _p \Fe (t) \| \; \|\partial _t \Fe (t) \|. \nonumber 
\end{eqnarray}
We deduce that 
\[
\|\partial _t \Fe (t) \|\leq \|\partial _t \Fe (0) \| + \int _0 ^t \|e \partial _t E (\tau) \|_{L^\infty} \|\nabla _p \Fe (\tau) \|\;\mathrm{d}\tau.
\]
The family of time derivatives $\{\partial _t \Fe (t): t \in [0,I],\eps >0\}$ remains bounded in $\ltpltxp{}$ iff $\{\|\partial _t \Fe (0) \| :\eps >0\}$ remains bounded in $\ltpltxp{}$. Notice that ${\cal T}\Fe (0) = 0$ and therefore
\begin{eqnarray}
\label{EquNew72} \sup _{\eps >0} \|\partial _t \Fe (0) \| & = & \sup _{\eps >0} \| - a \cdot \nabla _{(s,x,p)} \Fe (0) \|  \\
& = & \sup _{\eps >0} \left \| - a \cdot \nabla _{(s,x,p)} \fin \left (x, p - \frac{m \oc}{2} \tes{} \;^\perp x + \frac{m \oc}{2} \theta (0) \;^\perp x  \right )   \right \| < +\infty.\nonumber
\end{eqnarray}
Combining \eqref{EquNew71}, \eqref{EquNew72} we deduce that there is a constant $C_2 (I)$ such that 
\[
\sup _{\eps >0, t \in [0,I]} \left \| \left ( \partial _t + \frac{p}{m} \cdot \nabla _x + (e E(t) + \oc \tes{} \;^\perp p ) \cdot \nabla _p \right ) \Fe (t)   \right \| \leq \eps C_2 (I)
\]
saying that $\sup _{\eps >0, t \in [0,I]} \|{\cal T}\Fe (t) \| \leq \eps C_2 (I)$.
\end{proof}

\section{Three dimensional setting}
\label{3DSetting}

In this section we study the particle dynamics under fast oscillating three dimensional magnetic fields
\[
\Be (t,x) = \theta (\s) B(x) b(x),\;\;\Div (Bb) = 0
\]
for some scalar positive function $B(x)$ and some field of unitary vectors $b(x) \in \R^3$. The analysis is completely analogous to that for fast oscillating homogeneous magnetic fields previously discussed. Therefore we only focus on the formal derivation of the limit model. By Gauss's magnetic law $\Div \Be = 0$ we can write $Bb = \Curl A$, $\Div A = 0$ and by Faraday's law $\partial _t \Be + \Curl \Ee = 0$ we deduce that the rotational part, $\Curl \psi$, of the electric field $\Ee = - \nabla _x \phi  + \Curl \psi$ is given by
\[
\Curl \psi = - \frac{1}{\eps} \tept{} A(x).
\]
The Vlasov equation becomes, with the notations $E = - \nabla _x \phi$, $ \oc (x) = \frac{eB(x)}{m}$
\begin{equation}
\label{EquNew73} \partial _t \fe + \frac{p}{m} \cdot \nabla _x \fe + \left ( e E(t,x) - \frac{e \tept{}}{\eps} A(x) + \oc (x) \tet{} p \wedge b(x) \right ) \cdot \nabla _p \fe = 0.
\end{equation}
We prescribe the initial distribution
\begin{equation}
\label{EquNew84} \fe (0,x,p) = \fin (x,p),\;\;(x,p) \in \R ^3 \times \R ^3.
\end{equation}
Plugging the Hilbert expansion \eqref{Equ14} into \eqref{EquNew73} yields
\begin{equation}
\label{EquNew74} \partial _s \fz - e \;\teps{} A(x) \cdot \nabla _p \fz = 0
\end{equation}
at the lowest order $\eps ^{-1}$ and
\begin{equation}
\label{EquNew75} \partial _t \fz + \frac{p}{m} \cdot \nabla _x \fz + \left ( e E(t,x) + \oc (x) \tes{} p \wedge b(x) \right ) \cdot \nabla _p \fz + \partial _s \fo - e \;\teps{} A(x) \cdot \nabla _p \fo = 0
\end{equation}
at the next order $\eps ^0$. As before, the point is how to eliminate the first order correction $f^1$ appearing in \eqref{EquNew75}, based on the constraint \eqref{EquNew74}. We introduce the operator
\[
{\cal T}_1 u = \mathrm{div}_{(s,p)}\{ u (1,- e \;\teps{}A(x))\}
\]
with domain
\[
D({\cal T}_1) = \{u   \in \ltpltxp{}:\mathrm{div}_{(s,p)} \left \{u \left ( 1, - e \;\teps{} A(x) \right ) \right \} \in \ltpltxp{}\}.
\]
The characteristics $(S,X,P)(\tau;s,x,p)$ of the first order differential operator ${\cal T}_1$ are given by
\begin{equation}
\label{EquNew76} S(\tau;s,x,p) = s + \tau,\;\;X(\tau;s,x,p) = x,\;\;P(\tau;s,x,p) = p + e ( \tes{} - \theta ( s + \tau)) A(x).
\end{equation}
Notice that a complete family of functional independent invariants is given by $\{x, p + e \tes{} A(x)\}$ and therefore the constraint \eqref{EquNew74} becomes
\[
\exists \;\gz = \gz(t,x,q)\;:\; \fz (t,s,x,p) = \gz (t, x, q = p + e \theta (s) A(x)).
\]
In particular $\fz (t, t/\eps, x, p)$ is fast oscillating through the periodic profile $\theta( s = t/\eps)$ and therefore we expect that $\fz (t,s,x,p)$ is the two-scale limit of $\fe (t,x,p)$ when $\eps \searrow 0 $. The average operator $\ave{\cdot}_1$ along the characteristic flow \eqref{EquNew76} is given by
\begin{eqnarray}
\ave{u}_1 (s,x,p) & = &  \frac{1}{T}\int _0 ^T u (S(\tau;s,x,p), X(\tau;s,x,p), P(\tau;s,x,p)) \;\mathrm{d}\tau  \\
& = &  \frac{1}{T}\int _0 ^T u \left (s+ \tau, x, p + e ( \theta (s ) - \theta ( s + \tau)) A( x) \right )\;\mathrm{d}\tau \nonumber \\
& = & \frac{1}{T} \int _0 ^T u \left ( \tau, x, p +e\; \tes{} A( x) - e \; \theta (\tau) A(x)    \right ) \;\mathrm{d}\tau\nonumber 
\end{eqnarray}
for any function $u \in \ltpltxp{}$. The dynamics for $\fz$ is obtained by eliminating $\fo$ in \eqref{EquNew75} taking into account that the functions in the range of ${\cal T}_1$ are zero average. We have
\[
\partial _t \fz + \frac{p}{m} \cdot \nabla _x \fz + \left ( e E(t,x) + \oc (x) \tes{} p \wedge b(x) \right ) \cdot \nabla _p \fz  = - {\cal T}_1 \fo \in \ran {\cal T}_1 = \ker \ave{\cdot}_1
\]
and therefore \eqref{EquNew75} is equivalent to
\[
\ave{\partial _t \fz + \frac{p}{m} \cdot \nabla _x \fz + \left ( e E(t,x) + \oc (x) \tes{} p \wedge b(x) \right ) \cdot \nabla _p \fz }_1 = 0.
\]
We need to average the derivatives with respect to $(t,x,p)$ of the density $\fz$, under the constraint \eqref{EquNew74}. Clearly we have $\ave{\partial _t \fz}_1 = \partial _t \ave{\fz}_1 = \partial _t \fz$.
\begin{lemma}
\label{DerXP3D} Assume that $f(s,x,p) = g\left ( x,q = p + e\;\tes{} A(x)\right )$ is smooth. Then we have
\begin{eqnarray}
& & \ave{\frac{p}{m}\cdot \nabla _x f }_1  =  \frac{q- e\ave{\theta}A(x)}{m}   \cdot \nabla _x g + \frac{e}{m} \partial _x A  ( \ave{\theta} q - e \ave{\theta ^2} A) \cdot \nabla _q g \nonumber \\
& = & \frac{p + e ( \tes{} - \ave{\theta})A(x)}{m} \cdot \nabla _x f + \frac{e}{m} \partial _x A  \left ( ( \ave{\theta} - \theta ) p + e ( 2 \theta \ave{\theta} - \theta ^2 - \ave{\theta ^2} ) A(x) \right )\cdot \nabla _p f      \nonumber 
\end{eqnarray}
and
\begin{eqnarray}
 \ave{(eE(x) + \oc \theta \;p \wedge b ) \cdot \nabla _p f }_1 & = & \left [eE  + \oc ( \ave{\theta} q \wedge b - e \ave{\theta ^2} A(x) \wedge b ) \right ] \cdot \nabla _q g
\nonumber \\
& = & \left [e E + \oc \ave{\theta} p\wedge b  + \oc e (\theta \ave{\theta}-\ave{\theta ^2})A( x) \wedge b     \right ] \cdot \nabla _p f. \nonumber  
\end{eqnarray}
\end{lemma}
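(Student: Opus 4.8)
The plan is to reproduce, in this three dimensional setting, the argument used for Lemma \ref{L1}, the rotation $\;^\perp$ being replaced by $\wedge\, b(x)$ and the antisymmetric matrix by the Jacobian $\partial _x A$ of $A$. Since $q = p + e\,\tes{}A(x)$, the chain rule gives $\nabla _p f = \nabla _q g$ and, after contracting the $x$-derivative with $p$,
\[
\frac{p}{m}\cdot\nabla _x f = \frac{p}{m}\cdot\nabla _x g + \frac{e\,\tes{}}{m}\,(\partial _x A\, p)\cdot\nabla _q g,\qquad (eE + \oc\,\tes{}\,p\wedge b)\cdot\nabla _p f = (eE + \oc\,\tes{}\,p\wedge b)\cdot\nabla _q g .
\]
The key structural fact, exactly as in Lemma \ref{L1}, is that the characteristics \eqref{EquNew76} leave both $x$ and the invariant $q = p + e\,\tes{}A(x)$ unchanged, so that $\nabla _x g$, $\nabla _q g$, as well as $A(x)$, $b(x)$, $E(x)$ and $\partial _x A$, are constant along the flow and may be pulled out of the average $\ave{\cdot}_1$.

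I would then be left with the two elementary averages $\ave{p}_1$ and $\ave{\theta\, p}_1$ (together with $\ave{\theta\, p\wedge b}_1 = \ave{\theta\, p}_1\wedge b$). Plugging $u(s,x,p) = p$ and $u(s,x,p) = \theta(s)\,p$ into the explicit formula for $\ave{\cdot}_1$ and changing variables $r = s+\tau$ over one period yields, just as $\ave{p}$ and $\ave{p\theta}$ in the proof of Lemma \ref{L1},
\[
\ave{p}_1 = q - e\ave{\theta}A(x),\qquad \ave{\theta\, p}_1 = \ave{\theta}\, q - e\ave{\theta ^2}A(x),
\]
and substituting these back into the two expressions above produces the first equality in each of the asserted identities.

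The second equality in each identity is obtained by a purely algebraic rewriting in terms of $f$, using $\nabla _q g = \nabla _p f$, $q = p + e\,\tes{}A(x)$ and $p\cdot\nabla _x g = p\cdot\nabla _x f - e\,\tes{}\,(\partial _x A\, p)\cdot\nabla _p f$. Collecting the $\nabla _p f$ contributions, the terms linear in $A(x)$ recombine into the coefficient $e(\ave{\theta} - \tes{})\,p$ and the quadratic ones into $e ^2(2\,\tes{}\ave{\theta} - \theta ^2 - \ave{\theta ^2})A(x)$, which is the stated form; in the force term $\oc(\ave{\theta}\,q - e\ave{\theta ^2}A)\wedge b$ becomes $\oc\ave{\theta}\,p\wedge b + \oc e(\tes{}\ave{\theta} - \ave{\theta ^2})A(x)\wedge b$. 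I do not expect any genuine difficulty here, this being the exact analogue of Lemma \ref{L1}; the only point demanding care is the bookkeeping with the non-symmetric Jacobian $\partial _x A$ (via the identity $v\cdot{}^{t}M\,w = (Mv)\cdot w$) and the verification that $A$, $b$, $E$ and $\partial _x A$, depending on $x$ alone, are indeed invariant along \eqref{EquNew76} and hence factor out of $\ave{\cdot}_1$.
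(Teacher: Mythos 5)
Your proposal is correct and follows essentially the same route as the paper's proof: the chain rule $\nabla _x f = \nabla _x g + e\,\tes{}\;^t\partial _x A\,\nabla _q g$, $\nabla _p f = \nabla _q g$, pulling the gradients of $g$ (and the $x$-dependent quantities $E$, $b$, $A$, $\partial _x A$) out of $\ave{\cdot}_1$ since they are constant along the flow \eqref{EquNew76}, and the explicit averages $\ave{p}_1 = q - e\ave{\theta}A$, $\ave{\theta p}_1 = \ave{\theta}q - e\ave{\theta ^2}A$. The concluding algebraic rewriting in terms of $f$, including the recombination into $e(2\theta\ave{\theta}-\theta^2-\ave{\theta^2})A$ and $\oc e(\theta\ave{\theta}-\ave{\theta^2})A\wedge b$, is exactly what the paper leaves implicit and you carry it out correctly.
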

\begin{proof}
We have
\[
\nabla _x f = \nabla _x g + e \;\tes{} \;^t \partial _x A \nabla _q g,\;\;\nabla _p f = \nabla _q g.
\]
Since $\nabla _{(x,q)} g$ are constant along the flow \eqref{EquNew76} we can write
\[
\ave{\frac{p}{m}\cdot \nabla _x f }_1  = \frac{\ave{p}_1}{m}\cdot \nabla _x g + \frac{e}{m} \ave{p \;\theta}_1 \cdot \;^t \partial _x A \nabla _q g.
\]
It is easily seen by the definition of the average operator $\ave{\cdot}_1$ that 
\[
\ave{p}_1 = p + e \;\tes{} A(x) - e \ave{\theta} A(x)
\]
and
\[
\ave{p \;\theta}_1  = ( p + e \;\tes{} A(x) ) \ave{\theta} - e \ave{\theta ^2} A(x)
\]
implying that 
\[
\ave{\frac{p}{m}\cdot \nabla _x f }_1  =  \frac{q- e\ave{\theta}A(x)}{m}   \cdot \nabla _x g + \frac{e}{m} \partial _x A  ( \ave{\theta} q - e \ave{\theta ^2} A(x)) \cdot \nabla _q g. 
\]
Similarly one gets
\[
\ave{(eE(x) + \oc \theta \;p \wedge b ) \cdot \nabla _p f }_1  =  \left [eE  + \oc ( \ave{\theta} q \wedge b - e \ave{\theta ^2} A(x) \wedge b ) \right ] \cdot \nabla _q g.
\]
\end{proof}
Combining the previous computations and using the identities
\[
\partial _x A \;A + A \wedge \Curl A = \;^t \partial _x A\;A,\;\;\partial _x A \;q + q \wedge \Curl A = \;^t \partial _x A \;q
\]
yield the transport equation in the space phase $(x,q)$
\begin{equation}
\label{EquNew77} \partial _t \gz + \frac{q - e \;\ave{\theta}A(x)}{m} \cdot \nabla _x \gz + \left ( eE + \frac{e}{m} \ave{\theta} \;^t \partial _x A \;q - \frac{e^2}{m} \ave{\theta ^2} \;^t \partial _x A \;A \right ) \cdot \nabla _q \gz = 0
\end{equation}
where $\fz (t,s,x,p) = \gz (t,x,q = p + e \tes{}A(x))$, since ${\cal T}_1 \fz (t) = 0$ for any $t \in \R_+$. The transport equation in the phase space $(x,p)$ becomes
\begin{eqnarray}
\label{EquNew78} 
\partial _t \fz & + & \frac{p + e \;(\tes{} - \ave{\theta})A(x)}{m} \cdot \nabla _x \fz + \left [ eE + \frac{e}{m} ( \ave{\theta} \;^t \partial _x A  - \theta \partial _x A ) p \right. \nonumber \\
&+& \left. \frac{e^2}{m} ( \theta \ave{\theta} - \ave{\theta ^2} ) \;^t \partial _x A A  + \frac{e^2}{m} ( \ave{\theta} - \theta) \;\theta \;\partial _x A A \right ]\cdot \nabla _p \fz = 0.
\end{eqnarray}
We supplement these transport equations by the initial conditions
\begin{equation}
\label{EquNew79} \gz (0,x,q) = \fin (x, q - e\;\theta (0)A(x))
\end{equation}
\begin{equation}
\label{EquNew80} \fz (0,s,x,p) = \fin (x, p+ e\;(\tes{} - \theta (0))A(x)).
\end{equation}
Following the lines in Sections \ref{LimMod}, \ref{AsyBeh} we can prove weak and strong convergence results, which justify the Hilbert expansion in \eqref{Equ14}. In the weak framework we obtain
\begin{thm}
Assume that $E \in L^1 _{\mathrm{loc}} ( \R_+; L^\infty (\R^3))$, $A \in L^1 _{\mathrm{loc}} ( \R_+; W^{1,\infty}(\R ^3)) ^3$, $\fin \in L^2 (\R ^3 \times \R ^3)$. For any $ \eps >0$ let $\fe \in L^\infty (\R_+; L^2 (\R ^3 \times \R ^3))$ be a weak solution of \eqref{EquNew73}, \eqref{EquNew84}. Then there is a sequence $\eps _n \searrow 0$ such that $(f^{\eps _n})_n$ two-scale converges towards a weak solution of \eqref{EquNew78}, \eqref{EquNew80}.
\end{thm}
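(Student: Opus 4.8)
The plan is to prove this two-scale convergence result by following, almost verbatim, the proof of Theorem~\ref{MainResultWeak}, with the operator ${\cal T}$ replaced by ${\cal T}_1$ and the average $\ave{\cdot}$ by $\ave{\cdot}_1$. As a preliminary one checks that the whole abstract apparatus of Sections~\ref{AveOpe} and~\ref{CommAveDer} transfers to ${\cal T}_1$: the characteristic flow \eqref{EquNew76} is $T$ periodic (since $\theta$ is), divergence free in $(s,p)$, and $\{x,\;p+e\tes{}A(x)\}$ is a complete family of functionally independent invariants; hence $\ave{\cdot}_1$ is the orthogonal projection onto $\ker{\cal T}_1$, one has $\ran{\cal T}_1=\ker\ave{\cdot}_1$, and the Poincar\'e inequality $\|u\|\le T\|{\cal T}_1 u\|$ holds on $\ker\ave{\cdot}_1$.

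First I would establish the a priori bound: the vector field in \eqref{EquNew73} is divergence free with respect to $(x,p)$, so multiplying by $\fe$ and integrating over $\R^3\times\R^3$ gives $\|\fe(t,\cdot,\cdot)\|_{\ltxp{}}=\|\fin\|_{\ltxp{}}$ for all $t$; thus $(\fe)_\eps$ is bounded in $L^\infty(\R_+;\ltxp{})$, hence in $L^2([0,I];\ltxp{})$ for every $I$. By Proposition~\ref{CompactTwoScales} together with a diagonal extraction over $I\to+\infty$ there is a sequence $\eps_n\searrow 0$ and $f^0\in L^2_{\mathrm{loc}}(\R_+;\ltpltxp{})$ such that $(f^{\eps_n})_n$ two-scale converges towards $f^0$ on every bounded interval. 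Next, inserting the test function $\eta(t)\varphi(\s,x,p)$ with $\eta\in C^1_c(\R_+)$ and $\varphi\in C^1_c(\R/T\Z\times\R^3\times\R^3)$ into the weak formulation of \eqref{EquNew73}, multiplying by $\eps$ and letting $\eps\searrow 0$ (only the contribution of the $1/\eps$ term, namely $\frac1\eps({\cal T}_1\varphi)(\s,x,p)$, survives, the rest being ${\cal O}(\eps)$), I would obtain $\frac1T\intsxp{f^0\,{\cal T}_1\varphi}=0$ for a.e. $t$ and all such $\varphi$, i.e. ${\cal T}_1 f^0(t,\cdot,\cdot,\cdot)=0$; hence $f^0(t,s,x,p)=\gz\!\left(t,x,\,p+e\tes{}A(x)\right)$ for some $\gz$.

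Then I would pass to the limit in the equation itself, using now $\varphi\in C^1_c(\R/T\Z\times\R^3\times\R^3)\cap\ker{\cal T}_1$, say $\varphi(s,x,p)=\psi\!\left(x,\,p+e\tes{}A(x)\right)$ with $\psi\in C^1_c$; for such $\varphi$ the $1/\eps$ terms of the weak formulation vanish identically, and two-scale convergence along $\eps_n$ turns the remaining identity into a weak transport identity for $f^0$. The initial-datum term is handled by writing $\varphi(0,x,p)=\varphi\!\left(s,x,\,p+e(\theta(0)-\tes{})A(x)\right)$ (invariance along \eqref{EquNew76}) and then performing the change of variable $p\mapsto p+e(\tes{}-\theta(0))A(x)$, which reproduces exactly the datum $\fin\!\left(x,\,p+e(\tes{}-\theta(0))A(x)\right)$ of \eqref{EquNew80}. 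The transport term is handled by using $f^0(t)\in\ker{\cal T}_1$ and $\ave{\cdot}_1=\mathrm{Proj}_{\ker{\cal T}_1}$, so that $\intsxp{f^0\,\Phi}=\intsxp{f^0\,\ave{\Phi}_1}$ for $\Phi=\frac pm\cdot\nabla_x\varphi+\left(eE+\oc\tes{}p\wedge b\right)\cdot\nabla_p\varphi$; the average $\ave{\Phi}_1$ is computed by Lemma~\ref{DerXP3D} applied to $f=\varphi$, and it is precisely the transport operator of \eqref{EquNew78} acting on $\varphi$. This identifies $f^0$ as a weak solution of \eqref{EquNew78}, \eqref{EquNew80}, equivalently $\gz$ as a weak solution of \eqref{EquNew77}, \eqref{EquNew79}.

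I expect no serious obstacle, only bookkeeping. The two genuinely delicate points are: verifying once and for all that the abstract framework of Sections~\ref{AveOpe}--\ref{CommAveDer} transfers to ${\cal T}_1$ (this relies only on the $T$ periodicity and divergence-freeness of the flow \eqref{EquNew76}, so the analogues of Propositions~\ref{P1}--\ref{P3} and \ref{Invariance}--\ref{RegPropag} hold unchanged); and keeping the signs straight in the two changes of variable $q=p+e\tes{}A(x)$ — once in the initial-condition term and once, through Lemma~\ref{DerXP3D}, in the limiting transport operator, where the geometric identities ${}^t\partial_x A\,A=\partial_x A\,A+A\wedge\Curl A$ and ${}^t\partial_x A\,q=\partial_x A\,q+q\wedge\Curl A$ must be invoked to recast the limit in the form \eqref{EquNew78}. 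The regularity $A\in W^{1,\infty}$ is used precisely so that $\partial_x A$ appearing in Lemma~\ref{DerXP3D} is meaningful and the coefficients of \eqref{EquNew78} are $L^\infty$ on the (compact in $p$) support of the test functions, while $E\in L^1_{\mathrm{loc}}(\R_+;L^\infty)$ and $Bb=\Curl A\in L^\infty$ are enough for all the pairings against $\nabla_{(x,p)}\varphi$.
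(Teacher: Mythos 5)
Your proposal is correct and follows exactly the route the paper intends: the paper itself only sketches this theorem by setting up ${\cal T}_1$, its characteristics \eqref{EquNew76}, the average $\ave{\cdot}_1$ and Lemma \ref{DerXP3D}, and then states that the result follows along the lines of the proof of Theorem \ref{MainResultWeak} — which is precisely the adaptation you carry out (multiplying the weak formulation by $\eps$ to get the constraint ${\cal T}_1 f^0=0$, then testing with $\varphi\in\ker{\cal T}_1$, using $\ave{\cdot}_1=\mathrm{Proj}_{\ker{\cal T}_1}$ and Lemma \ref{DerXP3D} for the transport term, and the flow invariance plus change of variables for the initial datum). No gaps; your remarks on the transfer of the abstract framework and on the identities involving ${}^t\partial_x A$ match the paper's treatment.
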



\end{document}